\definecolor{purple}{rgb}{0.3,0.0,.4}
\DeclareSymbolFont{bbold}{U}{bbold}{m}{n}
\DeclareSymbolFontAlphabet{\mathbbold}{bbold}
\theoremstyle{theorem}
\newtheorem{theorem}{Theorem}[section]
\newtheorem{lemma}[theorem]{Lemma}
\newtheorem{prop}[theorem]{Proposition}
\theoremstyle{definition}
\newtheorem{definition}[theorem]{Definition}
\theoremstyle{remark}
\numberwithin{theorem}{section}
\numberwithin{equation}{section}
\DeclareMathOperator{\tr}{tr}
\DeclareMathOperator{\diag}{diag}
\DeclareMathOperator{\card}{card}
\newcommand{\twonorm}[1]{\|#1\|_2}
\newcommand{\comb}[2]{{#1 \choose #2}}
\newcommand{\psd}[1]{\mathbb{S}^{#1}_+}
\newcommand{\ep}{\text{embedding property}\xspace}
\newcommand{\cone}[1]{\mathsf{K}^{#1}}
\newcommand{\cp}[1]{\mathsf{COP}^{#1}}
\newcommand{\cpp}[1]{\mathsf{CP}^{#1}}
\newcommand{\SDD}[1]{\mathsf{SDD}^{#1}}
\newcommand{\Trn}[1]{\tau_{#1}}
\newcommand{\Aug}[1]{\varepsilon_{#1}}
\newcommand{\DD}[1]{\mathsf{DD}_{#1}}
\newcommand{\SOC}{\mathsf{SOC}}
\newcommand{\nn}[1]{\mathcal{N}^{#1}_+}
\newcommand{\Ho}[1]{\mathcal{HO}^{#1}}
\DeclareMathOperator{\interior}{int}
\newcommand{\SOS}{\mathsf{SOS}}
\newcommand{\ddsos}{\mathsf{DDSOS}}
\newcommand{\optx}{X^{\star}}
\newcommand{\opty}{y^\star}
\newcommand{\optz}{Z^{\star}}
\newcommand{\tp }{{\scriptscriptstyle\mathsf{T}}}
\newcommand{\marka}[1]{\overset{(a)}{#1}}
\begin{document}
	\title{Higher-Order Cone Programming}
	\author{Lijun Ding}
	\address{Department of Statistics, University of Chicago, Chicago, IL 60637-1514.}
	\email{ld446@cornell.edu}
	\author{Lek-Heng Lim}
	\address{Computational and Applied Mathematics Initiative, Department of Statistics,
		University of Chicago, Chicago, IL 60637-1514.}
	\email{lekheng@galton.uchicago.edu}
\begin{abstract}
We introduce a conic embedding condition that gives a hierarchy of cones and cone programs. This condition is satisfied by a large number of convex cones including the cone of copositive matrices, the cone of completely positive matrices, and all symmetric cones. We discuss properties of the intermediate cones and conic programs in the hierarchy. In particular, we demonstrate how this embedding condition gives rise to a family of cone programs that interpolates between LP, SOCP, and SDP. This family of $k$th order cones may be realized either as cones of $n$-by-$n$ symmetric matrices or as cones of $n$-variate even degree polynomials. The cases $k = 1, 2, n$ then correspond to LP, SOCP, SDP; or, in the language of polynomial optimization, to DSOS, SDSOS, SOS.
\end{abstract}
	\maketitle

\section{Introduction}

Given a convex proper cone we will show how to construct a hierarchy of cones with associated cone programs, provided that a certain embedding property (defined below) is satisfied. This generalizes the work of Ahmadi and Majumdar in \cite{ahmadi2017dsos} where they constructed a sequence of polynomial conic programs, particularly the DSOS and SDSOS conic programs, to approximate the SOS cone program. We will show how such a construction can be carried out for a large number of conic programming problems including:
\begin{enumerate}[\upshape (i)]
\item the nonnegative orthant;
\item\label{it:soc} the second-order cone;
\item\label{it:psd} the cone of symmetric positive semidefinite matrices;
\item the cone of copositive matrices;
\item the cone of completely positive matrices;
\item all symmetric cones, i.e., any cone is constructed out of a direct sum of \eqref{it:soc}, \eqref{it:psd}, or the cones of Hermitian positive semidefinite matrices over $\mathbb{C}$, $\mathbb{H}$, and $\mathbb{O}$ (quaternions and octonions);
\item any norm cones where the norm satisfies a consistency condition, which includes $l^p$-norms, Schatten and Ky Fan norms, operator $(p,q)$-norms, etc.
\end{enumerate}
For each of these cones, we can build a sequence of intermediate cones and conic programs in the hierarchy. In the case of \eqref{it:soc}, we obtain a family of cone programs that interpolates between LP, SOCP, and SDP. This family of $k$th order cones may be realized either as cones of $n$-by-$n$ symmetric matrices or as cones of $n$-variate even degree polynomials. The cases $k = 1, 2, n$ then correspond to LP, SOCP, SDP; or, in the language of polynomial optimization, to DSOS, SDSOS, SOS.

\subsection*{Notations} Throughout this article, we write $\mathbb{N} \coloneqq \{1,2,3,\ldots\}$ for the set of positive integers. The skew field of quaternions will be denoted as $\mathbb{H}$ and the division ring of octonions as $\mathbb{O}$.  We will slightly abuse terminologies and refer to $\mathbb{R}$, $\mathbb{C}$, $\mathbb{H}$, $\mathbb{O}$ as `fields.' We will write $\mathbb{S}^{d}_\mathbb{F}$ for the $\mathbb{F}$-vector space (or, strictly speaking, $\mathbb{F}$-module when $\mathbb{F}$ is not a field) of $d \times d$ Hermitian matrices over $\mathbb{F} = \mathbb{R},\mathbb{C}, \mathbb{H}, \mathbb{O}$. When the choice of $\mathbb{F}$ is implicit or immaterial, we will just write  $\mathbb{S}^{d}$. For a vector $x\in \mathbb{F}^d$, the notation $x\geq 0$ means each component of $x$ is greater or equal to $0$.

We write $[d] \coloneqq \{1,\dots,d\}$ for any $d\in \mathbb{N}$. We denote the 
set of all increasing sequences of length $k$ in $[d]$ as $\comb{[d]}{k}=\{(i_1,\dots,i_k)\mid  1\leq \,i_1<\dots <i_k\leq d\}$. 

  For a matrix $A = [a_{ij}]_{ij}\in\mathbb{S}^d$, we write $\tr(A)= \sum_{i=1}^d a_{ii}$. The inner product $\langle \cdot ,\cdot \rangle :\mathbb{S}^d\times \mathbb{S}^d \rightarrow \mathbb{R}$ we use in this article is the standard trace inner product $\langle A, B\rangle = \tr(AB)$. The topology is then defined via the distance metric induced by the trace inner product. We write the interior of a set $S \subset \mathbb{F}^{d}$ as $\interior(S)$. 
\section{Conic embedding property}

To standardize our terminologies, the cones in this article will all be represented as cones of symmetric matrices over some field $\mathbb{F}$; although we will see that this is hardly a limitation ---  conic programs involving cones in other common $\mathbb{F}$-vector spaces, e.g., of vectors in $\mathbb{F}^n$ or polynomials in $\mathbb{F}[x]$ or $\mathbb{F}$-valued functions on some set, can often be transformed to a symmetric matrix setting.

We start by defining two linear maps. Let $k \le d$ be positive integers. For $\{i_1,\dots,i_k\} \in \comb{[d]}{k}$, i.e., $1\leq i_1<\dots<i_k\leq d$, the \emph{truncation operator} is the projection $\Trn{i_1 \cdots i_k}^d: \mathbb{S}^{d} \rightarrow \mathbb{S}^{k}$ defined by
\[
 \Trn{i_1 \cdots i_k}^d(Z) \coloneqq\begin{bmatrix}
	z_{i_1i_1} & \dots & z_{i_1i_k}\\
	z_{i_2i_1} & \dots & z_{i_2i_k}\\
	& \ddots & \\
	z_{i_k i_1} & \dots &z_{i_ki_k}
	\end{bmatrix}
\]
for any $Z\in \mathbb{S}^{d}$; the \emph{lift operator}  is the injection $\Aug{i_1 \cdots i_k}^d : \mathbb{S}^{k}\rightarrow \mathbb{S}^{d}$ defined by
\[
[\Aug{i_1 \cdots i_k}^d(X)]_{i_p i_q} =
\begin{cases}
 x_{pq} & p,q\in \{1,\dots,k\} ,\\
0 & \text{otherwise}.
\end{cases}
\]
for any $X\in \mathbb{S}^{k}$. In other words, the truncation operator takes a $d \times d$ matrix to its $k \times k$ submatrix; whereas the lifting operator takes a $k \times k$ matrix and embed it as a $d \times d$ matrix by filling-in the extra entries as zeros. Clearly for a fixed index set $\{i_1,\dots,i_k\}$,  $\Trn{i_1 \cdots i_k}^d$ is a left inverse of $\Aug{i_1 \cdots i_k}^d$, i.e.,
\[
\Trn{i_1 \cdots i_k}^d \circ \Aug{i_1 \cdots i_k}^d = \operatorname{id}_{\mathbb{S}^{k}}.
\]
We now state our \ep.   
	\begin{definition}
Let $\mathbb{F} = \mathbb{R}$, $\mathbb{C}$, $\mathbb{H}$, or $\mathbb{O}$ and $\mathbb{S}^k =\mathbb{S}^k_\mathbb{F}$. Let $k_0 \in  \mathbb{N}$ and $\{\cone{k} : k\in \mathbb{N},\; k \ge k_0\}$ be a sequence of convex proper cones where $\cone{k} \subseteq \mathbb{S}^k$ for each $k \ge k_0$. We say that the sequence $\{\cone{k} \}_{k=k_0}^\infty$ satisfies the  \emph{\ep} with \emph{index map}
\begin{equation}\label{eq:index}
I:\{(d,k)\in \mathbb{N}\times \mathbb{N}\mid d\geq k\} \rightarrow \bigcup_{k_0\le k \le d}\comb{[d]}{k}
\end{equation}
if for any $d\geq k\geq k_0$,  $(i_1,\dots,i_k)\in I(d,k)$, we have
\[
	\Trn{i_1i_2 \cdots i_k}^d (Z) \in \cone{k} \qquad \text{and} \qquad
	\Aug{i_1i_2 \cdots i_k}^d (X) \in \cone{d}
\]
for all $Z\in \cone{d}$ and  $ X\in \cone{k}$.
	\end{definition}
We caution our reader that the ``higher-order cones'' in the title of this article do not refer to   $\{\cone{k}\}_{k=k_0}^\infty$ but will be constructed out of these cones.
In several instances, the index map is given simply by
\[
I(d,k) = \comb{[d]}{k},
\]
and in which case we will drop any reference to the index map and just say that $\{\cone{k}\}_{k=k_0}^\infty$ satisfies the \ep. If in addition $k_0=1$, we will say that $\{\cone{k}\}_{k=1}^\infty$ satisfies the \ep \emph{thoroughly}.  

The embedding property simply says for a $d \times d$ matrix $Z\in \cone{d}$, its $k \times k$ principle  submatrix belongs to the lower dimension cone $\cone{k}$; conversely, for a  $k \times k$ matrix $X\in \cone{k}$, embedding as it a principle submatrix of a $d \times d$  matrix with all other entries  set to be zero gives a matrix in $\cone{d}$.

A simple example is the cone of symmetric diagonally dominant matrices with nonnegative diagonals,
\[
\DD{d} \coloneqq  \Bigl\{  M \in \mathbb{S}^d : m_{ii}\geq \sum\nolimits_{j\ne i} |m_{ij}|, \; i=1,\dots,d \Bigr\},
\]
where it is easy to see that $\{ \DD{k}\}_{k=1}^\infty$ satisfies the \ep thoroughly. We will see many more examples of cones satisfying the \ep over the next few sections. 

We may now define the higher order cones in the title of this article. They are obtained by lifting cones in lower dimension to higher dimension. The benefit is that though the cones defined are in high dimension, they are expressible by cones in lower dimension and property of cones in lower dimension might be utilized. These higher cone might be served as an inner approximation of cones in high dimension. 

As usual, in the following we let $\mathbb{F} = \mathbb{R}$, $\mathbb{C}$, $\mathbb{H}$, or $\mathbb{O}$ and write $\mathbb{S}^d = \mathbb{S}^d_\mathbb{F}$.
\begin{definition}
Let $\{\cone{k}\}_{k=k_0}^\infty$ be a sequence of cones that satisfies the \ep with index map $I$. The \emph{$k$th order cone} with index set $J\subseteq I(d,k)$ induced by $\cone{k}$ is
\[
 \cone{d}_{k}(J) \coloneqq \Bigl\{ M \in  \mathbb{S}^d : M=\sum\nolimits_{(i_1,\dots,i_k)\in J} \Aug{i_1 \cdots i_k}^d (M_{i_1 \cdots i_k}),\; M_{i_1 \cdots i_k}\in \cone{k} \Bigr\}.
\]
If $J = I(d,k)$, we will just write  $\cone{d}_k$ for $\cone{d}_{k}(J)$.
\end{definition}


We will establish some basic properties of  higher order cones. 
\begin{prop}\label{proposition: koc} Let $\{\cone{d}\}_{k=k_0}^\infty$ satisfy the \ep with index mapping $I$. Then the following properties hold:
	\begin{enumerate}[\upshape (i)]
		\item \emph{Nested cones:} Suppose a sequence of index sets $\{J_k\}_{k=k_0}^d$, $J_k \subset \comb{[d]}{k}$ satisfies that for any  $k\geq k_0$ and any $s\in J_k$, there is an $s' \in J_{k+1}$ such that all the components of $s$ appears in $s'$ (This property is satisfied by $\comb{[d]}{k}$). Then we have 
		\[ \cone{d}_{k_0}(J_{k_0}) \subseteq \cone{d}_{k_0+1}(J_{k_0+1})\subseteq \dots \subseteq \cone{d}_{d}(J_{d}).\]
		In particular, if $\{I(d,k)\}_{k=k_0}^d$ is such sequence of index sets, then for every $d\geq k_0$, we have 
		\[ \cone{d}_{k_0}\subseteq \cone{d}_{k_0+1}\subseteq \dots \subseteq \cone{d}_{d}.\]
		\item \emph{Dual cones:} the dual cone of $\cone{d}_k(J)$ is 
		\[(\cone{d}_k(J))^* = \{A \in \mathbb{S}^d  : A\,\text{is Hermitian and}\,\text{for all}\, (i_1,\dots,i_k)\in J,\,\Trn{i_1 \cdots i_k} (A) \in (\cone{k})^*\} .\]
		\item \emph{Membership:} If $I(d,k) =\comb{d}{k}$ for every $d$, we have $X_1\in \cone{t}_k, X_2\in \cone{s}_k \iff \diag(X_1,X_2)\in \cone{t+s}_k$, ditto for the dual cones of $\cone{d}_k$.
		\item \emph{Inheritance}: It the \ep is satisfied throughly by $\{\cone{k}\}_{k=1}^\infty$, then for each $k\geq 1$, the sequence of cones $\{\cone{l}_{k}\}_{l=k}^\infty$ satisfies the \ep.
	\end{enumerate} 
\end{prop}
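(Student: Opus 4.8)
The proposition packages four independent assertions, so I would treat them separately; the common engine is three elementary identities about the operators $\Trn{}$ and $\Aug{}$, each verified by tracking where an individual matrix entry is sent. First, $\Trn{i_1\cdots i_k}^d$ is the adjoint of $\Aug{i_1\cdots i_k}^d$ for the trace inner product: $\langle A,\Aug{i_1\cdots i_k}^d(X)\rangle=\langle\Trn{i_1\cdots i_k}^d(A),X\rangle$. Second, a lift factors through any larger index set: if $\{i_1,\dots,i_k\}\subseteq\{j_1,\dots,j_l\}$ and $p_1<\dots<p_k$ record the positions of the $i$'s inside $(j_1,\dots,j_l)$, then $\Aug{i_1\cdots i_k}^d=\Aug{j_1\cdots j_l}^d\circ\Aug{p_1\cdots p_k}^l$. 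Third, truncating a lift returns a lift of a principal submatrix of the core: $\Trn{j_1\cdots j_l}^d\bigl(\Aug{i_1\cdots i_k}^d(X)\bigr)$ is supported on the indices $(j_1,\dots,j_l)$ shares with $(i_1,\dots,i_k)$ and, read in the coordinates of $(j_1,\dots,j_l)$, equals the lift of the corresponding principal submatrix of $X$. Granting these, (ii) is immediate: since $\cone{d}_k(J)$ is by definition the set of finite sums $\sum_{s\in J}\Aug{s}^d(X_s)$, $X_s\in\cone{k}$, a matrix $A$ lies in $(\cone{d}_k(J))^*$ iff $\langle A,\Aug{s}^d(X)\rangle\ge 0$ for all $s\in J$ and $X\in\cone{k}$, which by the adjoint identity reads $\Trn{s}^d(A)\in(\cone{k})^*$ for all $s\in J$.

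For (i) it suffices to prove the single inclusion $\cone{d}_{k}(J_{k})\subseteq\cone{d}_{k+1}(J_{k+1})$ and then chain it. Take $M=\sum_{s\in J_k}\Aug{s}^d(M_s)$ with $M_s\in\cone{k}$; for each $s\in J_k$ pick by hypothesis an $s'\in J_{k+1}$ containing all components of $s$, let $p\in\comb{[k+1]}{k}$ record the positions of $s$ in $s'$, and use the \ep to get $\Aug{p}^{k+1}(M_s)\in\cone{k+1}$ and the factorization identity to write $\Aug{s}^d(M_s)=\Aug{s'}^d\bigl(\Aug{p}^{k+1}(M_s)\bigr)$. Summing over $s$ exhibits $M$ as an element of $\cone{d}_{k+1}(J_{k+1})$. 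The ``in particular'' clause is the special case $J_k=I(d,k)$.

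Claims (iii) and (iv) share one mechanism, which I would isolate first: assuming the index maps in play are the full ones $I(d,k)=\comb{[d]}{k}$, then for any $N\in\cone{d}_k$ and any $(j_1,\dots,j_l)\in\comb{[d]}{l}$ with $l\ge k$ we have $\Trn{j_1\cdots j_l}^d(N)\in\cone{l}_k$, and conversely $\Aug{j_1\cdots j_l}^d$ maps $\cone{l}_k$ into $\cone{d}_k$. The lift half follows at once from the factorization identity: $N=\sum_v\Aug{v}^l(E_v)$, $E_v\in\cone{k}$, gives $\Aug{j_1\cdots j_l}^d(N)=\sum_v\Aug{w_v}^d(E_v)$ with $w_v=(j_{v_1},\dots,j_{v_k})\in\comb{[d]}{k}$. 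For the truncation half, write $N=\sum_u\Aug{u}^d(C_u)$, $C_u\in\cone{k}$; by the third identity each $\Trn{j_1\cdots j_l}^d\Aug{u}^d(C_u)$ equals $\Aug{p}^l(D_u)$ where $D_u$ is a $k'\times k'$ principal submatrix of $C_u$ with $k'=\lvert u\cap\{j_1,\dots,j_l\}\rvert\le k$, so $D_u\in\cone{k'}$ by the \ep; choosing any $v\in\comb{[l]}{k}$ with $\{p_1,\dots,p_{k'}\}\subseteq v$ and applying the factorization identity and the \ep once more yields $\Aug{p}^l(D_u)=\Aug{v}^l\bigl(\Aug{r}^k(D_u)\bigr)$ with $\Aug{r}^k(D_u)\in\cone{k}$, so every summand, hence $\Trn{j_1\cdots j_l}^d(N)$, lies in $\cone{l}_k$. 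Granting this, (iv) is exactly the assertion that $\{\cone{l}_k\}_{l=k}^\infty$ satisfies the \ep with index map $(d,l)\mapsto\comb{[d]}{l}$; that each $\cone{l}_k$ is a convex cone is immediate from the definition, and by (i) it sits inside $\cone{l}_l=\cone{l}$, a finite sum of closed subcones of a pointed closed cone, hence is itself closed and pointed.

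Finally (iii). The forward direction is a reindexing: from $X_1=\sum_{u\in\comb{[t]}{k}}\Aug{u}^t(A_u)$ and $X_2=\sum_{v\in\comb{[s]}{k}}\Aug{v}^s(B_v)$ one gets $\diag(X_1,X_2)=\sum_u\Aug{u}^{t+s}(A_u)+\sum_v\Aug{v'}^{t+s}(B_v)$, where $v'$ is $v$ with every index raised by $t$ and all tuples now lie in $\comb{[t+s]}{k}$, so $\diag(X_1,X_2)\in\cone{t+s}_k$. The backward direction is the main obstacle: a given decomposition $\diag(X_1,X_2)=\sum_{w\in\comb{[t+s]}{k}}\Aug{w}^{t+s}(C_w)$ may use tuples $w$ straddling the two diagonal blocks, and such summands carry entries in the off-diagonal blocks, so one cannot simply delete or split them. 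The remedy is to apply $\Trn{1\cdots t}^{t+s}$ to both sides --- the left side becomes $X_1$ --- and then invoke the mechanism above with $l=t$ to conclude $X_1\in\cone{t}_k$; symmetrically, $\Trn{t+1\cdots t+s}^{t+s}$ yields $X_2\in\cone{s}_k$. (This uses the \ep in dimensions $\le k$ as well, which is available under the standing hypotheses.) The dual-cone version of (iii) follows by the same pattern together with the short fact --- a direct computation from the adjoint identity, by the same dualization argument as in (ii) --- that $\{(\cone{k})^*\}$ inherits the \ep from $\{\cone{k}\}$.
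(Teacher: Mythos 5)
Your proof is correct and follows essentially the same route as the paper's: the same summand-by-summand factorization of lifts for (i), the same dualization of a sum of lifted cones for (ii), and the same block-truncation argument for (iii) and (iv). If anything, you are more careful than the paper at the one delicate point --- the summands whose index tuples straddle the two diagonal blocks in (iii)/(iv), whose truncations land in $\cone{k'}$ for some $k'<k$ and must be re-lifted into $\cone{k}$ before they can appear in a valid decomposition --- a step the paper dismisses with ``due to the embedding property'' and which, as you note, silently uses the embedding property in dimensions below $k$.
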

\begin{proof}
\begin{enumerate}[\upshape (i)]
	\item  Consider $k_0+i$ and $k_0+i+1$ where $0\leq i\leq d-k_0-1$. The cones $\cone{d}_{k_0+i}(J_{k_0+i})$ and $\cone{d}_{k_0+i+1}(J_{k_0+i+1})$ can be expressed as 
	\begin{equation}\label{eq: Kk0decomposition}
	\begin{aligned} 
	 \cone{d}_{k_0+i}(J_{k_0+i}) = \sum_{(j_1,\dots,j_{k_0+i})\in J_{k_0+i}}\Aug{j_1\dots j_{k_0+i}}^d (\cone{k_0+i})
	 \end{aligned}
	 \end{equation}
	and 
		\begin{equation}\label{eq: Kk0+1decomposition}
		\begin{aligned} 
		 \cone{d}_{k_0+i+1}(J_{k_0+i+1}) = \sum_{(j_1,\dots,j_{k_0+i},j_{k_0+i+1})\in J_{k_0+i+1} }\Aug{j_1\dots j_{k_0+i} j_{k_0+i+1}}^d (\cone{k_0+i+1})
		\end{aligned}
	\end{equation} 
	By the assumption on $\{J_k\}_{k=k_0}^d$,  we know for each $(j_1,\dots,j_{k_0+i})\in J_{k_0+i}$, there is some $j$ such that $\{j_1,\dots,j_{k_0+i},j\}$ after ordering is in $ J_{k_0+i+1}$. Without loss of generality, we may assume $j$ is the largest among $\{j_1,\dots,j_{k_0+i},j\}$. Thus 
	\begin{align*}\Aug{j_1\dots j_{k_0+i}}^d (\cone{k_0+i}) & =\Aug{j_1\dots j_{k_0+i}j}^d (\begin{bmatrix}
	\cone{k_0+i} &  \\  & 0 
	\end{bmatrix})  \\ 
 & \marka{\subset}	\Aug{j_1\dots j_{k_0+i}j}^d(\cone{k_0+i+1}),
	\end{align*}
	where (a) is because of $\begin{bmatrix}
	\cone{k_0+i} &  \\  & 0 
	\end{bmatrix}\subseteq \cone{k_0+i+1}$ using the \ep. Thus we see each summand in the decomposition \eqref{eq: Kk0decomposition} is a subset of a summand in the decomposition of \eqref{eq: Kk0+1decomposition}. Using the conic property that $a,b \in \cone{k_0+i+1} \implies a+b \in \cone{k_0+i+1}$, we see indeed 
	\[ \cone{d}_{k_0+i} \subseteq \cone{d}_{k_0+i}.\]
	Since $i$ is arbitrary, we see we have the cones are nested.
	\item  We use the following simple fact \cite[Corollary 16.3.2]{rockafellar1970convex} that for convex cone $K_1,K_2$, \[(K_1+K_2)^* = K_1^*\cap K_2^*.\]
	By definition, $\cone{d}_{k}(J)$ can be expressed as 
	\[ \cone{d}_k(J) = \sum_{(i_1,\dots, i_k)\in J}\Aug{i_1 \cdots i_k}^d(\cone{k}),\]
	where each $\Aug{i_1 \cdots i_k}^d(\cone{k})$ is a convex cone  in $\mathbb{S}^d$. The dual cone of  $\Aug{i_1 \cdots i_k}^d(\cone{k})$	is \[( \Aug{i_1 \cdots i_k}^d(\cone{k}))^* = \{A\in \mathbb{S}^d: \Trn{i_1 \cdots i_k}(A)\in (\cone{k})^*\}.\]
	Applying previous fact, we get the characterization of the dual cone. 
	\item We first show that  $X_1\in \cone{t}_k, X_2\in \cone{s}_k \implies \diag(X_1,X_2)\in \cone{s+t}_k$. We know there are $M_{i_1 \cdots i_k}$,$Y_{j_1\dots j_k}\in \cone{k}$ such that 
	\[ X_1 = \sum_{(i_1,\dots,i_k)\in\comb{[t]}{k}}\Aug{i_1 \cdots i_k}^s(M_{i_1 \cdots i_k}),\]
and 
	\[ X_2 = \sum_{(j_1,\dots,j_k)\in \comb{[s]}{k}}\Aug{j_1\dots j_k}^t(Y_{j_1\dots j_k}).\] Thus 
	\begin{align*}
	\diag(X_1,X_2) =&\sum_{(i_1,\dots,i_k)\in\comb{[t]}{k}}\diag(\Aug{i_1 \cdots i_k}^s(M_{i_1 \cdots i_k}),0) \\&+ \sum_{(j_1,\dots,j_k)\in \comb{[s]}{k}}\diag(0,\Aug{j_1\dots j_k}^t(Y_{j_1\dots j_k}))\\
	=&   \sum_{(i_1,\dots,i_k)\in\comb{[t]}{k}}\Aug{i_1 \cdots i_k}^{s+t}(M_{i_1 \cdots i_k})\\
	& + \sum_{(j_1,\dots,j_k)\in \comb{[s]}{k}}\Aug{j_1\dots j_k}^{s+t}(Y_{j_1\dots j_k}).
	\end{align*}
	Since $\comb{[s]}{k},\comb{[t]}{k} \subseteq \comb{[s+t]}{k}$, we see the above indeed gives a valid decomposition of $k$th order cone induced by $\{\cone{k}\}_{k=1}^\infty$. 
	Now suppose $\diag(X_1,X_2)\in \cone{s+t}_k$. This gives 
	\begin{align*}
	\diag(X_1,X_2) = \sum_{(i_1,\dots, i_k)\in \comb{[s+t]}{k} }\Aug{i_1 \cdots i_k}^{s+t}(Z_{i_1 \cdots i_k})
	\end{align*}
	where $Z_{i_1 \cdots i_k} \in \cone{k}$. Apply $\Trn{1,2,\dots,s}$ and $\Trn{s+1,s+2,\dots,s+t}$ to both sides of the above equality gives  valid decompositions of $X_1 \in \cone{s}_k$ and $X_2 \in \cone{t}_k$ due to the \ep.
	
	For the dual cones, note that if $X\in (\cone{k})^*$, then for each $l\leq k$, 
	$\Trn{i_1\dots i_{l}}(X) \in (\cone{l})^* $ because of the \ep. The rest of the proof is similar to previous one. 
	\item  Fix $k\leq l<m$. Consider an increasing sequence $(i_1,\dots i_l)\in \comb{[m]}{l}$ and any $X \in \cone{l}_k, Z\in \cone{m}_k$. Then there are some $M_{j_1\dots j_k}\in \cone{k},Y_{n_1\dots n_k}\in \cone{k}$ such that 
	\begin{align*}
	\Aug{i_1,\dots,i_l} ^m(X) & = \Aug{i_1,\dots, i_l}^m\biggr(\sum_{(j_1,\dots,j_k)\in \comb{[l]}{k}}\Aug{j_1\dots j_k }^l(M_{j_1\dots j_k})\biggr) \\
	& =\sum_{(j_1,\dots,j_k)\in \comb{[l]}{k}}\Aug{i_1,\dots, i_l}^m\biggr(\Aug{j_1\dots j_k }^l(M_{j_1\dots j_k})\biggr),
	\end{align*}
	and 
	\begin{align*}
	\Trn{i_1\dots i_l} ^m(Z) &= \Trn{i_1\dots i_l}^m\biggr(\sum_{(n_1,\dots,n_k)\in \comb{[m]}{k}}\Aug{n_1\dots n_k }^m(Y_{n_1\dots n_k})\biggr) \\
	& =  \sum_{(n_1,\dots,n_k)\in\comb{[m]}{k}}\Trn{i_1\dots i_l}^m\biggr(\Aug{n_1\dots n_k }^m(Y_{n_1\dots n_k})\biggr)
	\end{align*}
	Since $\Aug{i_1,\dots, i_l}^m\circ (\Aug{j_1\dots j_k }^l) = \Aug{i_{j_1}\dots i_{j_k}}^m$, we see $\Aug{i_1,\dots,i_l} ^m(X) $ is indeed a member of $\cone{m}_{k}$.  Using the \ep for each $\Trn{i_1\dots i_l}^m\biggr(\Aug{n_1\dots n_k }^m(Y_{n_1\dots n_k})\biggr)$,  we see $\Trn{i_1\dots i_l} ^m(Z) \in \cone{l}_{k}$.  
\end{enumerate}
\end{proof}

Given the definition of higher order cone and dual cone, we can consider their corresponding conic programs. We assume the underlying filed is real for simplicity. More precisely, the $k$th order cone program (standard form) is
\begin{equation}\label{program:kocpstandard}
\begin{aligned} 
& \text{minimize}& & \tr(A_0X) \\
& \text{subject to}\;& & \tr(A_iX)=b_i ,\quad i =1,\dots,p \\
& & & X\in \cone{d}_k(J)
\end{aligned}
\end{equation}
where $A_1,\dots,A_p \in\mathbb{R}^{n\times n}$, $ b_1,\dots,b_p\in \mathbb{R}$

Alternatively, $k$th order cone program (inequality form)  is
\begin{equation}\label{program:kocpinequality}
\begin{aligned}
&\text{minimize} & & q^\tp  x \\
& \text{subject to}& & x_1P_1+x_2P_2+\dots+x_kP_k+P_0\in \cone{d}_k(J)
\end{aligned}
\end{equation}
where $P_0,\dots,P_k \in \mathbb{S}^d$ and $q\in \mathbb{R}^n$. The constraint here is called linear matrix inequality (LMI).

We call these programs \emph{$k$OCP} induced by $\cone{k}$ with set $J$ and simply \emph{$k$OCP} if the underlying cone $\cone{k}$ is clear from the context and $J = I(k,d)$. We note that the ambient dimension $d$ might change from problem to problem as the case of semidefinite programming where the ambient dimension $d$ is not specified, i.e., we write $X\succeq 0$ meaning $X$ is positive semidefinite but did not specify the size of $X$. 

If the nested cones property is satisfied by the underlying cone $\cone{k}$ (which is true when $I(d,k)$ satisfies the condition of first item, Nested Cones, of Proposition \ref{proposition: koc}), the above program serves as inner approximation of $\cone{d}$ program.  

We state an equivalence theorem of the two form when $\cone{k}$ satisfies the \ep thoroughly. 

\begin{theorem}\label{thm:equivalencesdfineq}
	If $\{\cone{k}\}_{k=1}^{\infty}$ satisfies the \ep thoroughly, then the inequality form and the standard form are equivalent. 
\end{theorem}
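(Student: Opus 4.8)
The plan is to prove the theorem by giving two explicit, value-preserving reductions, since ``equivalent'' means here that every program of one form can be rewritten as a program of the other (a $k$OCP of the \emph{same} order $k$, though the ambient dimension may grow). The standard$\,\to\,$inequality direction needs no hypothesis at all, while the inequality$\,\to\,$standard direction is where thoroughness is used.

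\textbf{Standard $\Rightarrow$ inequality.} Given \eqref{program:kocpstandard}, I would parametrize the affine set $\{X\in\mathbb{S}^d:\tr(A_iX)=b_i,\ i\in[p]\}$. If it is empty the program is infeasible and matches an infeasible inequality program; otherwise I fix a point $X_0$ in it and a basis $N_1,\dots,N_r$ of the linear space $\{N\in\mathbb{S}^d:\tr(A_iN)=0\}$ and substitute $X=X_0+\sum_{j=1}^r y_jN_j$. The equality constraints vanish, the conic constraint becomes $\sum_j y_jN_j+X_0\in\cone{d}_k$, and the objective becomes $\sum_j \tr(A_0N_j)\,y_j+\tr(A_0X_0)$; dropping the additive constant gives exactly an inequality-form $k$OCP \eqref{program:kocpinequality} whose feasible set and minimizers are in affine bijection with the original's, the optimal values differing only by the fixed constant $\tr(A_0X_0)$.

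\textbf{Inequality $\Rightarrow$ standard.} Two consequences of the thorough \ep drive the reduction. First, since $k_0=1$ the cone $\cone{1}$ belongs to the sequence; as a proper (hence closed, pointed, solid) cone in $\mathbb{S}^1=\mathbb{R}$ it is a closed half-line, so after replacing it by its reflection if necessary, $\cone{1}=\mathbb{R}_{\ge 0}$. Second, the \ep gives $\Aug{1}^k(t)\in\cone{k}$ for every $t\ge 0$ and $\Trn{1}^k(M)=[m_{11}]\in\cone{1}$ for every $M\in\cone{k}$, so $\{m_{11}:M\in\cone{k}\}=\mathbb{R}_{\ge 0}$: a $k\times k$ block constrained to $\cone{k}=\cone{k}_k$ carries \emph{precisely} an arbitrary nonnegative scalar in its $(1,1)$ entry. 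Now take \eqref{program:kocpinequality} with variable $x\in\mathbb{R}^m$ and constraint $\sum_{j=1}^m x_jP_j+P_0\in\cone{d}_k$, $d\ge k$. I would represent each free $x_j$ as $(U_j)_{11}-(V_j)_{11}$ for new blocks $U_j,V_j\in\mathbb{S}^k$, add a slack block $S\in\mathbb{S}^d$, and set $X\coloneqq\diag(U_1,V_1,\dots,U_m,V_m,S)\in\mathbb{S}^{2mk+d}$. Repeated application of Proposition \ref{proposition: koc}(iii) (legitimate because $I(d,k)=\comb{[d]}{k}$ for all $d$) shows $X\in\cone{2mk+d}_k$ iff all $U_j,V_j\in\cone{k}_k=\cone{k}$ and $S\in\cone{d}_k$. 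Imposing the entrywise linear equalities that encode $S=\sum_j\bigl((U_j)_{11}-(V_j)_{11}\bigr)P_j+P_0$ and taking the objective $\sum_j q_j\bigl((U_j)_{11}-(V_j)_{11}\bigr)$ — both linear in the entries of $X$, hence of the forms $\tr(A_iX)=b_i$ and $\tr(A_0X)$ — yields a standard-form $k$OCP. The maps $X\mapsto x$, $x_j=(U_j)_{11}-(V_j)_{11}$, and $x\mapsto X$, $U_j=(x_j)_+\,\Aug{1}^k(1)$, $V_j=(x_j)_-\,\Aug{1}^k(1)$, $S=\sum_j x_jP_j+P_0$, are mutually inverse on feasible points and preserve the objective, so the two programs share their optimal value and their minimizers correspond.

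\textbf{Main obstacle.} Everything in the first reduction, and the affine parametrization, the difference encoding, and the entrywise bookkeeping in the second, is routine linear algebra. The one point that needs care is faithfully encoding the \emph{unconstrained} scalars $x_j$ inside a single pointed cone, and this is exactly what the thorough \ep supplies: $k_0=1$ together with properness pins down $\cone{1}=\mathbb{R}_{\ge 0}$; the \ep links $\cone{1}$ and $\cone{k}$ in both directions so that each $k\times k$ block in $\cone{k}$ is a faithful slot for one nonnegative number; and Proposition \ref{proposition: koc}(iii) certifies that stacking these slots block-diagonally with the slack block remains inside the single cone $\cone{2mk+d}_k$. I expect no difficulty beyond getting these identifications exactly right.
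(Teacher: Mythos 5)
Your argument is correct in substance but takes a genuinely different route from the paper's in the standard-to-inequality direction, and a variant route in the converse one. The paper keeps the entries of $X$ as the scalar variables of the inequality form, turns $X\in\cone{d}_k$ into an LMI in those entries, and encodes each equality $\tr(A_iX)=b_i$ as a \emph{pair} of one-sided cone constraints on $\diag([\tr(A_iX)-b_i]_i)$ and $\diag([b_i-\tr(A_iX)]_i)$ via Lemma \ref{lemma: geqzerokoc}, gluing everything together with the membership property; your affine parametrization $X=X_0+\sum_j y_jN_j$ eliminates the equalities outright, needs neither Lemma \ref{lemma: geqzerokoc} nor the thorough \ep for this direction, and is arguably cleaner, at the modest cost of a feasibility case split and a nonconstructive choice of $X_0$. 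In the converse direction both arguments split $x=x^+-x^-$ and stack blocks diagonally: the paper stores $x^\pm$ as $\diag(x^\pm)$ inside an order-$k$ cone using Lemma \ref{lemma: geqzerokoc} (and must duplicate coordinates when the number of variables is below $k$), whereas you store each nonnegative scalar as the $(1,1)$ entry of a full $k\times k$ block of $\cone{k}$, which sidesteps that size issue; both hinge on the same two facts, $\cone{1}=\mathbb{R}_{\ge 0}$ after reflection and the membership property of Proposition \ref{proposition: koc}.

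One point to tighten: the membership property characterizes when a \emph{block-diagonal} matrix lies in $\cone{2mk+d}_k$, but the standard-form program you construct admits arbitrary $X\in\cone{2mk+d}_k$ satisfying the stated equalities, so your maps are not literally ``mutually inverse.'' Either add linear constraints $\tr(E_{ij}X)=0$ forcing the off-block entries to vanish, as the paper does, or observe that by the inheritance property (Proposition \ref{proposition: koc}(iv)) the truncation of any feasible $X$ to the $S$-block still lies in $\cone{d}_k$ and the diagonal entries playing the roles of $(U_j)_{11}$, $(V_j)_{11}$ are automatically nonnegative, so that feasibility and objective values are still preserved in both directions and the equivalence of optimal values survives without those extra constraints.
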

\begin{proof}
	Without loss of generality, we assume that $\cone{1} = \mathbb{R}_+$ (because one dimensional proper cone is either $\mathbb{R}_+$ or $\mathbb{R}_-$) and $p\geq k$ (we can repeat a few constraints if $p<k$).
	
	 By Lemma \ref{lemma: geqzerokoc} proved in the Appendix, we find that for any $x\in \mathbb{F}^d$, \[\diag(x)\in \cone{k}_k\iff x\geq 0,\] where $x\geq 0$ means each component of $x$ is greater or equal to $0$.
	
	We first prove the direction from the standard form to inequality form, i.e., \eqref{program:kocpstandard} to \eqref{program:kocpinequality}:
	
	By treating $X$ as a long vector, the objective and the conic constraint $X\in \cone{d}_k$ can be transformed in a standard way. Indeed, the objective is just $\sum_{ij}(A_0)_{ij}x_{ij}$.  For conic constraint, we have 
	\begin{equation}\label{eq:conicconstrainttoLMI}
	\begin{aligned} 
	X\in \cone{d}_k \iff \sum_{j>k}x_{jk}(E_{jk}+E_{kj})+\sum_{j=1}^n x_{jj}E_{jj}\in \cone{d}_k,
	\end{aligned}
	\end{equation}
	 where $E_{jk}$ are the matrices with only non-zero entry $1$ at $(j,k)$th entry. The linear constraint 
	$\tr(A_iX)=b_i$ can be encoded by 
	\begin{equation}\label{eq:lineartoLMI}
	\begin{aligned}
	\diag([\tr(A_iX)-b_i]_{i=1}^p) \in \cone{k}_k,\quad \diag([b_i-\tr(A_iX)]_{i=1}^p) \in \cone{k}_k.
	\end{aligned}
	\end{equation}
	Finally, using membership property in Lemma \ref{proposition: koc}, the transformed linear constraints \eqref{eq:lineartoLMI} and the transformed conic constraint \eqref{eq:conicconstrainttoLMI} can be made into one big $k$th order cone linear matrix inequality.

	We now prove the direction from inequality form to standard form, i.e., \eqref{program:kocpinequality} to \eqref{program:kocpstandard}:
	
	First we can write $x = x^+-x^-$ as two non-negative vectors (element wise non-negative). Let $\bar{X} =  x_1P_1+x_2P_2+\dots+x_kP_k+P_0$, then the inequality form \eqref{program:kocpinequality} can be transformed to 
	\begin{equation*}
	\begin{aligned}
	& \underset{x^+,x^-,\bar{X}}{ \text{minimize}} & & q^\tp x^+- q^\tp x^-
	\\ &   \text{subject to}& & \sum_{i=1}^{k}(x^+_iP_i-x^-_iP_i) -\bar{X}=-P_0\\
	& & &\bar{X}  \in \cone{d}_k,\quad x^+\geq 0,\quad x^-\geq 0.
	\end{aligned}
	\end{equation*}
	It can then be transformed to \eqref{program:kocpstandard}. We may let the $X$ in \eqref{program:kocpstandard} be 
	\[
	X= \begin{bmatrix}
	\diag(x^+) & & \\
	& \diag(x^-) & \\
	& & \bar{X}\\
	\end{bmatrix}.
	\]
	
	The objective in \eqref{program:kocpstandard} then can be easiy formulated as $D_0 = \begin{bmatrix}
	\diag(q) & & \\ & -\diag(q) & \\ & & 0\\
	\end{bmatrix}$. The equality constraints are just a re-statement of  the elementwise version of  $\sum_{i=1}^{k}(x^+_iP_i-x^-_iP_i) -\bar{X}=-P_0$. So $D_i,f_i$ are setted so that $\tr(D_iX)= [\sum_{t=1}^{k}(x^+_tP_t-x^-_tP_t) -\bar{X}]_{jk}=-P_{jk}=f_i$. A total of $\frac{n(n+1)}{2}$ constraints can be obtained from this method. To enforce the $0$ in $X$, we can put more $\tr(E_{ij}X)=0$ constraints on $X$ with position index $(i,j)$ of $0$ in $X$ where $E_{ij}$ is defined as the previous part.  These linear constraints implies that  for $n\geq k$, $X\in \cone{d+2n}_k$ if and only if $x^-,x^+\geq 0, \bar{X}\in \cone{d}_k$ because the membership property
	 and Lemma \ref{lemma: geqzerokoc}. If $n<k$, we may simply repeat $x^-, x^+$ in $X$ and enforce the repetition by adding more linear constraints. 
\end{proof}
The dual $k$OCP (standard form) is 
\[
\begin{aligned} 
&  \text{minimize} & & \tr(A_0X) \\
& \text{subject to}\;& & \tr(A_iX)=b_i,\quad i=1,\dots,p \\
& & & X\in (\cone{d}_k(J))^*
\end{aligned}
\]
where $A_1,\dots,A_p \in\mathbb{R}^{n\times n}$, $ b_1,\dots,b_p\in \mathbb{R}$ and the dual $k$OCP (inequality form) is 
\[
\begin{aligned}
&\text{minimize} & & q^{\top}x \\
& \text{subject to}& & x_1P_1+x_2P_2+\dots+x_kP_k+P_0\in (\cone{d}_k(J))^*
\end{aligned}
\]
where $P_0,\dots,P_k \in \mathbb{S}^n$.

\section{Positive semidefinite cone}

Our first example is the cone of positive semidefinite matrices with dimension $d$: 
\[ \psd{d} \coloneqq \{A\in \mathbb{S}^d : A = FF^{\top} \, \text{for some}\, F\in \mathbb{R}^{d\times r},\,r\in  \mathbb{N}\}.\]

Clearly, the sequence of cones $\{\psd{k}\}_{k=1}^d$ satisfy the \ep thoroughly. The first two order cones are: 
\begin{enumerate}[\upshape (i)]
	\item $(\psd{d})_1 = \diag(\mathbb{R}_+^d) \cong \mathbb{R}_+^d$. Note that from the inheritance property, fourth item of Proposition \ref{proposition: koc}, the nonnegative orthant $\diag(\mathbb{R}_+^d) \cong\mathbb{R}_+^d$ satisfies the embedding property throughly as well, which can also be directly verified.
	\item $(\psd{d})_2  = \{A : A =\sum\nolimits_{i<j}\Aug{ij}(M^{ij}), \quad M^{ij}\in \mathbb{S}_+^2\}$.  Note this series of cone also satisfied the embedding property throughly by attaching $\mathbb{R}_+$ to the series $\{(\psd{d})_2\}_{d=2}^\infty$. 
\end{enumerate}

It turns out that the second order cone $(\psd{d})_2$ actually is the same as the set of symmetric scaled diagonally dominant matrices with nonnegative diagonals (SDD), $\SDD{d}$,
\[
\SDD{d} \coloneqq  \{  M \in \mathbb{S}^{d} : \text{there exists }  d>0, d_ia_{ii}\geq \sum\nolimits_{j \ne i} d_j|a_{ij}|, \quad \text{for all}\, i=1,\dots, d\},
\]
as shown in the following lemma, which appeared in \cite[Theorems~8 and 9]{boman2005factor} and \cite[Lemma~9]{ahmadi2017dsos}.

\begin{lemma}\label{lemma: 2oc=sdd} $(\psd{d})_2  = \SDD{d}$.
\end{lemma}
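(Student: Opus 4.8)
The plan is to prove the two inclusions $(\psd{d})_2 \subseteq \SDD{d}$ and $\SDD{d} \subseteq (\psd{d})_2$ separately, both by explicit construction.

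\textbf{The inclusion $(\psd{d})_2 \subseteq \SDD{d}$.} Suppose $M = \sum_{i<j} \Aug{ij}^d(M^{ij})$ with each $M^{ij} \in \psd{2}$. Write $M^{ij} = \begin{bmatrix} a_{ij} & c_{ij} \\ c_{ij} & b_{ij} \end{bmatrix}$, so that positive semidefiniteness gives $a_{ij}, b_{ij} \ge 0$ and $a_{ij} b_{ij} \ge c_{ij}^2$. Then $m_{ii} = \sum_{j : j \ne i} (\text{the diagonal contribution})$, namely $m_{ii} = \sum_{j > i} a_{ij} + \sum_{j < i} b_{ji}$, and the off-diagonal entry $m_{ij}$ (for $i<j$) equals $c_{ij}$. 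The first plan would be to exhibit positive scalars $d_1, \dots, d_d$ witnessing scaled diagonal dominance. Here I would split each $2\times 2$ block's AM--GM slack: since $a_{ij} b_{ij} \ge c_{ij}^2$, one can pick parameters so that $t_{ij} a_{ij} \ge |c_{ij}|$ and $t_{ij}^{-1} b_{ij} \ge |c_{ij}|$ for a suitable $t_{ij} > 0$ (taking $t_{ij} = \sqrt{a_{ij}/b_{ij}}$ when both are positive, and handling degenerate cases where $c_{ij} = 0$ separately). The obstacle is that the $t_{ij}$ are per-block and must be reconciled into a single global scaling vector $d$. Rather than fight this, the cleaner route is to invoke the known characterization: $M \in \SDD{d}$ iff there is a positive diagonal $D$ with $DMD$ diagonally dominant, and in turn $\SDD{d}$ equals the set of $M$ expressible as $\sum_{i<j} \Aug{ij}^d(N^{ij})$ with $N^{ij} \in \DD{2} = \psd{2}$ (the $2\times 2$ diagonally dominant and PSD cones coincide). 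So I would first record the elementary fact $\DD{2} = \psd{2}$, then observe that a $d \times d$ matrix is in $\SDD{d}$ precisely when it decomposes as a sum of $\Aug{ij}^d$-lifts of $2\times 2$ diagonally dominant blocks; this is exactly the content of \cite[Theorem~8]{boman2005factor}, and combined with $\DD 2 = \psd 2$ it is $(\psd{d})_2$.

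\textbf{The inclusion $\SDD{d} \subseteq (\psd{d})_2$.} Conversely, given $M \in \SDD{d}$, fix the witnessing positive vector $d = (d_1, \dots, d_d)$ with $d_i m_{ii} \ge \sum_{j \ne i} d_j |m_{ij}|$. The plan is to distribute the diagonal of $M$ among the off-diagonal positions: for each pair $i < j$ define the slack-weighted block
\[
M^{ij} = \begin{bmatrix} \alpha_{ij} & m_{ij} \\ m_{ij} & \beta_{ij} \end{bmatrix},
\]
where $\alpha_{ij}, \beta_{ij} \ge 0$ are chosen so that $\sum_{j : j > i} \alpha_{ij} + \sum_{j : j < i} \beta_{ji} = m_{ii}$ for every $i$, and so that each $M^{ij}$ is PSD, i.e. $\alpha_{ij}\beta_{ij} \ge m_{ij}^2$. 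A concrete choice: set $\alpha_{ij} = \tfrac{d_j}{d_i} |m_{ij}| + \varepsilon_{ij}$ and $\beta_{ij} = \tfrac{d_i}{d_j} |m_{ij}| + \delta_{ij}$ with nonnegative corrections $\varepsilon_{ij}, \delta_{ij}$; the base terms already satisfy $\alpha_{ij}\beta_{ij} \ge m_{ij}^2$ with room to spare, and the scaled-diagonal-dominance inequality $d_i m_{ii} \ge \sum_{j\ne i} d_j|m_{ij}|$ guarantees there is enough total diagonal mass at each index $i$ to absorb $\sum_{j>i}\tfrac{d_j}{d_i}|m_{ij}| + \sum_{j<i}\tfrac{d_i}{d_j}|m_{ij}|$ — wait, I would need to check that bookkeeping carefully, since the scaled inequality bounds $\sum_j d_j|m_{ij}|$ and what I need at index $i$ after dividing through by $d_i$ is $\sum_{j\ne i}\tfrac{d_j}{d_i}|m_{ij}| \le m_{ii}$, which is exactly the hypothesis. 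Any leftover diagonal slack can be dumped into, say, the $\alpha$ or $\beta$ terms of a single incident block per vertex without destroying PSD-ness (adding to one diagonal entry of a PSD $2\times 2$ matrix keeps it PSD). Then $M = \sum_{i<j} \Aug{ij}^d(M^{ij})$ exhibits $M \in (\psd{d})_2$.

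\textbf{Main obstacle.} The genuinely delicate point is the combinatorial accounting in the second inclusion: ensuring that the per-edge diagonal demands $\tfrac{d_j}{d_i}|m_{ij}|$ can be met simultaneously at every vertex using only the available $m_{ii}$, and that the residual slack is allocated so that every $M^{ij}$ stays PSD. This is where the scaled-diagonal-dominance hypothesis is used in full strength, and it is the step I would write out in detail; everything else ($\DD 2 = \psd 2$, the $2\times 2$ PSD criterion, linearity of $\Aug{ij}^d$) is routine. Since this lemma is attributed to \cite{boman2005factor} and \cite{ahmadi2017dsos}, an acceptable alternative is simply to cite those sources and sketch the factor-width-two interpretation: $\SDD{d}$ is exactly the cone of matrices of "factor width at most $2$," and such matrices are precisely the sums of PSD matrices each supported on a single $2\times 2$ principal submatrix, which is the definition of $(\psd{d})_2$.
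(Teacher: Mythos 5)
Your second inclusion $\SDD{d}\subseteq(\psd{d})_2$ is essentially the paper's argument and is fine: the blocks with off-diagonal $m_{ij}$ and diagonals $\tfrac{d_j}{d_i}|m_{ij}|$, $\tfrac{d_i}{d_j}|m_{ij}|$ are PSD with determinant zero, the total diagonal demand at index $i$ is $\sum_{j\ne i}\tfrac{d_j}{d_i}|m_{ij}|\le m_{ii}$, which is exactly the scaled-dominance hypothesis, and the leftover slack can be added to diagonals without losing positive semidefiniteness.

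The first inclusion, however, has a genuine gap. You correctly identify the real difficulty --- reconciling the per-block scalings $t_{ij}=\sqrt{a_{ij}/b_{ij}}$ into a single global positive vector $d$ --- but you do not resolve it: you fall back on citing \cite[Theorem~8]{boman2005factor}, which is essentially the statement being proved, so nothing is actually established. Worse, the supporting claim you record, $\DD{2}=\psd{2}$, is false: the matrix $\begin{bmatrix}4 & 1\\ 1 & 1/4\end{bmatrix}$ is PSD but not diagonally dominant (only $\SDD{2}=\psd{2}$ holds), so the chain ``$\SDD{d}=$ sums of lifted $\DD{2}$ blocks $=$ sums of lifted $\psd{2}$ blocks'' breaks at the last step. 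The paper's route avoids the gluing problem entirely via the comparison matrix $M(A)$ (diagonal kept, off-diagonals replaced by $-|a_{ij}|$): since flipping the sign of the off-diagonal entry of a $2\times 2$ PSD matrix preserves positive semidefiniteness, $M(A)=\sum_{i<j}\Aug{ij}^d(M(M^{ij}))$ is a sum of PSD matrices and hence PSD, and then the equivalence ``$A\in\SDD{d}\iff M(A)\succeq 0$'' (Lemma~\ref{lemma: sdd results}, proved in the appendix via Perron--Frobenius) produces the global scaling vector for you. You would need either to import that equivalence explicitly, or to carry out the Perron-vector construction of $d$ yourself, to close this direction.
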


We provide a simple, different and self-contained proof of this lemma based on the following lemma which can be found in Appendix.
\begin{lemma}\label{lemma: sdd results}
	Denote $M(A)=[\alpha_{ij}]$ where $\alpha_{ii}=a_{ii}$ for all $i$ and $\alpha_{ij}=-|a_{ij}|$ for all $i \ne j$ and $\rho(A)=\max\{|\lambda|: \lambda\; \text{is an eigenvalue of}\; A \}$.  The following are all equivalent when $A \in \mathbb{S}^n$.
	\begin{enumerate}[\upshape (i)]
		\item $A$ is SDD;
		\item $M(A)$ is positive semi-definite.
	\end{enumerate} 
\end{lemma}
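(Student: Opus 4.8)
The plan is to establish the two implications (i) $\Rightarrow$ (ii) and (ii) $\Rightarrow$ (i) directly, working over $\mathbb{F}=\mathbb{R}$. Throughout write $a=(a_{11},\dots,a_{nn})$ and let $C=[c_{ij}]\in\mathbb{S}^n$ be the entrywise modulus of the off-diagonal part of $A$, i.e. $c_{ii}=0$ and $c_{ij}=|a_{ij}|$ for $i\neq j$, so that $M(A)=\diag(a)-C$ with $C\geq 0$ entrywise; note that $A$ being SDD is exactly the assertion ``$\exists\,d>0$ with $M(A)d\geq 0$ componentwise''.

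For (i) $\Rightarrow$ (ii): pick $d>0$ witnessing that $A$ is SDD, so $a_{ii}\geq d_i^{-1}\sum_{j\neq i}d_j c_{ij}$ for every $i$. For an arbitrary $x\in\mathbb{R}^n$ I would expand $x^{\tp}M(A)x=\sum_i a_{ii}x_i^2-2\sum_{i<j}c_{ij}x_ix_j$, lower-bound $\sum_i a_{ii}x_i^2$ using the SDD inequality, reindex the resulting double sum over unordered pairs, and apply the elementary bound $\tfrac{d_j}{d_i}x_i^2+\tfrac{d_i}{d_j}x_j^2\geq 2|x_ix_j|\geq 2x_ix_j$ termwise (AM--GM together with $c_{ij}\geq 0$); this yields $x^{\tp}M(A)x\geq 0$, hence $M(A)\succeq 0$. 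This is the short, purely computational half.

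For (ii) $\Rightarrow$ (i): I would first reduce to the case $a_{ii}>0$ for all $i$, because if $a_{ii}=[M(A)]_{ii}=0$ then positive semidefiniteness of the $2\times 2$ principal submatrices of $M(A)$ forces $a_{ij}=0$ for all $j$, so index $i$ decouples from both conditions and may be deleted. With $\diag(a)>0$, conjugate: $M(A)=\diag(a)^{1/2}\bigl(I-\widetilde C\bigr)\diag(a)^{1/2}$ where $\widetilde C:=\diag(a)^{-1/2}C\diag(a)^{-1/2}\geq 0$ is symmetric; hence $M(A)\succeq 0$ is equivalent to $\lambda_{\max}(\widetilde C)\leq 1$, i.e. to $\rho(\widetilde C)\leq 1$ (for a nonnegative symmetric matrix the spectral radius equals the largest eigenvalue, by Perron--Frobenius --- this is where $\rho$ enters). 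It then suffices to produce $v>0$ with $\widetilde Cv\leq v$ componentwise, since then $d:=\diag(a)^{-1/2}v>0$ satisfies $M(A)d=\diag(a)^{1/2}(I-\widetilde C)v\geq 0$, which is precisely the SDD system for $A$. When $\rho(\widetilde C)<1$ one simply takes $v=(I-\widetilde C)^{-1}\mathbf{1}=\sum_{k\geq 0}\widetilde C^{\,k}\mathbf{1}\geq\mathbf{1}>0$, for which $\widetilde Cv=v-\mathbf{1}\leq v$.

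The main obstacle is the boundary case $\rho(\widetilde C)=1$, that is, $M(A)$ singular: now $I-\widetilde C$ is not invertible, and the naive fix of perturbing $\widetilde C\rightsquigarrow\widetilde C/(1+\epsilon)$ and passing to a limit only produces a nonnegative $v$, not the strictly positive scaling vector the definition of SDD demands. The resolution exploits symmetry: permute $\widetilde C$ into block-diagonal form whose diagonal blocks correspond to the connected components of its graph --- each block is either a $1\times 1$ zero or an irreducible nonnegative matrix. On an irreducible block $\widetilde C_\ell$, Perron--Frobenius supplies a strictly positive eigenvector $v_\ell$ with $\widetilde C_\ell v_\ell=\rho(\widetilde C_\ell)v_\ell\leq v_\ell$ (using $\rho(\widetilde C_\ell)\leq\rho(\widetilde C)=1$); on a zero block take $v_\ell=1$. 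Gluing the $v_\ell$ (and undoing the permutation) gives $v>0$ with $\widetilde Cv\leq v$, completing the argument. The only genuinely delicate point is thus that symmetry upgrades the Frobenius normal form to a block-\emph{diagonal} decomposition, which is what allows the per-block Perron vectors to be assembled into a globally strictly positive $d$.
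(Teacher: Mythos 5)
Your proof is correct, and in the crucial direction (PSD $\Rightarrow$ SDD) it follows essentially the same route as the paper: permute the symmetric nonnegative matrix into block-diagonal form with irreducible (or trivial) blocks, apply Perron--Frobenius on each block to obtain strictly positive Perron vectors, and glue them into a global strictly positive scaling vector. The only differences are cosmetic --- the paper runs this argument on $B=\lambda_1 I-M(A)$ rather than on your Jacobi-scaled $\widetilde C$ (which spares it your preliminary reduction to $a_{ii}>0$), and it handles the easy direction via Gershgorin plus congruence with $\diag(d)$ rather than your direct AM--GM expansion of $x^{\tp}M(A)x$.
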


\begin{proof}[Proof of Lemma~\ref{lemma: 2oc=sdd}]
	We let $M(A)=[\alpha_{ij}] \in \mathbb{S}^{d}$ where $\alpha_{ii}=a_{ii}$  for $i=1,\dots,d$, and $\alpha_{ij}=-|a_{ij}|$ for all $i  \ne j$; this is often called the comparison matrix \cite{BP} of $A$. To show that  $\SDD{d}\supset(\psd{d})_2$, suppose $A\in (\psd{d})_2$. Then $A= \sum_{i<j} M^{ij}$. Since $M(A)=\sum_{i<j} M(M^{ij})$ with $M(M^{ij})\in \psd{d}$, $M(A)$ belongs to both $(\psd{d})_2$ and $\psd{d}$. It follows from Lemma~\ref{lemma: sdd results} that $A\in \SDD{d}$.
	
	Now suppose $A\in \SDD{d}$. There exists $d=(d_1,\dots,d_n)>0$  such that $d_i a_{ii} \geq \sum_{j \ne i}d_j |a_{ij}|$ for each $i$, which allows us to define $M^{ij}$ by
	\[
	m^{ij}_{ij} = a_{ij}, \quad m^{ij}_{ii}= \frac{d_j}{d_i}a_{ij}, \quad m^{ij}_{jj}=\frac{d_i}{d_j}a_{ij}.
	\]
	We may then increase the values of $m^{ij}_{ii}$ and $m^{ij}_{jj}$ appropriately so that they sum up to the respective diagonal entries of $A$. This shows that $\SDD{d}\subset(\psd{d})_2$.
\end{proof} 
For general $J\subseteq \comb{[d]}{2}$, the equality in Lemma \ref{lemma: 2oc=sdd} does not hold, i.e., $(\psd{d})_2(J)  \ne \SDD{d}$  for general $J$.

The $k$OCP in this case is actually very interesting. The $1$OCP is simply Linear Program (LP) since  $(\psd{d})_1 = \diag(\mathbb{R}_+) \cong \mathbb{R}_+$, the $2$OCP in this case is SDD program. We show in the following theorem that SDD program is the same as Second Order Cone Program (SOCP): 
\begin{equation}\label{socp}
\begin{aligned} 
& \text{minimize} & & a^\tp x  \\
& \text{subject to} &  &\|A_ix+b_i\|_2\leq c_i^\tp x+d_i, \quad i =1,\dots, q, \\  
& & &  Bx = e.
\end{aligned}
\end{equation}
\begin{theorem}\label{theorem: equivalence socp and SDD}
	SDD program is equivalent to SOCP, i.e., SOCP can be casted into SDD Program and vice versa. 
\end{theorem}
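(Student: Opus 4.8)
The plan is to establish the equivalence by showing each direction of the translation between SOCP and SDD programs. For the direction SOCP $\to$ SDD, the key observation is that a second-order cone constraint $\|y\|_2 \le t$, where $y \in \mathbb{R}^{m}$ and $t \in \mathbb{R}$, is equivalent to positive semidefiniteness of the $2\times 2$-block arrow matrix $\begin{bmatrix} t I_m & y \\ y^\tp & t \end{bmatrix}$, and hence — via a rotation/Schur-complement argument — to a matrix built from $2\times 2$ blocks each of the form $\begin{bmatrix} t & y_i \\ y_i & t \end{bmatrix} \in \psd{2}$ after a suitable change of variables. More directly, I would exhibit explicitly an SDD matrix whose membership in $\SDD{d}$ (with a known diagonal-scaling vector) is equivalent to $\|y\|_2 \le t$. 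A clean route: $\|y\|_2 \le t$ holds iff the $(m{+}1)\times(m{+}1)$ matrix with $t$ on the diagonal and the entries of $y$ in the last row/column (and zeros elsewhere) is PSD; this matrix lies in $(\psd{m+1})_2$ because it decomposes as $\sum_{i=1}^m \Aug{i,m+1}(M^i)$ with $M^i = \begin{bmatrix} t/? & y_i \\ y_i & ? \end{bmatrix}$ — and one checks the diagonal can be apportioned among the $m$ summands so each $M^i \succeq 0$ precisely when $\sum_i y_i^2 \le t \cdot (\text{diag mass})$, which is the SOCP inequality after rescaling. Then, using Lemma~\ref{lemma: 2oc=sdd}, membership in $(\psd{m+1})_2 = \SDD{m+1}$ is exactly an SDD constraint. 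Stacking these blocks for $i = 1,\dots,q$ into one block-diagonal matrix, together with encoding the affine constraints $Bx = e$ and $c_i^\tp x + d_i \ge 0$ as $1\times 1$ (i.e., $\mathbb{R}_+$) blocks, yields a single SDD constraint; the objective is untouched.

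For the reverse direction SDD $\to$ SOCP, I would start from the defining inequalities of $\SDD{d}$: membership of an affine matrix pencil $X(x) = \sum_i x_i P_i + P_0$ in $\SDD{d}$ means there is a positive scaling $d = (d_1,\dots,d_d)$ with $d_i X_{ii} \ge \sum_{j\ne i} d_j |X_{ij}|$ for all $i$. The scaling introduces a nonconvex-looking coupling, so instead I would use the characterization $(\psd{d})_2 = \SDD{d}$ and work with the decomposition form: $X \in \SDD{d}$ iff $X = \sum_{i<j} \Aug{ij}(M^{ij})$ with each $M^{ij} = \begin{bmatrix} \mu^{ij}_{ii} & \mu^{ij}_{ij} \\ \mu^{ij}_{ij} & \mu^{ij}_{jj}\end{bmatrix} \succeq 0$. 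Introducing the entries of the $M^{ij}$ as new decision variables, the constraint $X = \sum_{i<j}\Aug{ij}(M^{ij})$ is a system of linear equalities, and each $2\times 2$ PSD constraint $M^{ij} \succeq 0$ is equivalent to the rotated second-order cone constraint $\mu^{ij}_{ii} \ge 0$, $\mu^{ij}_{jj} \ge 0$, $\mu^{ij}_{ii}\mu^{ij}_{jj} \ge (\mu^{ij}_{ij})^2$, i.e., $\left\| \begin{bmatrix} 2\mu^{ij}_{ij} \\ \mu^{ij}_{ii} - \mu^{ij}_{jj} \end{bmatrix} \right\|_2 \le \mu^{ij}_{ii} + \mu^{ij}_{jj}$, which is a standard SOC constraint. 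Collecting these gives an SOCP in the enlarged variable space $(x, \{M^{ij}\})$ with the same optimal value; the affine equality constraints of the original SDD program (the $Bx = e$ analogue / the LMI equalities after vectorizing $X(x)$) carry over directly.

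The main obstacle I anticipate is the bookkeeping in the SOCP $\to$ SDD direction: getting the diagonal apportionment right so that a single SDD matrix simultaneously and faithfully encodes (a) several second-order cone constraints of possibly different dimensions, (b) the linear inequality constraints $c_i^\tp x + d_i \ge 0$ (needed to make sense of the right-hand sides), and (c) the linear equalities $Bx = e$ — all while keeping the correspondence an exact equivalence rather than a relaxation. The cleanest way to handle this is to treat each piece separately, produce a small SDD (equivalently $(\psd{k})_2$) gadget for each, and then invoke the membership/block-diagonal property (Proposition~\ref{proposition: koc}(iii), together with Lemma~\ref{lemma: 2oc=sdd} applied dimension-by-dimension) to merge them into one constraint. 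Degenerate cases — e.g.\ $c_i^\tp x + d_i = 0$ forcing $A_i x + b_i = 0$, or a second-order cone of dimension $1$ — should be noted but handled uniformly by the same $2\times 2$-block construction. Once the gadget for a single SOC constraint is verified, the rest is assembly, and symmetry of the two constructions gives the full equivalence.
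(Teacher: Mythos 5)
Your proposal is correct and follows essentially the same route as the paper: the arrow-matrix gadget $\begin{bmatrix} tI & y \\ y^\tp & t\end{bmatrix}$ handles the SOCP-to-SDD direction, the $2\times 2$ rotated-cone reformulation handles SDD-to-SOCP (which the paper simply cites from Ahmadi--Majumdar), and the assembly of multiple constraints uses the membership/block-diagonal property and the standard-form/inequality-form equivalence just as in Theorem~\ref{thm:equivalencesdfineq}. The only cosmetic difference is that you certify membership of the arrow matrix in $(\psd{m+1})_2$ by an explicit diagonal apportionment (e.g.\ giving the $(m{+}1,m{+}1)$ entry mass $t y_i^2/\|y\|_2^2$ to the $i$th block), whereas the paper passes through the comparison matrix and Lemma~\ref{lemma: sdd results}; both verifications are valid.
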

\begin{proof}
The fact that SDD program can be optimized using SOCP has been shown in \cite[Theorem 10]{ahmadi2017dsos}, which is just an easy consequence of Lemma \ref{lemma: 2oc=sdd}. We are only left to show the other direction. We will show one can transform a SOCP to the inequality form of SDD program. The equivalence between inequality form and standard form of SDD program follows from Theorem \ref{thm:equivalencesdfineq}.

Our only difficulty is to transform a SOC constraint,
\[\|A_ix+b_i\|_2\leq c_i^\tp x+d_i,\]
 to a SDD constraint.  We know
 \[\|A_ix+b_i\|_2\leq c_i^\tp x+d_i \iff \begin{bmatrix}
 (c_i^\tp x + d)I & A_ix +b_i\\
 (A_ix +b_i)^\tp  & c_i^\tp  x+d\\
 \end{bmatrix} \in \mathbb{S}_+^n\iff \begin{bmatrix}
 (c_i^\tp x + d)I & -|A_ix +b_i|\\
 -|(A_ix +b_i)^\tp | & c_i^\tp  x+d\\
 \end{bmatrix} \in \mathbb{S}_+^n \]
 for appropriate $n$ by the Schur complement condition for positive semi-definiteness, i.e., \[
 X= \begin{bmatrix}
 A & B \\ B^\tp  & C\\
 \end{bmatrix} \in \mathbb{S}^n_+ \iff A\in \mathbb{S}^m_+, C-B^\tp A^{-1}B\in \mathbb{S}^{h},
 \]
 where $m,h$ are number of rows of $A$ and $C$. Now using Lemma \ref{lemma: sdd results}, we see \[\begin{bmatrix}
 (c_i^\tp x + d)I & -|A_ix +b_i|\\
 -|(A_ix +b_i)^\tp | & c_i^\tp  x+d\\
 \end{bmatrix} \in \mathbb{S}_+^n \iff \begin{bmatrix}
 (c_i^\tp x + d)I & A_ix +b_i\\
 (A_ix +b_i)^\tp  & c_i^\tp  x+d\\
 \end{bmatrix} \in (\psd{n})_2.\] The last equation is a linear $(\psd{n})_2$ constraint and we see SOCP can be transformed to SDD program and so the two are equivalent.
\end{proof}
Thus we have shown that the intermediate program between LP, SOCP and SDP are $k$OCP and 
\begin{itemize}
	\item $1$OCP  = LP,
	\item $2$OCP = SOCP,
	\item $d$OCP = SDP,
	\item $k$OCP for $k=3,\dots, d-1$ are  intermediate programs: \[
	\begin{aligned} 
	&  \text{minimize} & & \tr(A_0X) \\
	& \text{subject to}\;& & \tr(A_iX)=b_i,\quad i=1,\dots,p \\
	& & & X\in (\psd{d})_k^*,
	\end{aligned}
	\]
	where $ (\psd{d})_k^* = \{ M : M=\sum_{ (i_1,\dots, i_k)\in {\comb{[d]}{k}}} \Aug{i_1 \cdots i_k}^d (M_{i_1 \cdots i_k}),\; M_{i_1 \cdots i_k}\in \psd{k}\}.$
\end{itemize}

The elements in higher order cone $(\psd{d})_k$ with $k\geq 3$ turns out to be known as factor-width $k$ matrices \cite{boman2005factor}. The corresponding program has being introduced in \cite{PP} before. 

The dual cones are : 
\[ ((\psd{d})_k)^* = \{A \in \mathbb{S}^{d} : \text{for all}\, (i_1,\dots,i_k)\in J,\,\Trn{i_1 \cdots i_k} (A) \in\psd{k}\} .\]
In the case of semidefinite cone, the nested inclusion for higher order cones $\{(\psd{d})_{k}\}_{k=1}^d$ and its dual cone series $\{((\psd{d})_k)^*\} _{k=1}^d$ are strict as shown in the following lemma. \label{key}
\begin{lemma}
	We have \[(\psd{d})_1\subsetneq(\psd{d})_2\subsetneq \dots \subsetneq (\psd{d})_d=\psd{d}\] and  \[((\psd{d})_1)^*\supsetneq((\psd{d})_2)^*\supsetneq \dots \supsetneq ((\psd{d}))^*_d=\psd{d}.\]
\end{lemma}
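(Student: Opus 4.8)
The weak inclusions $(\psd{d})_1\subseteq\dots\subseteq(\psd{d})_d$ are already supplied by Proposition~\ref{proposition: koc}(i) (with $J_k=\comb{[d]}{k}$), and $(\psd{d})_d=\psd{d}$ holds because $\Aug{1\cdots d}^d=\operatorname{id}_{\mathbb{S}^d}$; so the whole content of the lemma is strictness, and I may assume $d\ge 2$ (for $d=1$ the chain is a single cone and the claim is vacuous). The plan is to route everything through duality: passing to dual cones reverses inclusions and, trivially, carries equality to equality, so a single matrix separating $((\psd{d})_{k+1})^{*}$ from $((\psd{d})_{k})^{*}$ at once witnesses strictness of both chains at step $k$. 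First I would record the specialized dual-cone formula: by Proposition~\ref{proposition: koc}(ii) and the self-duality $(\psd{k})^{*}=\psd{k}$,
\[
((\psd{d})_k)^{*}=\bigl\{A\in\mathbb{S}^d : \Trn{i_1\cdots i_k}^d(A)\in\psd{k}\ \text{for all}\ (i_1,\dots,i_k)\in\comb{[d]}{k}\bigr\},
\]
i.e.\ $A$ lies in the $k$th dual cone exactly when every $k\times k$ principal submatrix of $A$ is positive semidefinite; in particular these cones are manifestly decreasing in $k$.

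The separating element at step $k$ (for $1\le k\le d-1$) will be $C:=\Aug{1\cdots(k+1)}^d(B)$, where $B:=kI_{k+1}-\mathbf{1}_{k+1}\mathbf{1}_{k+1}^{\top}\in\mathbb{S}^{k+1}$ and $\mathbf{1}_{k+1}$ is the all-ones vector. A one-line eigenvalue computation gives that $B$ has eigenvalue $-1$ on $\mathbf{1}_{k+1}$ and eigenvalue $k$ on $\mathbf{1}_{k+1}^{\perp}$, so $B\notin\psd{k+1}$, while every principal submatrix of $B$ of size $m\le k$ equals $kI_m-\mathbf{1}_m\mathbf{1}_m^{\top}$, whose eigenvalues are $k$ and $k-m\ge 0$, hence is positive semidefinite. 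I would then verify $C\in((\psd{d})_k)^{*}$: for any $S\in\comb{[d]}{k}$, up to a simultaneous row/column permutation $\Trn{S}^d(C)$ is block diagonal with one block the principal submatrix of $B$ indexed by $S\cap[k+1]$ (of size $\le k$, hence PSD by the previous sentence) and the other block zero, so $\Trn{S}^d(C)\in\psd{k}$. On the other hand, taking $S=\{1,\dots,k+1\}\in\comb{[d]}{k+1}$ gives $\Trn{S}^d(C)=B\notin\psd{k+1}$, so $C\notin((\psd{d})_{k+1})^{*}$. Hence $((\psd{d})_{k+1})^{*}\subsetneq((\psd{d})_{k})^{*}$ for every $k$, and since equal primal cones would force equal dual cones, also $(\psd{d})_{k}\subsetneq(\psd{d})_{k+1}$ for every $k$.

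The only step that needs a little care is the verification $C\in((\psd{d})_k)^{*}$, i.e.\ the block-diagonal reduction for the principal submatrices of the zero-padded lift $\Aug{1\cdots(k+1)}^d(B)$; everything else is an elementary spectral computation. I would also remark that the same $B$ hands back primal witnesses: $\Aug{1\cdots(k+1)}^d(\mathbf{1}_{k+1}\mathbf{1}_{k+1}^{\top})$ lies in $(\psd{d})_{k+1}$ by definition, but the pairing $\langle\,\mathbf{1}_{k+1}\mathbf{1}_{k+1}^{\top},B\,\rangle=k(k+1)-(k+1)^2=-(k+1)<0$ against the element $B$ of $((\psd{d})_k)^{*}$ shows it is not in $(\psd{d})_k$; this matches the classical fact that these higher order cones are the factor-width-$k$ cones and that the factor-width stratification is strict. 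I prefer the dual argument above because it disposes of both chains simultaneously and requires no closedness discussion.
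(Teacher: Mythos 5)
Your proof is correct, and it follows the same overall skeleton as the paper's: establish the weak inclusions from Proposition~\ref{proposition: koc}, exhibit an explicit matrix separating consecutive dual cones, and transfer strictness back to the primal chain. The differences are in the details, and they mostly favor your version. The paper's dual witness is the arrowhead matrix with $\sqrt{k-1}$ on the diagonal and all-ones off the first row and column, asserted without verification to lie in $((\psd{d})_k)^*\setminus((\psd{d})_{k+1})^*$; your witness $\Aug{1\cdots(k+1)}^d(kI_{k+1}-\mathbf{1}\mathbf{1}^{\top})$ is checked completely, including the block-diagonal reduction for principal submatrices of the zero-padded lift, which is exactly the point one should not wave at. More substantively, the paper passes from dual strictness to primal strictness by first proving $((\psd{d})_k)^{**}=(\psd{d})_k$ via the Krein--Rutman theorem (closedness of the sum of the lifted cones under an interior-point condition), whereas you observe that the elementary contrapositive ``equal cones have equal duals'' already suffices, so no closedness or bidual discussion is needed; and your closing remark even supplies a direct primal witness, since $\langle \Aug{1\cdots(k+1)}^d(\mathbf{1}\mathbf{1}^{\top}), C\rangle=-(k+1)<0$ shows $\Aug{1\cdots(k+1)}^d(\mathbf{1}\mathbf{1}^{\top})\in(\psd{d})_{k+1}\setminus(\psd{d})_k$ without any duality bookkeeping at all. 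What the paper's heavier route buys is the identity $((\psd{d})_k)^{**}=(\psd{d})_k$ itself, which is of independent interest (it shows these factor-width cones are closed), but it is not needed for the strictness statement being proved.
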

\begin{proof} Both inclusion are easy consequences of first and second item of Proposition \ref{proposition: koc}. We now prove the inclusion is strict. 
We first prove that the strict inclusion for the dual cones.  Denote $\mathbf{1}_d = (\underbrace{1, \dots,1}_{d \text{ copies}})^\tp $ and $I_d$ be the identity matrix in $\mathbb{S}^d$. The matrix 
\[
\begin{bmatrix} 

\sqrt{k-1} & \mathbf{1}^\tp_{d-1} \\
\mathbf{1}_{d-1}^\tp  & \sqrt{k-1}I_{d-1},\\
\end{bmatrix}
\]
is always in $((\psd{d})_k)^*$ but not in $((\psd{d})_{k+1})^*$.
 
Since $((\psd{d})_k)^*  = \cap_{(i_1,\dots,i_k) \in {\comb{[d]}{k}}}K_{i_1 \cdots i_k}$ where \[ K_{i_1 \cdots i_k} = \{A\in \mathbb{S}^{d} : \Trn{i_1 \cdots i_k} (A) \in\psd{k} \},\] $(K_{i_1 \cdots i_k})^* = \Aug{i_1 \cdots i_k} (\psd{k})$ and the identity matrix $I\in \interior(K_{i_1 \cdots i_k})$ for all $(i_1,\dots,i_k) \in {\comb{[d]}{k}}$, the Krein-Rutman Theorem \cite[Corollary 3.3.13]{borwein2010convex} implies that 
\[((\psd{d})_k)^{**} = \sum_{i_1 \cdots i_k} \Aug{i_1 \cdots i_k} (\psd{k}) = (\psd{d})_k .\]
Thus strict inclusion in the dual cones implies the strict inclusion in the cones $(\psd{d})_k$. The equality $ (\psd{d})_d=\psd{d}= ((\psd{d})_d)^*$ is because $\psd{d}$ is self-dual. 
\end{proof}
So far we have mostly dealing with index set $J_k = \comb{[d]}{k}$. By changing the index $J_k$ of the $k$th order cone, we obtain new cones and new conic program. In real problems, the choice of the subset $J_k$ of $\comb{[d]}{k}$ represents some prior knowledge of the problem. The corresponding higher order cone  and dual higher order cone prorgam can enojoy less computational budget because of the smaller size of $J_k$. This has been explored in the literature of chordal structure of SDP \cite{waki2006sums,de2010exploiting}. 

\section{Sum-of-squares cone}

A real coefficient polynomial $p(x)$ is a sum-of-square ($\SOS$) if it can be written as $p(x)= \sum_{i=1}^mq_i^2(x)$ for some polynomial $q_i$. It is clear that the set of sum of square polynomials form a convex cone. 

It is well-known that a polynomial  with $n$ variable and degree $2d$ is a sum of square if and only if there exists a positive semidefinite symmetric $A$ such that 
\[
p(x)=m(x)^\tp Am(x)
\]
where $m(x)$ is the vector of all monomials (so in total ${n+d\choose d}$ tuples) that have degree less than or equal to $d$ \cite{parrilo2000structured}. Due to this equivalence and our previous discussion on $k$OCP induced by $\psd{k}$, we define the following $k\ddsos$. 

\begin{definition}
Let $i_1,\dots,i_k \in \{ 1,2,\dots, {n+d\choose d}\}$.  A polynomial $p$ is $k$th-diagonally-dominant-sum-of-squares ($k\ddsos$) if it can be written as 
		\[
		p= \sum_{i_1 \cdots i_k} \sum_{j}\biggl (\sum_{l=1}^k \alpha^{i_1 \cdots i_k}_{ji_l}m_{i_l} \biggr)^2 
		\]
		for some monomials $m_{i_l}$ and some constants $ \alpha^{i_1 \cdots i_k}_{ji_l} \in \mathbb{R}$.
\end{definition}
 It directly follows from the definition that a polynomial $p$ (with $n$ variable and $2d$ degree) is SOS if and only if it is ${n+d\choose d} \ddsos$. The cases $k=1,2$ has been explored intensively in \cite{ahmadi2017dsos} under the name DSOS and SDSOS. 
 
 In the definition, we did not require $i_1<\dots<i_k$ as we did in defining $k$OC. We show in the following lemma that this requirement is not necessary. 
 
 \begin{lemma}\label{lm2}
 	Suppose  the monomials having $n$ variables with degree less than or equal to $d$ are indexed by $\{1,2,\dots,{n+d\choose d}\}$ according to some order. A polynomial $p$ with degree $2d$ , $n$ variables is $k\ddsos$ if and only if it can be written as 
 	\[
 	p= \sum_{(i_1,\dots, i_k)\in{ \comb{[n+d]}{ d}}} \sum_{j=1}^k \biggl(\sum_{l=1}^k \alpha^{i_1 \dots i_k}_{ji_l}m_{i_l} \biggr)^2, 
 	\]
 	where $(m_{i_l})_{l=1}^k$ are different for different $(i_l)_{l=1}^k$. 
 \end{lemma}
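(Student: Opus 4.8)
The plan is to read Lemma~\ref{lm2} as the polynomial translation of the definition of the higher-order cone $(\psd{N})_k$, where $N := \binom{n+d}{d}$; fix once and for all an enumeration $m_1,\dots,m_N$ of the monomials in $n$ variables of degree at most $d$, and let $m(x) = (m_1,\dots,m_N)^{\tp}$ be the associated monomial vector. One direction is immediate: the displayed ``canonical'' representation — the sum taken over increasing $k$-tuples $(i_1,\dots,i_k)\in\comb{[N]}{k}$, exactly $k$ squares per tuple, and the monomials attached to $(i_1,\dots,i_k)$ being $m_{i_1},\dots,m_{i_k}$ — is a special case of the definition of $k\ddsos$ (allowing zero coefficients for the remaining index tuples and squares). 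So the content of the lemma is that every $k\ddsos$ polynomial can be brought into this canonical shape.

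First I would normalise an arbitrary $k\ddsos$ representation of $p$. Read with the fixed enumeration above, each monomial $m_{i_l}$ occurring is one of $m_1,\dots,m_N$ (if instead one insists on reading ``monomials'' as a priori arbitrary, a standard comparison of top-degree homogeneous parts, as in \cite{parrilo2000structured}, rules out degree $>d$ because $\deg p\le 2d$). Inside each square $\bigl(\sum_{l}\alpha_{j i_l}m_{i_l}\bigr)^2$ I would merge repeated monomials, so that each square becomes the square of a linear combination of \emph{distinct} coordinates of $m(x)$, at most $k$ of them; enlarging that index set to some $\{i_1<\dots<i_k\}\in\comb{[N]}{k}$ lets me regard the square as attached to that tuple, with zero coefficients on the unused indices.

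Next, for each fixed tuple $(i_1,\dots,i_k)$ I would collect all the squares attached to it. Writing $v_{(i)}(x) := (m_{i_1}(x),\dots,m_{i_k}(x))^{\tp}$, the sum of those squares has the form $v_{(i)}^{\tp} B\, v_{(i)}$ for some $B\in\psd{k}$ (a sum of rank-one positive semidefinite matrices). Since $B$ is $k\times k$ and positive semidefinite it has rank at most $k$, so $B=\sum_{j=1}^{k} w_j w_j^{\tp}$ after padding with zero vectors, and the contribution of the tuple is $\sum_{j=1}^{k}\bigl(w_j^{\tp}v_{(i)}\bigr)^2$: exactly $k$ squares, each a linear combination of the distinct monomials $m_{i_1},\dots,m_{i_k}$. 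Summing over all tuples gives precisely the canonical form of the lemma, with $\alpha^{i_1\cdots i_k}_{j i_l} := (w_j)_l$. Conceptually this is just the image under $A\mapsto m(x)^{\tp}A\,m(x)$ of the identity $(\psd{N})_k = \sum_{(i_1,\dots,i_k)\in\comb{[N]}{k}}\Aug{i_1 \cdots i_k}^{N}(\psd{k})$ combined with the rank bound for positive semidefinite matrices.

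I do not expect a genuine obstacle: the argument is largely bookkeeping to line up the loose indexing in the definition of $k\ddsos$ — arbitrary, possibly repeated monomials and an unbounded number of squares — with the tidy canonical indexing. The only two steps that need more than rewriting are the top-degree argument (needed only under the most literal reading of ``monomials'') and the reduction to exactly $k$ squares per tuple, which is the eigendecomposition/rank bound for $k\times k$ positive semidefinite matrices.
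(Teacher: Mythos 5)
Your proposal is correct and follows essentially the same route as the paper: both reduce an arbitrary $k\ddsos$ representation to the canonical indexing by collecting, for each increasing $k$-tuple, all attached squares into a $k\times k$ positive semidefinite Gram matrix and then refactoring it into at most $k$ rank-one terms (the paper via a Cholesky factorization $D^{\tp}D$, you via the eigendecomposition/rank bound — an immaterial difference). The bookkeeping steps (merging repeated monomials, padding with zero terms) likewise match the paper's argument.
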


\begin{proof}
	It is easy to see a polynomial can be written in the above form is a $k\ddsos$.
	
	Now suppose $p$ is a $k\ddsos$, by rearrange the brackets and adding $0$ terms if necessary, we could write $p$ in the form 
	\[
	p= \sum_{(i_1,\dots, i_k)\in\comb{[n+d]}{ d}} \sum_{j} \biggl(\sum_{l=1}^k \alpha^{i_1 \cdots i_k}_{ji_l}m_{i_l}\biggr )^2, 
	\]
	where  $m_{i_l}$s are different when $i_l$s are not equal. So the thing left to do is to make sure there are $k$ brackets in the second sum, i.e., the sum over $j$. Since 
	\[
	(\sum_{l=1}^k \alpha^{i_1 \cdots i_k}_{ji_l}m_{i_l} )^2 =m_{i_1 \cdots i_k}^\tp  (\alpha^{i_1 \cdots i_k}_j)^\tp \alpha^{i_1 \cdots i_k}_j m_{i_1 \cdots i_k},
	\]
	where $m_{i_1 \cdots i_k} = (m_{i_1},\dots, m_{i_k}),\alpha^{i_1 \cdots i_k}_j=(\alpha^{i_1\dots  i_k}_{ji_1},\dots, \alpha^{i_1 \cdots i_k}_{ji_k}).$ The sum 
	$\sum_j (\alpha^{i_1 \cdots i_k}_j)^\tp \alpha^{i_1 \cdots i_k}_j $ is still a non-negative definite matrix and thus has a Cholesky decomposition,i.e.,   $\sum_j (\alpha^{i_1 \cdots i_k}_j)^\tp \alpha^{i_1 \cdots i_k}_j = D^\tp D$. This means 
	\[
	p= \sum_{i_1 \cdots i_k}( m_{i_1 \cdots i_k}D)^\tp  D(m_{i_1 \cdots i_k}),
	\]
	which shows there can be exactly $k$ brakets in the second sum. 
\end{proof}

The following theorem connects our $k\ddsos$ polynomial with our $k$th order cone induced by $\psd{k}$.

\begin{theorem}\label{theorem: sos}
	A polynomial $p$ of degree $2d$ with $n$ variables is $k\ddsos$ if and only if it admits a representation as $p(x)=m^\tp (x)Am(x)$, where $m(x)$ is the standard monomial vector of degree $d$ (so in total ${n+d\choose d}$ tuples with different entries), and $A \in (\psd{h})_k$ for some $h\leq {n+d\choose d}$.
\end{theorem}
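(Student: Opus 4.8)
The plan is to read off both implications directly from the definitions, using Lemma~\ref{lm2} to normalize a $k\ddsos$ representation. Write $N=\comb{n+d}{d}$ and let $m(x)=(m_1(x),\dots,m_N(x))^\tp$ be the standard monomial vector of degree $d$. The single bookkeeping identity that makes everything work is: for any increasing tuple $(i_1,\dots,i_k)\in\comb{[N]}{k}$ and any $M\in\mathbb{S}^k$,
\[
(m_{i_1}(x),\dots,m_{i_k}(x))\,M\,(m_{i_1}(x),\dots,m_{i_k}(x))^\tp \;=\; m(x)^\tp\,\Aug{i_1 \cdots i_k}^N(M)\,m(x),
\]
which is immediate from the definition of $\Aug{i_1 \cdots i_k}^N$, since that lift places $M$ in the rows and columns indexed by $i_1,\dots,i_k$ and zeros elsewhere. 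I would also note at the outset that it suffices to treat $h=N$: if $A\in(\psd{h})_k$ with $h<N$, then, because $\comb{[h]}{k}\subseteq\comb{[N]}{k}$, padding $A$ with zero rows and columns produces a matrix still lying in $(\psd{N})_k$, and this padding does not change the polynomial $m(x)^\tp A m(x)$; this is essentially the membership/inheritance content of Proposition~\ref{proposition: koc}.

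For the ``only if'' direction, assume $p$ is $k\ddsos$. By Lemma~\ref{lm2} we may write $p=\sum_{(i_1,\dots,i_k)\in\comb{[N]}{k}}\sum_{j=1}^{k}(\sum_{l=1}^{k}\alpha^{i_1 \cdots i_k}_{j i_l}m_{i_l})^2$. Fix a tuple $(i_1,\dots,i_k)$, set $\beta_j:=(\alpha^{i_1 \cdots i_k}_{j i_1},\dots,\alpha^{i_1 \cdots i_k}_{j i_k})^\tp\in\mathbb{R}^k$ and $M^{i_1 \cdots i_k}:=\sum_{j=1}^{k}\beta_j\beta_j^\tp\in\psd{k}$; then the inner double sum equals $(m_{i_1},\dots,m_{i_k})\,M^{i_1 \cdots i_k}\,(m_{i_1},\dots,m_{i_k})^\tp$. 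Applying the bookkeeping identity to each tuple and summing yields $p(x)=m(x)^\tp A m(x)$ with $A:=\sum_{(i_1,\dots,i_k)}\Aug{i_1 \cdots i_k}^N(M^{i_1 \cdots i_k})$, and $A\in(\psd{N})_k$ by the very definition of the $k$th order cone. This is the desired representation (with $h=N$).

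For the ``if'' direction, assume $p(x)=m(x)^\tp A m(x)$ with $A\in(\psd{h})_k$; by the opening remark we may take $h=N$. Unfolding the definition, $A=\sum_{(i_1,\dots,i_k)\in\comb{[N]}{k}}\Aug{i_1 \cdots i_k}^N(M^{i_1 \cdots i_k})$ with each $M^{i_1 \cdots i_k}\in\psd{k}$, so the bookkeeping identity gives $p=\sum_{(i_1,\dots,i_k)}(m_{i_1},\dots,m_{i_k})\,M^{i_1 \cdots i_k}\,(m_{i_1},\dots,m_{i_k})^\tp$. Each $M^{i_1 \cdots i_k}\in\psd{k}$ factors as $M^{i_1 \cdots i_k}=\sum_{j=1}^{k}\beta^{i_1 \cdots i_k}_j(\beta^{i_1 \cdots i_k}_j)^\tp$ for suitable $\beta^{i_1 \cdots i_k}_j\in\mathbb{R}^k$ (for instance from the spectral decomposition, padding with zero vectors when $\rank M^{i_1 \cdots i_k}<k$), whence the corresponding summand is $\sum_{j=1}^{k}(\sum_{l=1}^{k}(\beta^{i_1 \cdots i_k}_j)_l\,m_{i_l})^2$. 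Setting $\alpha^{i_1 \cdots i_k}_{j i_l}:=(\beta^{i_1 \cdots i_k}_j)_l$ exhibits $p$ in the shape of the $k\ddsos$ definition (indeed in the normalized form of Lemma~\ref{lm2}), so $p$ is $k\ddsos$.

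The steps are all elementary, so I do not anticipate a genuine obstacle. The only points that need care are reconciling the two index conventions --- the $k\ddsos$ definition lets the indices $i_1,\dots,i_k$ be arbitrary and even repeated, so Lemma~\ref{lm2} is essential for descending to increasing $k$-tuples in $\comb{[N]}{k}$, which is exactly the index set used to build $(\psd{N})_k$ --- and the harmless dimension mismatch when $h<N$, which the zero-padding remark disposes of.
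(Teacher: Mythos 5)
Your proof is correct and follows essentially the same route as the paper's: in both directions you decompose $A$ into lifted positive semidefinite $k\times k$ blocks, factor each block (you use a sum of rank-one terms where the paper uses a Cholesky factor $N_{i_1\cdots i_k}^\tp N_{i_1\cdots i_k}$, an immaterial difference), and match squares of linear forms in $m_{i_1},\dots,m_{i_k}$ against the normalized $k\ddsos$ representation supplied by Lemma~\ref{lm2}. Your explicit bookkeeping identity and the zero-padding remark for $h<N$ are points the paper leaves implicit, but the argument is the same.
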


\begin{proof}
	If $p$ admits a representation
		\[
		p =m^\tp Am,
		\]
		where $A\in (\psd{h})_k$ for some $h$ and $m$ is the vector of all monomials with degree less than $d$. Since $A\in (\psd{h})_k$, $A$ has the decomposition $A = \sum_{(i_1,\dots, i_k)\in \comb {[n+d]}{d}}M^{i_1 \cdots i_k}$. $M^{i_1 \cdots i_k}$ are zero except for those $(i,j), i,j\in \{i_1,\dots,i_k\}$ entries. $\Trn{i_1 \cdots i_k}^h(M ^{i_1 \cdots i_k})$ are positive semi-definite and thus has the Cholesky decomposition $\Trn{i_1 \cdots i_k}^h(M^{i_1 \cdots i_k}) = N_{i_1 \cdots i_k}^\tp  N_{i_1 \cdots i_k}$. Thus, we have 
		\begin{align*}
		p & = m^\tp Am \\ 
		& = \sum_{i_1 \cdots i_k} m^\tp  M^{i_1 \cdots i_k}m \\
		& = \sum_{i_1 \cdots i_k} \begin{bmatrix}
		m_{i_1} & \dots & m_{i_k} 
		\end{bmatrix} \Trn{i_1 \cdots i_k}^d(M^{i_1 \cdots i_k})\begin{bmatrix}
		m_{i_1} \\ \vdots \\ m_{i_k}
		\end{bmatrix}\\
		&  =\sum_{i_1 \cdots i_k}  (\begin{bmatrix}
		m_{i_1} & \dots & m_{i_k} 
		\end{bmatrix} N_{i_1 \cdots i_k}^\tp ) \Biggl(N_{i_1 \cdots i_k}\begin{bmatrix}
		m_{i_1} \\ \vdots \\ m_{i_k}\end{bmatrix}\Biggr)
		\end{align*}
The last expression shows that $p$ is a $k\ddsos$.
		
Now if $p$ is a $k\ddsos$, as shown in lemma \ref{lm2}, we could write 
		\[
		p =  \sum_{i_1 \cdots i_k} \sum_{j=1}^k \biggl(\sum_{l=1}^k \alpha^{i_1 \cdots i_k}_{ji_l}m_{i_l} \biggr)^2,
		\]
		where $m_{i_l}$ are different for different $i_l$.
This gives our
		\[N_{i_1 \cdots i_k} = \begin{bmatrix}
		\alpha_{1i_1}^{i_1 \cdots i_k} & \dots & \alpha_{1i_k}^{i_1 \cdots i_k} \\
		\alpha_{2i_1} ^{i_1 \cdots i_k}&  \dots  & \alpha_{2i_k}^{i_1 \cdots i_k}\\
		\vdots & \dots & \vdots \\
		\alpha_{ki_1}^{i_1 \cdots i_k} & \dots & \alpha_{ki_k}^{i_1 \cdots i_k}
		\end{bmatrix}.
		\]
We then can construct $M^{i_1 \cdots i_k}$ and $A$.
\end{proof}
We define the corresponding $k\ddsos$ program here.
\begin{definition} 
	Denote the cone of $k\ddsos$ with degree $2d$ and $n$ variables as $k\SOS_{n,d}$. We call the following optimization $k\ddsos$ programming.
	\begin{equation}\label{program: kDSOS}
	\begin{aligned} 
	& \underset{u\in \mathbb{R}^l}{\text{minimize}} & & r^\tp u  \\
	& \text{subject to} &  &r_{0,t}+ r_{1,t}(x)u_1+\dots +r_{s_t,t}(x)u_{s_t} \in k\SOS_{n_t,d_t}, t = 1,2,\dots, N,\\  
	& & &  
	\end{aligned}
	\end{equation}
	where $r(x)$s are given polynomials and $n_t,d_t$ depends on $r(x)$. $n_t$ is the total number of variables of $r(x)$ in the same inequality. $d_t$ is half the highest degree of $r(x)$ in the same inequality.
\end{definition}

To link to our previous discussion of $k$OCP induced by $\psd{k}$, we show that these two programs are equivalent. 
\begin{theorem}
	$k\ddsos$ programming is equivalent to $(\psd{d})_k$ cone programming ($k$OCP induced by $\psd{k}$).
\end{theorem}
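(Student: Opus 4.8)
The plan is to show equivalence in both directions by translating between the polynomial constraints of a $k\ddsos$ program \eqref{program: kDSOS} and the matrix-inequality constraints of a $k$OCP induced by $\psd{k}$, using Theorem~\ref{theorem: sos} as the dictionary at the level of a single constraint. First I would handle the direction $k\ddsos \Rightarrow k$OCP. A single constraint $r_{0,t} + r_{1,t}(x)u_1 + \dots + r_{s_t,t}(x)u_{s_t} \in k\SOS_{n_t,d_t}$ should, by Theorem~\ref{theorem: sos}, be rewritten as the statement that there exists $A_t \in (\psd{h_t})_k$ with $m_t(x)^\tp A_t m_t(x)$ equal to the given polynomial, where $m_t$ is the degree-$d_t$ monomial vector. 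Matching coefficients of the monomials on both sides is a system of \emph{linear} equations in the entries of $A_t$ and the decision variables $u$; this is exactly a linear matrix inequality of the form appearing in \eqref{program:kocpinequality}, after absorbing the $u_j$ as the scalar variables $x_j$ there and the coefficient-matching equations into the affine map $x \mapsto x_1P_1 + \dots + x_kP_k + P_0$. Stacking the $N$ constraints into one block-diagonal LMI (using the Membership property, item (iii) of Proposition~\ref{proposition: koc}, to combine several $(\psd{h_t})_k$ constraints into a single $(\psd{d})_k$ constraint over the direct sum $d = \sum_t h_t$) and using Theorem~\ref{thm:equivalencesdfineq} to pass between inequality and standard form gives a $(\psd{d})_k$ cone program with the same feasible set and objective.

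For the converse direction, $k$OCP $\Rightarrow k\ddsos$, I would start from a $(\psd{d})_k$ program in, say, inequality form: $x_1P_1 + \dots + x_kP_k + P_0 \in (\psd{d})_k$. The idea is to pick $n$ and $d$ with ${n+d \choose d} \ge d$ (e.g.\ a single variable $n=1$ already gives $\binom{1+d}{d} = d+1$ monomials $1, x, x^2, \dots, x^d$), let $m(x)$ be the monomial vector, and associate to the affine matrix pencil the polynomial $p_x(t) = m(t)^\tp (x_1P_1 + \dots + x_kP_k + P_0) m(t)$, which depends linearly on $x$. By Theorem~\ref{theorem: sos}, $x_1P_1 + \dots + x_kP_k + P_0 \in (\psd{d})_k$ \emph{iff} $p_x$ is $k\ddsos$ — except one must be slightly careful: distinct monomials in $m$ may have coinciding products $m_i m_j$ (e.g.\ $x \cdot x^3 = x^2 \cdot x^2$), so the map from a symmetric matrix to its associated polynomial is not injective. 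I would resolve this by choosing the monomial indexing so that the relevant products are distinct — using more variables if necessary — or, alternatively, by padding: replace $\mathbb{S}^d$ with $\mathbb{S}^{{n+d \choose d}}$, extending the $P_i$ by zeros (which is the lift operator $\Aug{}{}$, and preserves membership in the $k$th order cone by the \ep), so that the matrix is recovered unambiguously from the polynomial. Then the $k$OCP becomes a $k\ddsos$ program in the variables $x$, and Theorem~\ref{thm:equivalencesdfineq} handles the standard-form case.

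The main obstacle I anticipate is precisely this non-injectivity of the monomial-Gram map and the bookkeeping around the dimension ${n+d \choose d}$: ensuring that when we go from a matrix cone $(\psd{d})_k$ of a \emph{given} size $d$ to a polynomial cone, the polynomial faithfully encodes the matrix constraint, and conversely that the "for some $h \le {n+d \choose d}$" in Theorem~\ref{theorem: sos} can be pinned down to the exact block size we need. The clean fix is to observe that $(\psd{h})_k \hookrightarrow (\psd{h'})_k$ for $h \le h'$ via the lift operator (nestedness / \ep), so we always have freedom to enlarge, and to choose the number of variables $n$ large enough that the degree-$\le d$ monomials needed have pairwise-distinct products; the equality of feasible sets and objective values then follows by construction. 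Everything else — linearity of coefficient matching, block-diagonal aggregation via Proposition~\ref{proposition: koc}(iii), and the inequality/standard-form interchange via Theorem~\ref{thm:equivalencesdfineq} — is routine.
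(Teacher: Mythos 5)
Your proposal is correct and follows essentially the same route as the paper: both directions reduce to Theorem~\ref{theorem: sos} via coefficient matching, with Proposition~\ref{proposition: koc} and Theorem~\ref{thm:equivalencesdfineq} handling aggregation and the form interchange. The injectivity obstacle you flag is real, and the paper resolves it exactly by your ``more variables'' fix, encoding $U\in(\psd{d})_k$ as $\sum_{i,j} x_i x_j u_{ij}\in k\SOS_{d,1}$ with degree-one monomials in $d$ variables so that the Gram matrix is unique (note that your alternative of zero-padding alone would not restore injectivity).
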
 
 \begin{proof}
	We first show how to reduce $(\psd{d})_k$ cone program to $k\ddsos$ program: 
		
		We may suppose $(\psd{d})_k$ program is in its standard form, i.e., the form in \eqref{program:kocpstandard} (the equivalence between standard form and inequality form for $(\psd{d})_k$ can be proved via standard techniques). To avoid confusion, suppose $U$ is the variable matrix  in $(\psd{d})_k$ cone program. 
		
		Then our $r$ in $k\ddsos$ program \eqref{program: kDSOS} is just $\text{vec}(A_0)$. The linear equality can be  incorporated into a $k\ddsos$ inequality by let $r$s in the inequality in \eqref{program: kDSOS} be constant and matches $(D_i)_{jk},-(D_i)_{jk}$ as the following 
		\[
		\tr(D_iU)=f_i \quad \iff \quad -f_i+\sum_{jk} (D_{i})_{jk}u_{ij}   \in k\SOS_{1,1} \text{ and } f_i+\sum_{jk} -(D_{i})_{jk}u_{ij} \in k\SOS_{1,1}.
		\]
		
		The condition $U\in( \psd{d})_k$ is the same as 
		\[
		\sum_{1\leq i,j\leq d} x_i x_j u_{ij} \in k\SOS_{d,1}
		\]
		by Theorem~\ref{theorem: sos}.
		
	Next we show how to reduce $k\ddsos$ program to $(\psd{d})_k$ cone program in its inequality form.
		The objective is the same for both program.
		
		The constraint $p_t(x)=r_{0,t}(x)+ r_{1,t}(x)u_1+\dots +r_{s_t,t}(x)u_k \in k\SOS_{n_t,d_t}$ is the same as there is one $A=[a_{ij}]_{ij}\in (\psd{h})_k$ for some $h$ such that $p_t(x)= m^\tp Am$. Thus  
		\[
		p_t(x)=r_{0,t}(x)+ r_{1,t}(x)u_1+\dots r_{s_t,t}(x)u_t \in k\SOS_{n_t,d_t}
		\]
		if and only if there exists
		\[
		A \in (\psd{n_t+d_t\choose d_t})_k,\quad \text{and linear constrants on}\,a_{ij},u_{i},
		\]
		where the linear constrants come from matching coefficients of $p_t(x)=m^\tp Am =r_{0,t}+ r_{1,t}(x)u_1+\dots+ r_{s_t,t}(x)u_t $. The condition $A\in (\psd{n_t+d_t\choose d_t})_k$ is a $k$OC constraint and we could add  variable $a_{ij}$ to $k$OCP. This shows the other direction.

\end{proof}

\section{Completely positive cone and copostive cone}

Recall the following definition of completely positive matrices and copositive matrices: 
\begin{itemize}
\item The set of copositive matrices with dimension $d$, $\cp{d}$:
\[
\cp{d} :\,=\{M\in \mathbb{S}^d  : x^\tp Mx \geq 0  \text{ for all } x\in \mathbb{R}^d_+\}.
\]
\item The set of complete positive matrices with dimension $d$, $\cpp{d}$:
\[
\cpp{d}\coloneqq\{ B^\tp B : B\in \mathbb{R}^{m\times d}_+,\; m \text{ is an integer} \}.
\]
\end{itemize}

These two cones satisfy the \ep throughly by verifying the definition directly. The corresponding copositive programming and copositive programming gives a lot modeling power in combinatorics and nonconvex problems \cite{dur2010copositive,burer2015gentle}. However, these programs are NP-hard to solve in general. 

Using the construction of $k$OCP induced by $\cp{k}$ or $\cpp{k}$, for $k=1,2,3,4$, we are able to solve the the inner approximation of copositive programming and copositive programming. The case $k=2$ of $\cpp{k}$ has been explored in \cite{bostanabad2018inner}. 
\begin{theorem}\label{thm: cpcpptohocp}
	$2,3,4$-OCP with index set $J$ induced by $\cp{k}$ or $\cpp{k}$ can be casted into $2,3,4$-OCP induced by $\psd{k}$. 
\end{theorem}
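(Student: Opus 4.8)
The plan is to reduce the theorem to the classical structural fact that, in dimensions at most $4$, the completely positive and copositive cones are obtained from $\psd{k}$ by imposing (resp.\ adding) entrywise sign constraints. Writing $\nn{k}$ for the polyhedral cone of symmetric $k\times k$ matrices with all entries nonnegative, it is classical (see, e.g., \cite{dur2010copositive}) that for $k\in\{1,2,3,4\}$
\[
\cpp{k}=\psd{k}\cap\nn{k}\qquad\text{and}\qquad \cp{k}=\psd{k}+\nn{k}.
\]
These two identities are the only facts about $\cpp{}$ and $\cp{}$ the argument uses; everything else is bookkeeping with the lift operators $\Aug{\cdot}^d$, the membership property in Proposition~\ref{proposition: koc}(iii), and Lemma~\ref{lemma: geqzerokoc}.

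First I would unwind the higher-order cones. By definition $(\cpp{d})_k(J)=\sum_{s\in J}\Aug{s}^d(\cpp{k})$, so substituting $\cpp{k}=\psd{k}\cap\nn{k}$ shows that $M\in(\cpp{d})_k(J)$ if and only if there are matrices $\{M_s\}_{s\in J}$ with $M_s\in\psd{k}$, $M_s$ entrywise nonnegative, and $M=\sum_{s\in J}\Aug{s}^d(M_s)$. Likewise, by linearity of $\Aug{s}^d$,
\[
(\cp{d})_k(J)=\sum_{s\in J}\Aug{s}^d(\psd{k})+\sum_{s\in J}\Aug{s}^d(\nn{k})=(\psd{d})_k(J)+\mathcal{N}_J,
\]
where $\mathcal{N}_J$ is the polyhedral cone of symmetric $d\times d$ matrices that are entrywise nonnegative on $\bigcup_{s\in J}(s\times s)$ and vanish off it. In both descriptions the positive semidefinite part is literally a $k$th order cone induced by $\psd{k}$ (note $\psd{k}=(\psd{k})_k$), while the nonnegative part is a finite list of linear inequalities, hence by Lemma~\ref{lemma: geqzerokoc} is again a $k$th order cone induced by $\psd{k}$ — a diagonal block constrained to lie in the nonnegative orthant, which equals $(\psd{n_0})_k$ for a suitable $n_0$.

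Second I would assemble the reformulated program. Starting from a $k$OCP in standard form whose conic variable $X$ is constrained to $(\cpp{d})_k(J)$ (resp.\ $(\cp{d})_k(J)$), I introduce the auxiliary matrices $M_s$, $s\in J$ (resp.\ a splitting $X=P+N$ with $P\in(\psd{d})_k(J)$, $N\in\mathcal{N}_J$), together with slack vectors holding copies of the entries on which sign constraints are imposed; I add the linear equalities that express $X$ through these new variables and that tie the slacks to the corresponding entries; and I rewrite the objective and the original linear constraints through those equalities. The new conic variable is the block-diagonal matrix collecting the $M_s$ (or $P$) and the slack vectors, and by Proposition~\ref{proposition: koc}(iii) its cone $\diag\bigl((\psd{k})_k,\dots,(\psd{k})_k,(\psd{n_0})_k\bigr)$ equals a single $k$th order cone $(\psd{D})_k(J')$ with $D=k\lvert J\rvert+n_0$ and an explicit index set $J'$ (padding with zero blocks if $n_0<k$). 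Hence the reformulation is again a $k$OCP induced by $\psd{k}$, with the same feasible set and optimal value. Moving between standard and inequality forms is free, since $\psd{}$, $\cp{}$, and $\cpp{}$ all satisfy the \ep thoroughly and Theorem~\ref{thm:equivalencesdfineq} applies; the dual programs are handled identically using $(\cpp{k})^*=\cp{k}$, $(\cp{k})^*=\cpp{k}$, $(\psd{k})^*=\psd{k}$ together with the dual description in Proposition~\ref{proposition: koc}(ii).

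I expect the only real obstacle to be expository: choosing the ambient size $D$ and the index set $J'$ of the assembled cone $(\psd{D})_k(J')$, and verifying that the entrywise-nonnegativity constraints — which involve off-diagonal entries of the $M_s$ — are correctly linked to the $\psd{k}$ blocks by the added linear equalities. The one genuinely mathematical ingredient, the dimension-$\le4$ identities $\cpp{k}=\psd{k}\cap\nn{k}$ and $\cp{k}=\psd{k}+\nn{k}$, is classical, and it is exactly here that the restriction to $k\le4$ is forced: both identities fail for $k=5$ (for instance $\cp{5}\supsetneq\psd{5}+\nn{5}$, as witnessed by the Horn matrix).
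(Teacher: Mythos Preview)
Your proposal is correct and follows essentially the same route as the paper: both arguments hinge on the classical Maxfield--Minc identities $\cpp{k}=\psd{k}\cap\nn{k}$ and $\cp{k}=\psd{k}+\nn{k}$ for $k\le 4$, then unwind the defining sum $\sum_{s\in J}\Aug{s}^d(\cdot)$ to split each small block into a $\psd{k}$ part and an entrywise-nonnegativity part, the latter being absorbed into the $\psd{}$-induced $k$OCP via Lemma~\ref{lemma: geqzerokoc}. Your write-up is somewhat more explicit about assembling the block-diagonal variable and invoking Proposition~\ref{proposition: koc}(iii), and you add remarks on duals and on why $k=5$ fails, but the substance is the same.
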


The theorem is mainly due to the following lemma:
\begin{lemma}\label{lemma: cppcppsd} \cite{maxfield1962matrix} 
	Denote $\nn{k} =  (\mathbb{R}^{k\times k}_+)\cap \mathbb{S}^k$, we have for $k=1,2,3,4$
\[\cp{k} = \psd{k} + \nn{k} ,\quad \cpp{k}  = \psd{k} \cap \nn{k}.\]
\end{lemma}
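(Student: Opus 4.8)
The plan is to dispose first of the inclusions that hold in every dimension, then collapse the two identities into a single hard inclusion by conic duality, and finally establish that hard inclusion for $k\le 4$ by induction on the comparison graph of the matrix. The easy inclusions, valid for all $k$, are immediate from the definitions: if $M=P+N$ with $P\in\psd{k}$ and $N\in\nn{k}$, then for every $x\ge 0$ we have $x^\tp Mx=x^\tp Px+x^\tp Nx\ge 0$, the first term because $P$ is PSD and the second because $N$ has nonnegative entries and $x\ge 0$; hence $\psd{k}+\nn{k}\subseteq\cp{k}$. Dually, if $M=B^\tp B$ with $B\in\mathbb{R}^{m\times k}_+$ then $M\in\psd{k}$ and each $M_{ij}=\sum_l B_{li}B_{lj}\ge 0$, so $\cpp{k}\subseteq\psd{k}\cap\nn{k}$.

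Next I reduce the two identities to one. Under the trace inner product both $\psd{k}$ and $\nn{k}$ are self-dual (for $\nn{k}$ this follows by testing against the generators $E_{ii}$ and $E_{ij}+E_{ji}$), and $\cp{k},\cpp{k}$ form a dual pair: since the extreme rays of $\cpp{k}$ are the matrices $bb^\tp$ with $b\ge 0$ and $\langle A,bb^\tp\rangle=b^\tp Ab$, one reads off $(\cpp{k})^*=\cp{k}$ directly. Because $\nn{k}$ is polyhedral, the sum $\psd{k}+\nn{k}$ is a closed convex cone, so the rule $(K_1+K_2)^*=K_1^*\cap K_2^*$ from \cite[Corollary 16.3.2]{rockafellar1970convex} (already used in Proposition~\ref{proposition: koc}) gives $(\psd{k}+\nn{k})^*=\psd{k}\cap\nn{k}$. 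Consequently, once the single inclusion $\psd{k}\cap\nn{k}\subseteq\cpp{k}$ is known, taking duals yields $\cp{k}=(\cpp{k})^*=(\psd{k}\cap\nn{k})^*=\psd{k}+\nn{k}$, completing both identities. It therefore suffices to prove that every doubly nonnegative matrix of order $k\le 4$ is completely positive.

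For this hard inclusion I would normalize and then induct on the comparison graph $G(A)$, whose edges are the pairs $\{i,j\}$ with $a_{ij}\ne 0$. A congruence $A\mapsto DAD$ by a positive diagonal $D$ preserves membership in both $\psd{k}\cap\nn{k}$ and $\cpp{k}$ and does not change $G(A)$; a zero diagonal entry forces, by positive semidefiniteness, a zero row and column that may be deleted. So we may assume $A$ has unit diagonal, and the cases $k=1,2$ follow from the nonnegative factorization $\begin{bmatrix}1 & b\\ b & 1\end{bmatrix}=B^\tp B$ with $B\ge 0$, valid for $0\le b\le 1$. For $k=3,4$ I would induct on the number of edges of $G(A)$: when $G(A)$ is a forest one peels off a pendant edge by subtracting a nonnegative rank-one term supported on that edge, landing on a matrix of smaller support that is still doubly nonnegative; when $G(A)$ contains a cycle the decisive fact is that a graph on at most four vertices contains only triangles and $4$-cycles, hence no odd cycle of length $\ge 5$, and one subtracts a nonnegative rank-one matrix $vv^\tp$, with $v$ a suitable nonnegative boundary direction of a principal submatrix, that kills one edge while keeping the residual entrywise nonnegative and positive semidefinite.

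The main obstacle is exactly this rank-one peeling in the cyclic cases $k=3,4$: one must verify, across the finitely many graph types on four vertices, that a nonnegative $v$ can always be chosen to lower the edge count without destroying double nonnegativity. This is where the bound $k\le 4$ is indispensable—the result fails at $k=5$, where the Horn matrix is copositive but not of the form $\psd{5}+\nn{5}$ (equivalently, there is a $5\times 5$ doubly nonnegative matrix that is not completely positive), so any correct argument must use that $K_4$ contains no $5$-cycle.
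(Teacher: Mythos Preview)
The paper does not prove this lemma at all: it is stated with a citation to Maxfield--Minc \cite{maxfield1962matrix} and used as a black box in the proof of Theorem~\ref{thm: cpcpptohocp}. So there is no ``paper's own proof'' to compare against; your proposal stands or falls on its own.

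Your duality reduction is clean and correct. The easy inclusions $\psd{k}+\nn{k}\subseteq\cp{k}$ and $\cpp{k}\subseteq\psd{k}\cap\nn{k}$ hold in every dimension, and the observation that the two identities collapse to the single inclusion $\psd{k}\cap\nn{k}\subseteq\cpp{k}$ via self-duality of $\psd{k}$ and $\nn{k}$, the dual pair $(\cpp{k})^*=\cp{k}$, and closedness of $\psd{k}+\nn{k}$ (polyhedral plus closed) is a genuine simplification over arguing both directions separately. The $k=1,2$ cases are fine.

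The gap is exactly where you flag it: the rank-one peeling step for $k=3,4$ in the cyclic case is asserted but not carried out. Saying ``one subtracts a nonnegative rank-one matrix $vv^\tp$ \ldots\ that kills one edge while keeping the residual entrywise nonnegative and positive semidefinite'' is the whole difficulty, and it is not automatic. For instance, on a triangle with unit diagonal and off-diagonals $a,b,c\in(0,1]$, you must exhibit a nonnegative $v$ such that $A-vv^\tp$ remains doubly nonnegative with strictly fewer edges; the natural choice (a scaled column of $A$, or a null vector of a $2\times 2$ principal minor) does not obviously keep all three sign constraints, and the verification is a genuine case analysis that you have not supplied. The same issue recurs, with more cases, for the $4$-cycle and for $K_4$. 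Your structural observation that no odd cycle of length $\ge 5$ can appear on four vertices is the right reason the argument \emph{can} succeed, but it does not by itself produce the peeling vector. As written, the proposal is a correct outline with the central computation missing.
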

\begin{proof}[Proof of Theorem \ref{thm: cpcpptohocp}]
	We may suppose $i$-OCP with index set $J$ induced by $\cp{k}$ or $\cpp{k}$ are in its standard form \eqref{program:kocpstandard} where $i=2,3,4$. The case of inequality form is similar. 
	
	The constraint $X\in \cp{d}_i$ is the same as 
	\[X\in \cp{d}_i \iff X = \sum_{\{j_1,\dots,j_i\}\in J,j_1<\dots <j_i} \Aug{j_1\dots j_i}(M_{j_1\dots j_i}) \quad \text{and}\quad M_{j_1\dots j_i}\in \cp{i}.\]
	
	Since $M_{j_1\dots j_i}\in \cp{i}$ if and only if $M_{j_1\dots j_i} = S_{j_1\dots j_i} + N_{j_1\dots j_i}$ for some $S_{j_1\dots j_i} \in \psd{i}$, $N_{j_1\dots j_i}\in \nn{i}$ by Lemma \ref{lemma: cppcppsd} . We see $i$-OCP with index set $J$ induced by $\cp{k}$  can be casted into $i$OCP induced by $\psd{k}$ (the constraint $S_{j_1\dots j_i} \in \psd{i}$ can be casted into $(\psd{d})_k$ constraint by setting $d = k$ and the nonnegative constraint can be handled via $\diag(x)\in (\psd{d})_k\iff x \geq 0$).
	
	For $i$-OCP with index set $J$ induced by $\cpp{k}$. We note that 	\[X\in \cp{d}_i \iff X = \sum_{\{j_1,\dots,j_i\}\in J,j_1<\dots <j_i} \Aug{j_1\dots j_i}(M_{j_1\dots j_i}) \quad \text{and}\quad M_{j_1\dots j_i}\in \cpp{i}.\]
	
	Since $M_{j_1\dots j_i}\in \cpp{i}$ if and only if $M_{j_1\dots j_i} \in \psd{i}$ and $M_{j_1\dots j_i}\in \nn{i}$ by Lemma \ref{lemma: cppcppsd} . We see $i$-OCP with index set $J$ induced by $\cp{k}$  can be casted into $i$OCP induced by $\psd{k}$.
\end{proof}
By adjusting the set $J$ and an $U\in \mathbb{R}^{d \times d }$ , we may consider solving 
\[
\begin{aligned} 
&  \text{minimize} & & \tr(A_0X) \\
& \text{subject to}\;& & \tr(A_iX)=b_i,\quad i=1,\dots,p \\
& & & X\in U\cp{d}_k(J)U^\tp .
\end{aligned}
\]
and 
\[
\begin{aligned} 
&  \text{minimize} & & \tr(A_0X) \\
& \text{subject to}\;& & \tr(A_iX)=b_i,\quad i=1,\dots,p \\
& & & X\in U\cpp{d}_k(J)U^\tp .
\end{aligned}
\]
This formulation gives us more modeling power and can also be casted into $k$OCP induced by $\psd{k}$ for $k=2,3,4$.
\section{Symmetric cones}
\subsection{Positive semidefinite matrices in $\mathbb{R}^{d\times d},\mathbb{C}^{d\times d}, \mathbb{H}^{d \times d}$ and $\mathbb{O}^{d\times d}$}

Let us first recall the five irreducible symmetric cones\footnote{A cone is symmetric if it is self-dual and its autonomous group acts transitively on it. A symmetric cone is irreducible means it cannot be written as a Cartesian product of other symmetric cones}: 
\begin{enumerate}[\upshape (i)]
	\item Symmetric real positive semidefinite  matrices in $\mathbb{S}^{d}$
	\item Hermitian complex positive semidefinite  matrices in $\mathbb{C}^{d \times d}$
	\item Hermitian quaternion positive semidefinite matrices  in $\mathbb{H}^{d \times d}$
	\item Hermitian octonian positive semidefinite  $\mathbb{O}^{3\times 3}$ matrices
	\item Second order cone in $\mathbb{R}^{d+1}$: $\SOC^{d+1} = \{ (t,x)\mid \twonorm{x}\leq t, x\in \mathbb{R}^{d},t\in \mathbb{R}\}$.
\end{enumerate}

For the first three cones, they satisfy the \ep throughly as they are all of the form 
\[ \{A \in \mathbb{S}^d : x^*Ax \geq 0, \,\text{for all}\, x\in \mathbb{F}^{d}\},\] where $\mathbb{F} = \mathbb{R}, \mathbb{C}$ or $\mathbb{H}$.  

Let \[\Ho{d}_+ = \{A \in \mathbb{O}^{d \times d} : A= A^*, x^*Ax \geq 0, \,\text{for all}\, x\in \mathbb{O}^{d}\}.\] We may consider the series $\{\Ho{k}\}_{k=1}^\infty$ so that  the cone of Hermitian octonian positive semidefinite  $\mathbb{O}^{3\times 3}$ matrices is a member of it. The series satisfies the \ep throughly. 
\subsection{Second Order Cone} We need to first transform the second order cone into the space of symmetric matrices. This can be done through: 
\begin{equation}
\begin{aligned}\label{def: sock}
\{A : A = \diag(t,x), \, \text{for some}\,(t,x)\in \SOC^{k}\}
\end{aligned}
\end{equation}
We abuse the notation and call the above set as $\SOC^{k}$ as well. Moreover, we define $\SOC^1 = \mathbb{R}_+$. The index map for $\{\SOC^{k}\}_{k=1}^\infty$ is then \[I_{\SOC} (d,k)= \{s  : s = (1,i_1,\dots, i_{k-1}), 2\leq i_1<\dots <i_{k-1}\leq d\}\] for $k\geq 2$ and is simply $\{1\}$ if $k=1$. 
It can be easily  verified that $\{\SOC^{k}\}_{k=1}^\infty$ satisfies the \ep with index map $I(d,k)_{\SOC}$.

We can avoid lifting the second order cone to $d \times d$ matrices.  First, we define $\Trn{i_1 \cdots i_k} ^{\mathbb{R}^d}: \mathbb{R}^{d} \rightarrow \mathbb{R}^{k}$, $\Aug{i_1 \cdots i_k}^{\mathbb{R}^d}:\mathbb{R}^{k}\rightarrow \mathbb{R}^{d}$ for every $(i_1,\dots,i_k)\in{\comb{d}{k}}$ such that \[ \Trn{i_1 \cdots i_k} ^{\mathbb{R}^d}(x) = (x_{i_1},\dots,x_{i_k}), \quad [\Aug{i_1 \cdots i_k}^{\mathbb{R}^d}(y)]_{i}= \begin{cases}
y_j  & i = i_j \,\text{for some}\,i_j\\
0 & \text{otherwise}
\end{cases}.\]

The $k$th higher order cone of $\SOC^{d}$ is then 
\[
\SOC^{d}_{k} = \{ x\in \mathbb{R}^{d}  : x = \sum_{(1,i_1,\dots ,i_{k-1})\in {I}_{\SOC}(d,k)} \Aug{1i_1 \cdots i_{k-1}}(x_{1i_1\dots i_{k-1}}), x_{1i_1 \cdots i_{k-1}} \in \SOC^{k} \}.
\]

The following Lemma shows the nest inclusion of $\{\SOC^{d}_k\}_{k=1}^d$ is strict.

\begin{lemma}
	We have \[ \SOC^{d}_1\subsetneq \SOC^{d} _2\subsetneq\SOC^{d}_3\subsetneq \dots  \subsetneq \SOC^{d}_d = \SOC^{d}.\]
\end{lemma}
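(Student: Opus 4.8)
The plan is to reduce everything to a combinatorial covering estimate and then exhibit explicit separating vectors. First, the non-strict chain $\SOC^{d}_1\subseteq\cdots\subseteq\SOC^{d}_d$ follows exactly as in Proposition~\ref{proposition: koc}(i): the index map $I_{\SOC}$ meets the chain hypothesis there, since any $(1,i_1,\dots,i_{k-1})\in I_{\SOC}(d,k)$ with $k<d$ extends to some $(1,i_1,\dots,i_{k-1},j)\in I_{\SOC}(d,k+1)$, and $\Aug{1i_1\cdots i_{k-1}}^{\mathbb{R}^d}(\SOC^{k})=\Aug{1i_1\cdots i_{k-1}j}^{\mathbb{R}^d}(\SOC^{k}\times\{0\})\subseteq\Aug{1i_1\cdots i_{k-1}j}^{\mathbb{R}^d}(\SOC^{k+1})$ using $\SOC^{k}\times\{0\}\subseteq\SOC^{k+1}$; the top equality $\SOC^{d}_d=\SOC^{d}$ holds because $I_{\SOC}(d,d)$ is the single sequence $(1,2,\dots,d)$. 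So the actual work is to produce, for each $1\le k\le d-1$, a vector in $\SOC^{d}_{k+1}\setminus\SOC^{d}_k$ (we may assume $d\ge 2$, the case $d=1$ being vacuous).

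Next I would unwind the definition of $\SOC^{d}_k$. Writing each summand $x_{1i_1\cdots i_{k-1}}=(a_T,b^T)\in\SOC^{k}$ with $T=\{i_1,\dots,i_{k-1}\}\subseteq\{2,\dots,d\}$ (so $a_T\ge\twonorm{b^T}$ and $b^T\in\mathbb{R}^{k-1}$ is indexed by $T$), membership $x\in\SOC^{d}_k$ is equivalent to the existence of vectors $b^T$, one for each $(k-1)$-element subset $T\subseteq\{2,\dots,d\}$, with $b^T$ supported on $T$, such that $x_j=\sum_{T\ni j}b^T_j$ for $j=2,\dots,d$ and $\sum_T\twonorm{b^T}\le x_1$ (one recovers the $a_T$'s from the $b^T$'s by dumping all the slack into a single subset). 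Hence for the test vector $w_k\coloneqq(\tfrac{d-1}{\sqrt{k}},\mathbf{1}_{d-1})$, whose spatial part is the all-ones vector of length $d-1$, the question reduces to: what is the smallest value of $\sum_T\twonorm{b^T}$ over writings of $\mathbf{1}_{d-1}$ as $\sum_T b^T$ with $T$ ranging over $(k-1)$-subsets of $\{2,\dots,d\}$?

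I claim this minimum equals $\tfrac{d-1}{\sqrt{k-1}}$. For the upper bound I would use the fully symmetric decomposition of $\mathbf{1}_{d-1}$ over all $k$-element subsets $T\subseteq\{2,\dots,d\}$ (these are the index sets for $\SOC^{d}_{k+1}$): set $b^T_j=1/\binom{d-2}{k-1}$ for $j\in T$; then every coordinate of $\sum_T b^T$ is $1$, and the binomial identity $\binom{d-1}{k}/\binom{d-2}{k-1}=(d-1)/k$ gives $\sum_T\twonorm{b^T}=\tfrac{d-1}{k}\sqrt{k}=\tfrac{d-1}{\sqrt k}$, so $w_k\in\SOC^{d}_{k+1}$. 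For the lower bound — the step I expect to carry the weight — I would double count and apply Cauchy--Schwarz: for any admissible family $(b^T)$ of $(k-1)$-subset vectors,
\[
 d-1=\sum_{j=2}^{d}\sum_{T\ni j}b^T_j\le\sum_{j=2}^{d}\sum_{T\ni j}|b^T_j|=\sum_T\|b^T\|_1\le\sqrt{k-1}\sum_T\twonorm{b^T},
\]
whence $\sum_T\twonorm{b^T}\ge\tfrac{d-1}{\sqrt{k-1}}>\tfrac{d-1}{\sqrt{k}}$ for $k\ge 2$, so $w_k\notin\SOC^{d}_k$. This settles strictness for $2\le k\le d-1$; for $k=1$ one instead notes that $\SOC^{d}_1=\Aug{1}^{\mathbb{R}^d}(\mathbb{R}_+)$ is the ray $\{t e_1 : t\ge 0\}$, which excludes $w_1=(d-1,\mathbf{1}_{d-1})\in\SOC^{d}_2$.

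The only genuine obstacle is matching the symmetric upper bound with a lower bound valid for \emph{every} decomposition, not merely the symmetric one; the Cauchy--Schwarz double counting above does exactly that, and it is precisely what forces strictness. Everything else — the reduction of cone membership to the covering problem, the binomial identity, and the $k=1$ edge case — is routine.
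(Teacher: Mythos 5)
Your proof is correct, but it takes a genuinely different route from the paper. The paper argues entirely on the dual side: it writes down $(\SOC^{d}_k)^* = \{x : \Trn{1i_1\cdots i_{k-1}}^{\mathbb{R}^d}(x)\in\SOC^{k}\text{ for all indices}\}$, checks that $(\sqrt{k-1},\mathbf{1}_{d-1})$ lies in $(\SOC^{d}_k)^*$ but not in $(\SOC^{d}_{k+1})^*$ (its length-$k$ truncations have spatial norm $\sqrt{k}>\sqrt{k-1}$), and then transfers strictness back to the primal chain via biduality, invoking the Krein--Rutman theorem (with the interior point $(\sqrt{d+1},\mathbf{1}_{d-1})$) to certify $(\SOC^{d}_k)^{**}=\SOC^{d}_k$. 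You instead separate on the primal side: you reduce membership of $(t,\mathbf{1}_{d-1})$ in $\SOC^{d}_k$ to minimizing $\sum_T\twonorm{b^T}$ over decompositions $\mathbf{1}_{d-1}=\sum_T b^T$ with $(k-1)$-supported pieces, get the upper bound $\tfrac{d-1}{\sqrt{k}}$ for $k$-supports from the symmetric decomposition, and the matching lower bound $\tfrac{d-1}{\sqrt{k-1}}$ for $(k-1)$-supports from the double count $d-1\le\sum_T\|b^T\|_1\le\sqrt{k-1}\sum_T\twonorm{b^T}$. Both arguments are sound; yours buys a fully elementary, self-contained proof that never needs closedness of the finite sum of lifted cones or the Krein--Rutman biduality step, while the paper's dual-cone computation is shorter once that machinery is accepted and reuses the same certificate $(\sqrt{k-1},\mathbf{1}_{d-1})$ that appears in its $(\psd{d})_k$ lemma. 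Your combinatorial argument also yields the sharper quantitative statement that $(t,\mathbf{1}_{d-1})\in\SOC^{d}_k$ exactly when $t\ge\tfrac{d-1}{\sqrt{k-1}}$, which the paper's proof does not give. All the supporting steps you flag as routine (the nesting hypothesis for $I_{\SOC}$, the reduction of the $a_T$'s to the slack condition $\sum_T\twonorm{b^T}\le x_1$, the identity $\binom{d-1}{k}/\binom{d-2}{k-1}=\tfrac{d-1}{k}$, and the $k=1$ ray) check out.
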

\begin{proof}
	The inclusion follows easily from the Nested Cone property in Proposition \ref{proposition: koc}. We now prove the inclusion  is actually strict.
	First, we consider the dual cones \[(\SOC^{d}_k)^*= \{x\in \mathbb{R}^{d} : \Trn{1i_1 \cdots i_{k-1}} ^{\mathbb{R}^d}(x) \in \SOC^{k} \,\text{for all}\,(1,i_1,\dots ,i_{k-1})\in {I}_{\SOC}(d,k)\}.\] An application of first and second item of \ref{proposition: koc} tells us that  $$(\SOC^{d}_1)^*\supseteq (\SOC^{d} _2)^*\supseteq(\SOC^{d}_3)^*\supseteq \dots  (\supseteq \SOC^{d}_d )^*= (\SOC^{d})^*.$$
	Consider $(\sqrt{k-1},\mathbf{1}_{d-1})$, where $\mathbf{1}$ is a all one vector with length $d-1$. This vector belongs $(\SOC^{d}_k)^*$ but not $(\SOC^{d}_{k+1})^*$. Thus the inclusion in the dual cones is strict. 
	
	Since  $(\SOC^{d}_k)^* = \cap_{(1,i_1,\dots ,i_{k-1})\in {I}_{\SOC}(d,k)} K_{i_1 \cdots i_{k-1}}$ where $K_{i_1 \cdots i_{k-1}} = \{x\in \mathbb{R}^{d} : \Trn{1i_1 \cdots i_{k-1}} ^{\mathbb{R}^d}(x) \in \SOC^{k} \}$, and $(\sqrt{d+1}, \mathbf{1}_{d-1})\in \interior(K_{i_1 \cdots i_{k-1}})$, by the Krein-Rutman Theorem \cite[Corollary 3.3.13]{borwein2010convex}, we have
	\[(\SOC^{d}_k)^{**}= \sum_{(1,i_1,\dots ,i_{k-1})\in {I}_{\SOC}(d,k)}\Aug{1i_1 \cdots i_{k-1}}^{\mathbb{R}^{d}}(\SOC^{k}) =\SOC^{d}_k .\]
	Thus the strict inclusion in the dual cone implies that strict inclusion in $\{\SOC^d_k\}_{k=1}^d$. 
\end{proof}
\section{Norm Cones}
The \ep property is also satisfied by a large class of norm cones. Specifically, the property we need is the following. 
\begin{definition}\label{def: consistenceMonotonic} 
Suppose a norm $\|\cdot\|$ is defined on $\mathbb{S}^d$ (or $\diag(\mathbb{R}^d)$) for all $d$. For any $1\leq k\leq d$, $X\in \mathbb{S}^d$ (or $\diag(\mathbb{R}^d)$) and any $(i_1,\dots, i_k)\in {\comb{[d]}{k}}$, it is 
\begin{enumerate}[\upshape (i)]
	\item \emph{consistent} if $\|\Trn{i_1\cdots i_k}(X)\| = \| \Aug{i_1\cdots i_k}(\Trn{i_1\cdots i_k}(X))\|$;
	\item \emph{monotonic} if  $\|\Trn{i_1\cdots i_k}(X)\| \leq \|X\|$.
\end{enumerate}
\end{definition}

Norms satisfied the consistency and monotonicity are abundant, for example, 
\begin{enumerate}[\upshape (a)]
	\item All $\ell_p$ norms on $\mathbb{R}^d$: $\|x\|_p = (\sum_{i=1}^d|x_i|^p)^{\frac{1}{p}}$ for any $x\in \mathbb{R}^d,p\geq 1$. 
	\item All Schatten norm on $\mathbb{S}^d$ with underlying field being $\mathbb{R}$ or $\mathbb{C}$: $\|X \|_p = (\sum_{i=1}^d|\sigma_i(X)|^p)^{\frac{1}{p}}$ for all $X\in \mathbb{S}^d$ where $\sigma_i(X)$ is the $i$th  largest singular value of $X$. The monotonicity  is due to Cauchy's interlace theorem. 
	\item All Ky-Fan $k$ norm on $\mathbb{S}^d$ with underlying field being $\mathbb{R}$ or $\mathbb{C}$: $\|X \|_{\text{KF}_k}= \sum_{i=1}^k\sigma_i(X)$ for all $X\in \mathbb{S}^d$ and $\sigma_i =0$ for $i>d$. The monotonicity  is also due to Cauchy's interlace theorem. 
	\item The operator norm of matrix induced by $\ell_p$, $\ell_q$ vector norms: $\|A\|_{p,q} = \sup_{\|x\|_p=1} \|Ax\|_{q}$ for any $1\leq p,q\leq \infty$.
\end{enumerate}

 In fact, these two properties turns out to be the characterization of norms so that its corresponding norm cones having \ep as $\SOC$. This fact is shown by the following theorem.

\begin{theorem}[Characterization of Norm Cones satisfying \ep as $\SOC$]
For a norm $\|\cdot\|$ defined on $\mathbb{S}^d$ (or $\diag(\mathbb{R}^d)$ for all $d\geq 1$, let the norm cone in $\mathbb{S}^{d+1}$ (or $\diag(\mathbb{R}^{d+1})$ be
\[
N_{\|\cdot\|}^{d+1} =\{ \diag(t,X)\mid \|X\|\leq t, X\in \mathbb{L}^d,t \in \mathbb{R}\},
\]
and $N_{\|\cdot\|}^{1} = \mathbb{R}_+$ where $\mathbb{L}^d= \mathbb{S}^d$ or $\diag(\mathbb{R}^d)$. If the norm is consistent and monotonic, then the series of norm cones $\{N_{\|\cdot\|}^k\}_{k=1}^\infty$ satisfies the \ep with index map $I_{\SOC}(d,k)$. The converse is also true. 
\end{theorem}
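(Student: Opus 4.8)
The plan is to prove both directions by unwinding the definition of the \ep with index map $I_{\SOC}$. Recall that for the series $\{N_{\|\cdot\|}^k\}_{k=1}^\infty$ to satisfy the \ep with index map $I_{\SOC}(d,k)$, we need, for every $d\geq k\geq 1$ and every $s=(1,i_1,\dots,i_{k-1})\in I_{\SOC}(d,k)$, that $\Trn{s}^d$ maps $N_{\|\cdot\|}^d$ into $N_{\|\cdot\|}^k$ and that $\Aug{s}^d$ maps $N_{\|\cdot\|}^k$ into $N_{\|\cdot\|}^d$. The key observation is that, because every index tuple in $I_{\SOC}$ begins with the ``$t$-slot'' index $1$, both the truncation and the lift preserve the scalar $t$ and act by a coordinate restriction/zero-fill on the $X$-block. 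Concretely, if $Z=\diag(t,X)\in N_{\|\cdot\|}^d$, then $\Trn{1 i_1\cdots i_{k-1}}^d(Z)=\diag(t,\Trn{i_1\cdots i_{k-1}}^{d}(X))$ (truncation on the $\mathbb{L}^d$-part), and similarly the lift fills in zeros in the $X$-block while keeping $t$. So the containments we must check reduce to two scalar inequalities about the norm.

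For the forward direction (consistent and monotonic $\Rightarrow$ \ep): suppose $Z=\diag(t,X)\in N_{\|\cdot\|}^d$, so $\|X\|\le t$. By monotonicity, $\|\Trn{i_1\cdots i_{k-1}}^{}(X)\|\le\|X\|\le t$, hence $\Trn{1 i_1\cdots i_{k-1}}^d(Z)\in N_{\|\cdot\|}^k$. Conversely, suppose $W=\diag(t,Y)\in N_{\|\cdot\|}^k$ with $Y\in\mathbb{L}^{k-1}$, so $\|Y\|\le t$; the lift $\Aug{1 i_1\cdots i_{k-1}}^d(W)$ equals $\diag(t,\Aug{i_1\cdots i_{k-1}}^{}(Y))$, and by consistency $\|\Aug{i_1\cdots i_{k-1}}^{}(Y)\|=\|Y\|\le t$, so the lift lies in $N_{\|\cdot\|}^d$. (One should also dispatch the degenerate endpoints $k=1$, where $N_{\|\cdot\|}^1=\mathbb{R}_+$ and the claims are immediate, and the ``$t$-component must be preserved'' bookkeeping.) This establishes the \ep.

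For the converse (\ep $\Rightarrow$ consistent and monotonic): assume $\{N_{\|\cdot\|}^k\}$ satisfies the \ep with index map $I_{\SOC}$. Fix $X\in\mathbb{L}^d$ and an index tuple $(i_1,\dots,i_k)\in\comb{[d]}{k}$, and set $t=\|X\|$, so $\diag(t,X)\in N_{\|\cdot\|}^{d+1}$. Here I would reindex carefully: the norm in the statement is defined on $\mathbb{L}^d$ for all $d$, and in the cone $N_{\|\cdot\|}^{d+1}$ the first slot is the scalar $t$ and slots $2,\dots,d+1$ carry $X$; truncating along the tuple $(1,i_1+1,\dots,i_k+1)$ produces $\diag(t,\Trn{i_1\cdots i_k}(X))$, which by the \ep lies in $N_{\|\cdot\|}^{k+1}$, giving $\|\Trn{i_1\cdots i_k}(X)\|\le t=\|X\|$ — this is monotonicity. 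For consistency, apply the lift part of the \ep to $\diag(\|\Trn{i_1\cdots i_k}(X)\|,\Trn{i_1\cdots i_k}(X))\in N_{\|\cdot\|}^{k+1}$: its image $\diag(\|\Trn{i_1\cdots i_k}(X)\|,\Aug{i_1\cdots i_k}(\Trn{i_1\cdots i_k}(X)))$ lies in $N_{\|\cdot\|}^{d+1}$, which yields $\|\Aug{i_1\cdots i_k}(\Trn{i_1\cdots i_k}(X))\|\le\|\Trn{i_1\cdots i_k}(X)\|$; the reverse inequality follows from the monotonicity already proven applied to $\Aug{i_1\cdots i_k}(\Trn{i_1\cdots i_k}(X))$ together with the fact that $\Trn{i_1\cdots i_k}$ is a left inverse of $\Aug{i_1\cdots i_k}$, so $\|\Trn{i_1\cdots i_k}(X)\| = \|\Trn{i_1\cdots i_k}(\Aug{i_1\cdots i_k}(\Trn{i_1\cdots i_k}(X)))\| \le \|\Aug{i_1\cdots i_k}(\Trn{i_1\cdots i_k}(X))\|$. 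Hence equality, i.e.\ consistency.

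The only real subtlety — and the step I would be most careful about — is the index bookkeeping between the ``abstract norm on $\mathbb{L}^d$'' picture and the ``cone $N_{\|\cdot\|}^{d+1}$ with a distinguished $t$-slot'' picture: one must consistently shift indices by one and check that the tuples arising all lie in $I_{\SOC}$ (they do, precisely because they all start with the index $1$). Everything else is a direct translation of the cone membership $\diag(t,X)\in N_{\|\cdot\|}$ into the inequality $\|X\|\le t$ and back, so no hard analysis is involved; the content is entirely in recognizing that the two defining inequalities of the norm are exactly the two halves of the \ep.
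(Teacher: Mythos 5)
Your proposal is correct and follows essentially the same route as the paper: both directions reduce cone membership $\diag(t,X)\in N_{\|\cdot\|}$ to the inequality $\|X\|\le t$, with monotonicity matched to the truncation half of the \ep and consistency to the lift half. Your only deviation — deriving the reverse inequality $\|\Trn{i_1\cdots i_k}(X)\|\le\|\Aug{i_1\cdots i_k}(\Trn{i_1\cdots i_k}(X))\|$ from the already-proven monotonicity plus the left-inverse identity, rather than re-invoking the truncation half of the \ep on the lifted element as the paper does — is the same argument in different words.
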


\begin{proof} We prove the case of $\mathbb{S}^d$. The case of $\diag(\mathbb{R}^d)$ follows exactly the same line. 
	
	We first show that consistency with monotonicity implies that $N_{\|\cdot\|}^d$ satisfies the \ep with index map $I_{\SOC}$. 
	For any $1\leq k\leq d$, $(1,i_1,\dots ,i_{k-1})\in {I}_{\SOC}(d,k)$, $\diag(t,X)\in N_{\|\cdot\|}^{d+1}$, and $\diag(s,Z) \in N_{\|\cdot \|}^{k+1}$, the consistency implies that 
	\[\Aug{1i_1\cdots i_k}(\diag(s,Z)) \in N_{\|\cdot\|}^{d+1},\] since 
	\[\Aug{1i_1\cdots i_k}(\diag(s,Z)) = \diag(s, \Aug{i_1\cdots i_k}(Z)), \quad \text{and} \quad \|\Aug{i_1\cdots i_k}(Z)\| = \|Z\|\leq s.\]
	The monotonicity implies that \[\Trn{1i_1\cdots i_k} (\diag(t,X)) \in N_{\|\cdot\|}^{k+1},\] since 	\[\Trn{1i_1\cdots i_k}(\diag(t,X)) = \diag(t, \Trn{i_1\cdots i_k}(X)), \quad \text{and} \quad \|\Trn{i_1\cdots i_k}(X)\| \leq  \|X\|\leq t.\] The case $k=0$ is trivial. 

	Next we show the \ep of $\{N_{\|\cdot\|}^k\}_{k=1}^\infty$ implies its consistency and monotonicity. 
	Due to the \ep of $\{N_{\|\cdot\|}^{k}\}_{k=2}^d$, we have for any  $1\leq k\leq d$, $(1,i_1,\dots ,i_{k-1})\in {I}_{\SOC}(d,k)$, $Z\in \mathbb{S}^k$, 
	\[ \diag(\|Z\|,Z) \in N_{\|\cdot\|}^{k+1} \implies \Aug{1i_1\cdots i_k}(\diag(\|Z\|,Z)) \in N_{\|\cdot\|}^{d+1} \implies \|\Aug{1i_1\cdots i_k}(Z)\|\leq \|Z\|. \]
	Now consider
\[
\diag(\|\Aug{1i_1\cdots i_k}(Z)\|,\Aug{1i_1\cdots i_k}(Z)) \in N_{\|\cdot\|}^{d+1} \implies \Trn{1i_1\cdots i_k}(\diag(\|\Aug{1i_1\cdots i_k}(Z)\|,\Aug{1i_1\cdots i_k}(Z)))\in  N_{\|\cdot\|}^{k+1}.
\]
But $\Trn{1i_1\cdots i_k}(\diag(\|\Aug{1i_1\cdots i_k}(Z)\|,\Aug{1i_1\cdots i_k}(Z))) = \diag(\|\Aug{1i_1\cdots i_k}(Z)\|,Z)$ and we have \[\|\Aug{1i_1\cdots i_k}(Z)\|\geq \|Z\|.\]
	This shows the consistency by taking $Z = \Trn{i_1\cdots i_k}(X)$.
	
	To prove monotonicity, we have for any $X\in \mathbb{S}^d$, \[\Trn{1i_1\cdots i_k} (\diag(t,X)) \in N_{\|\cdot\|}^{k+1} \implies  \|\Trn{i_1\cdots i_k}(X)\| \leq t\] and taking $t= \|X\|$ shows the monotonicity.
\end{proof}

Thus the norm cones of our previous mentioned four kinds of norm (1) $\ell_p$ norm on $\mathbb{R}^d$, (2) Schatten norm on $\mathbb{S}^d$ with underlying field being $\mathbb{R}$ or $\mathbb{C}$,  (3) Ky-Fan $k$ norm with underlying field being $\mathbb{R}$ or $\mathbb{C}$, and (4) operator norm induced by $p,q$ norms all satisfies the \ep with index map $I_{\SOC}$. This means our previous discussion on $\SOC$ is just a special case of norm cones with \ep. 

Here we give two more concrete examples of norm with consistency and monotonicity and studies its $k$th order cone. Let us first consider the $\ell_1$ norm in $\mathbb{R}^d$. As in the case of the second order cone, we don't need to lift the space to matrices. The second order cone induced by $N_{\ell_1}^{d+1}$ is 
\[
(N_{\ell_1}^{d+1})_2 = \{ (t,x) \mid (t,x) = (\sum_{i=1}^d t_1+\cdots t_d, x_1,\dots,x_d), \,\text{for all}\,i, \, t_i\geq |x_i|, \,t_i,x_i \in \mathbb{R}\}.
\]
which is simply $N_{\ell_1}^{d+1}$! Thus by first item of Proposition \ref{proposition: koc}, we know the $k$th order cone induced by $N_{\ell_1}^{d+1}$ is just itself $N_{\ell_1}^{d+1}$ for $k\geq 2$. We don't gain new cones from this construction except the trivial cone $(N_{\ell_1})^{d+1} = \mathbb{R}_+\times \{0_{d}\}$ where $0_d$ is the zero vector of length $d$. Note that this is not the case for the second order cone. 

Next we consider the nuclear norm $\|\cdot\|_*$: 
\[ \|X\|_* = \sum_{i=1}^d \sigma_i(X), \quad \text{for all}\quad X\in \mathbb{S}^d\] with underlying field being real or complex. The $(k+1)$th order cone induced by $N_{\|\cdot\|_*}^{d+1}$ is 
\[ (N_{\|\cdot\|_*}^{d+1})_{k+1} = \{\sum_{(1,i_1,\dots ,i_{k-1})\in {I}_{\SOC}(d,k)} \Aug{1i_1\cdots i_k}(\diag(t_{i_1\cdots i_k},X_{i_1\cdots i_k})) \mid \diag(t_{i_1\cdots i_k},X_{i_1\cdots i_k}) \in N_{\|\cdot \|_*}^{k+1}\}.\]
Since $(N_{\|\cdot\|_*}^{d+1})^* = N_{\|\cdot\|_{2}}^{d+1}$ where $\|\cdot\|_{2}$ is the operator two norm, we know from second item of Proposition \ref{proposition: koc}, the dual cone of 
		$(N_{\|\cdot\|_*}^{d+1})_{k+1}$ is 
		\[ ((N_{\|\cdot\|_*}^{d+1})_{k+1})^* = \{\diag(t,X) \mid \Trn{1i_1\cdots i_k} (\diag(t,X))\in N_{\|\cdot \|_2}^{k+1}, \,\text{for all} \, (1,i_1,\dots ,i_{k-1})\in {I}_{\SOC}(d,k)\}.\] 
Moreover, by an application of first and second items of Proposition \ref{proposition: koc}, we have \[ ((N_{\|\cdot\|_*}^{d+1})_{d+1})^*\subseteq ((N_{\|\cdot\|_*}^{d+1})_{d})^* \subseteq \cdots \subseteq  ((N_{\|\cdot\|_*}^{d+1})_{2})^*\subseteq  ((N_{\|\cdot\|_*}^{d+1})_{1})^*. \] 
By considering $\diag(k,I_d  + \mathbf{1}\mathbf{1}^\top)$ with $k=1,2,\dots, d$ where $I_d$ is the identity matrix in $\mathbb{S}^d$ and $\mathbf{1}$ is the all one vector, we find that \[ ((N_{\|\cdot\|_*}^{d+1})_{d+1})^*\subsetneq ((N_{\|\cdot\|_*}^{d+1})_{d})^* \subsetneq \cdots \subsetneq  ((N_{\|\cdot\|_*}^{d+1})_{2})^*\subsetneq  ((N_{\|\cdot\|_*}^{d+1})_{1})^*. \] Since $\diag(d+1,I_d) \in \interior( K_{i_1 \cdots i_{k}} )= \interior(\{\diag(t,X): \Trn{1i_1 \cdots i_{k}} (\diag(t,X) )\in N_{\|\cdot\|_{2}}^{k+1}\})$ and $((N_{\|\cdot\|_*}^{d+1})_{k+1})^* = \cap_{(1,i_1,\dots ,i_{k-1})\in {I}_{\SOC}(d,k) }K_{i_1 \cdots i_{k}} $, by the Krein-Rutman Theorem \cite[Corollary 3.3.13]{borwein2010convex}, we find that
		\[ (N_{\|\cdot\|_*}^{d+1})_{d+1}\supsetneq (N_{\|\cdot\|_*}^{d+1})_{d}\supsetneq \cdots \supsetneq  (N_{\|\cdot\|_*}^{d+1})_{2} \supsetneq  (N_{\|\cdot\|_*}^{d+1})_{1}. \] Thus, unlike the case of $\ell_1$ norm cone, we indeed obtain new cones here.

Finally, the $k$th order cone program induced by $(N_{\|\cdot\|}^{d+1})_{k}$ for monotonic and consistent norm $\|\cdot\|$ is 
\begin{equation*}
\begin{aligned} 
&  \text{minimize} & & \tr(A_0X) +a_0t \\
& \text{subject to}\;& & \tr(A_iX)+a_it=b_i,\quad i=1,\dots,p \\
& & & \diag(t,X)\in  (N_{\|\cdot\|}^{d+1})_k,
\end{aligned}
\end{equation*}
where $A_i\in \mathbb{S}^d$, $a_i,\,b_i\in \mathbb{R}$, for all $i=0,\dots, p$.

\section{KKT Condition and Self-Concordance}
\subsection{KKT Condition}
Here we list the KKT condition for our higher order cone program. The primal form of our $\cone{d}_k(J)$ program is 
\begin{equation}
\begin{aligned} \label{Optprogram: primal higher order}
& \text{minimize}& & \tr(A_0X) \\
& \text{subject to}\;& & \tr(A_iX)=b_i ,\quad i =1,\dots,p \\
& & & X\in \cone{d}_k(J)
\end{aligned}
\end{equation}

The dual of the above program is 
\begin{equation}
\begin{aligned} \label{Optprogram: dual higher order}
& \text{minimize}& & b^\tp  y \\
& \text{subject to}\;& & A_0 - \sum_{i=1}^p y_i \in (\cone{d}_k(J))^*.\\
\end{aligned}
\end{equation}

Let  $\optx,\opty$ be a primal and dual solution pair of the above programs. Also let $\optz = A_0 - \sum_{i=1}^pA_i\opty_i$. If strong duality holds: 
\[ \tr(A_0 \optx) = b^\tp  \opty,\] we have the KKT condition as 
\begin{equation} \label{eq: kkt}
\begin{aligned} 
\optx& \in \cone{d}_k(J)\\
\tr(A_i\optx )& =b_i, i=1,\dots,p\\
\optz  &\in (\cone{d}_k(J))^*,\\ 
\tr (\optz \optx) & =0,\\
A_0- \sum_{i=1}^p A_i\opty_i& =\optz .
\end{aligned}
\end{equation}

\subsection{Self-Concordance}
We assume the original cone $\cone{k}$ and its $k$th order cone $\cone{d}_k$ are proper and the underlying field is $\mathbb{R}$. The index set is $I(d,k)$. The assumption of properness on $k$-th order  is true for all previous mentioned examples in $\mathbb{S}^{d}$.

Recall the definition of self-concordance and a few propositions of it.  
\begin{definition}
	Let $K$ be a convex closed cone. A continuous function $f: K\to \mathbb{R}\cup\{+\infty\}$ is called a barrier function of $K$ if it satisfies 
	
	\[f(x)<\infty \quad \text{for every } x \in \interior(K) \quad f(x)=+\infty \quad \text{for every }x \in \partial K, \]
	where $K^{\circ}$ means taking the interior of $K$ and $\partial K$ means the boundary of $K$ induced by the usual topology in $\mathbb{R}^n$.
	
	A convex third order differentiable function $f(x)$ on $K$ is self-concordant if for every $x\in K^{\circ}$ and $h\in \mathbb{R}^n$ the univariate function $\phi (\alpha) = f(x+\alpha h)$ satisfies the property 
	\[|\phi'''(0)|\leq 2 |\phi''(0)|^{\frac{3}{2}} \quad . \] 
	
	A barrier $f(x)$ of $K$ is  logarithmically homogeneous of degree $\theta$ if 
	\[f(tx)=f(x)-\theta \log(t). \]
\end{definition}

The following property is an easy consequence of the definition of self-concordance and can be found in section 9.6 in \cite{Boyd}. 

\begin{prop}\label{prop: selfconcordanceprop}
	If $f_1,f_2$ are  self-concordant functions on $K\subseteq \mathbb{R}^n$. then the following functions are also self-concordant.
	\begin{enumerate}[\upshape (i)]
		\item $af$, for all $a\geq 1$
		\item $f_1+f_2$
		\item $f(Ax+b)$ for all $A\in \mathbb{R}^{n\times m}, b \in \mathbb{R}^n$.
	\end{enumerate}
\end{prop}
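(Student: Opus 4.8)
The plan is to verify each of the three closure properties directly from the one-variable inequality $|\phi'''(0)| \le 2|\phi''(0)|^{3/2}$ in the definition, since in every case convexity, thrice differentiability, and (for (i) and (ii)) the barrier/logarithmic-homogeneity structure are inherited by routine bookkeeping; the only real content is tracking how the second and third derivatives of a one-dimensional slice transform. Fix throughout a point $x\in\interior(K)$ and a direction $h$, and write $\phi(\alpha)=f(x+\alpha h)$ for the corresponding slice of $f$.

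For (i), set $g=af$ with $a\ge 1$ and $\psi(\alpha)=g(x+\alpha h)=a\,\phi(\alpha)$. Then $\psi''(0)=a\phi''(0)$ and $\psi'''(0)=a\phi'''(0)$, so $|\psi'''(0)|=a|\phi'''(0)|\le 2a|\phi''(0)|^{3/2}\le 2a^{3/2}|\phi''(0)|^{3/2}=2|\psi''(0)|^{3/2}$, the middle step using $a\le a^{3/2}$ for $a\ge 1$. For (iii), a slice of $g(y)=f(Ay+b)$ is $\psi(\alpha)=f\bigl((Ay+b)+\alpha(Ah)\bigr)$, i.e.\ exactly the slice of $f$ through $x_0=Ay+b\in\interior(K)$ in direction $h_0=Ah$; thus $\psi''(0),\psi'''(0)$ coincide with the $\phi$-values for that slice and the inequality for $g$ is literally the inequality for $f$. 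Here one must read ``self-concordant'' as the slice inequality on the open convex domain $\{y:Ay+b\in\interior(K)\}$, which need not itself be a cone, so (iii) is only a statement about self-concordance and not about the barrier/homogeneity part; that caveat is, I think, implicit in the proposition and should just be noted.

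The one step carrying a little substance is (ii). With $g=f_1+f_2$ and $\psi=\phi_1+\phi_2$ along a slice, the triangle inequality together with self-concordance of each $f_i$ gives $|\psi'''(0)|\le 2|\phi_1''(0)|^{3/2}+2|\phi_2''(0)|^{3/2}$, while convexity forces both $\phi_i''(0)\ge 0$, so $|\psi''(0)|=\phi_1''(0)+\phi_2''(0)$. It therefore suffices to establish the elementary superadditivity bound $u^{3/2}+v^{3/2}\le(u+v)^{3/2}$ for $u,v\ge 0$, which follows by scaling to $u+v=1$ and using $u^{3/2}\le u$, $v^{3/2}\le v$ on $[0,1]$. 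I expect this trivial inequality to be the only non-bookkeeping ingredient in the entire argument; there is no serious obstacle, consistent with the proposition being quoted as an ``easy consequence'' of the definition of self-concordance.
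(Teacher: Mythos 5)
Your proof is correct: all three parts are verified exactly as they should be, and the only nontrivial ingredient, the superadditivity $u^{3/2}+v^{3/2}\le(u+v)^{3/2}$ for $u,v\ge 0$, is established cleanly. The paper itself gives no proof and simply cites Section 9.6 of Boyd--Vandenberghe; your argument is precisely the standard one found there, so there is nothing to reconcile.
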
 

The following theorem is adapted from Theorem~2.4.2, Theorem~2.4.4, and Proposition~2.4.1 in \cite{NN}. One can also found this in section 11.6 in \cite{Boyd}.
\begin{theorem}\label{sf}
	Let $K$ be a proper cone, i.e., $K$ is solid, convex, pointed and closed, in $\mathbb{R}^n$ and let $f$ be a $\theta$-logarithmically homogeneous self-concordant barrier for $K$. Then the Fenchel conjugate $f^*$ of $f$ is a $\theta$-logarithmically homogeneous self-concordant barrier for $-K^*$,i.e, the polar dual of $K$. Moreover, we have the interior of $-K^*$ to be  \[\interior(-K^*) =\{\nabla f(x): x\in \interior(K)\} ,\]
	and 
	\[
	f^*(x)+f(y)+ \theta\log (-x^\tp y)  \geq \theta \log (\theta )-\theta\]
	where the equality  holds for if and only if $x= t\nabla f(y)$ for some $t>0$.   
\end{theorem}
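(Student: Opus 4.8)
The plan is to reduce everything to the Legendre--Fenchel machinery together with the scaling identities forced by logarithmic homogeneity. Write $f^*(x)=\sup_{y\in\interior(K)}(\langle x,y\rangle-f(y))$, so that $-x^\tp y=-\langle x,y\rangle$. First I would record what log-homogeneity gives infinitesimally: differentiating $f(ty)=f(y)-\theta\log t$ in $t$ at $t=1$ yields $\langle\nabla f(y),y\rangle=-\theta$, and differentiating in $y$ gives $\nabla f(ty)=t^{-1}\nabla f(y)$ and then $\nabla^2 f(y)\,y=-\nabla f(y)$. Since $K$ is proper, a self-concordant barrier on it is nondegenerate, i.e. $\nabla^2 f(y)\succ 0$ on $\interior(K)$, so $f$ is strictly convex there and $\nabla f$ is injective. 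Next I would show $\nabla f(\interior(K))\subseteq\interior(-K^*)$: for $y,z\in\interior(K)$ the convex function $s\mapsto f(y+sz)$ on $[0,\infty)$ equals $f(z+y/s)-\theta\log s\to-\infty$ as $s\to\infty$, so its derivative is negative everywhere, and at $s=0$ this reads $\langle\nabla f(y),z\rangle<0$; passing to the closure gives $\langle\nabla f(y),z\rangle\le0$ for all $z\in K$ with strict inequality on $\interior(K)$, which is exactly $\nabla f(y)\in\interior(-K^*)$.

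Second, I would identify $\operatorname{dom}f^*$ with $\interior(-K^*)$, which is what makes $f^*$ a barrier for $-K^*$. For $x\in\interior(-K^*)$ the map $y\mapsto\langle x,y\rangle-f(y)$ is coercive on $\interior(K)$: it tends to $-\infty$ toward $\partial K$ because $f$ does while $\langle x,\cdot\rangle$ stays bounded there, and along any ray $y_0+sz$ with $z\in\interior(K)$ one has $f(y_0+sz)=f(z+y_0/s)-\theta\log s$, so $\langle x,y_0+sz\rangle-f(y_0+sz)\to-\infty$ since $\langle x,z\rangle<0$. Hence the supremum is attained at a unique interior point $y$ with $x=\nabla f(y)$. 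This simultaneously proves that $\nabla f:\interior(K)\to\interior(-K^*)$ is onto, hence a bijection (giving the stated formula $\interior(-K^*)=\{\nabla f(y):y\in\interior(K)\}$), and that $\nabla f^*=(\nabla f)^{-1}$, so $\nabla^2 f^*(x)=[\nabla^2 f(y)]^{-1}$ at $x=\nabla f(y)$. That $f^*=+\infty$ off $\interior(-K^*)$ follows because $f$ is essentially smooth and strictly convex, hence a Legendre function, and the conjugate of a Legendre function is Legendre with open domain equal to the gradient image (equivalently, one invokes the corresponding statement in \cite{NN}); the ``obvious'' ray estimate only disposes of points outside $-K^*$, so the boundary case genuinely needs this convex-analytic input.

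Third, log-homogeneity of $f^*$ comes from the substitution $y\mapsto y/t$: $f^*(tx)=\sup_y(\langle tx,y\rangle-f(y))=\sup_u(\langle x,u\rangle-f(u/t))=f^*(x)-\theta\log t$. The self-concordance of $f^*$ I would obtain from the general fact that the Legendre transform of a nondegenerate self-concordant function is self-concordant with the same constant: with $x=\nabla f(y)$ and $\phi(\alpha)=f^*(x+\alpha h)$, one changes variables to the $y$-side using $\nabla^2 f^*=(\nabla^2 f)^{-1}$ and the third-order chain rule, and the inequality $|\phi'''(0)|\le 2|\phi''(0)|^{3/2}$ for $f^*$ reduces to the one for $f$. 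This Legendre-transform estimate -- tracking the third derivative through the inverse Hessian -- is the technical heart and the main obstacle; I would carry it out explicitly as in \cite{NN} (using Proposition~\ref{prop: selfconcordanceprop} where convenient) or simply cite it.

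Finally, for the refined Fenchel--Young inequality, fix $x\in\interior(-K^*)$ and $y\in\interior(K)$, so $\langle x,y\rangle<0$. Log-homogeneity gives $f^*(x)=f^*(tx)+\theta\log t$ for every $t>0$, and combining this with the Fenchel--Young inequality $f^*(tx)+f(y)\ge\langle tx,y\rangle$ yields
\[
f^*(x)+f(y)\ \ge\ t\langle x,y\rangle+\theta\log t\qquad\text{for every } t>0 .
\]
Maximizing the right-hand side over $t>0$ -- the optimum is $t^{*}=\theta/(-\langle x,y\rangle)>0$ -- produces $f^*(x)+f(y)+\theta\log(-\langle x,y\rangle)\ge\theta\log\theta-\theta$, which is the claimed bound. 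Equality requires simultaneously the Fenchel--Young equality $t^{*}x=\nabla f(y)$ and the use of the optimal $t^{*}$; these combine into $x=t'\nabla f(y)$ with $t'=1/t^{*}>0$, and conversely that relation forces $t^{*}=1/t'$ and makes every step tight, using $\langle\nabla f(y),y\rangle=-\theta$. Assembling the four steps gives all assertions of the theorem.
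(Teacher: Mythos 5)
The paper does not actually prove this theorem: it is stated as an import, ``adapted from Theorem~2.4.2, Theorem~2.4.4, and Proposition~2.4.1 in \cite{NN}'' (see also \cite{Boyd}), so there is no in-paper argument to compare against. Your proposal is a legitimate and essentially standard reconstruction of the Nesterov--Nemirovskii argument, and its main lines are correct: the homogeneity identities $\langle\nabla f(y),y\rangle=-\theta$ and $\nabla^2 f(y)y=-\nabla f(y)$, the coercivity argument identifying $\operatorname{dom}f^*$ with $\interior(-K^*)$ and yielding surjectivity of $\nabla f$, the substitution proving log-homogeneity of $f^*$, and the optimization over the scaling parameter $t$ in Fenchel--Young (with optimum $t^{*}=\theta/(-\langle x,y\rangle)$ and the equality analysis via $\langle\nabla f(y),y\rangle=-\theta$) are all right. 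Two caveats. First, in your opening step the inference ``$\langle\nabla f(y),z\rangle<0$ for all $z\in\interior(K)$, which is exactly $\nabla f(y)\in\interior(-K^*)$'' is not correct as stated: strict negativity on $\interior(K)$ only gives $\nabla f(y)\in -K^*$, not membership in its interior (consider $x=(-1,0)$ against $K=\mathbb{R}_+^2$); interiority requires strict negativity on all of $K\setminus\{0\}$. This is harmless in your overall scheme because your second step shows $\nabla f(\interior(K))\supseteq\interior(-K^*)$, and since $\nabla f$ has nondegenerate Jacobian its image is open, hence contained in $\interior(-K^*)$ once it is contained in $-K^*$ --- but you should route the conclusion through that observation rather than claim it directly. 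Second, the two technically hardest points --- that the Legendre transform of a nondegenerate self-concordant function is self-concordant, and that $f^*=+\infty$ on $\partial(-K^*)$ --- are in your writeup still deferred to \cite{NN}; that is a reasonable division of labor and matches the paper's own level of rigor, but it means your proof is a guided reduction to the cited results rather than a fully self-contained alternative.
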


We prove the following theorem when a self-concordance function of the dual cones $(\cone{k})^*$ is available.
\begin{theorem}
	Let $g$ be a $\theta$-logarithmically  self-concordant barrier of the dual cone $(\cone{k})^*$. Also let $f(Y)=\sum_{(i_1,\dots, i_k)\in {I}(d,k)}g(\Trn{i_1 \cdots i_k}^d(Y)), Y\in \interior((\cone{d}_k)^*)$. Assuming $\nabla f\coloneqq x \mapsto \nabla f(x)$ is invertible, and $(\cone{d}_k)^*$ is a proper cone, the function $F(X)=-\tr(X(\nabla f)^{-1}(-X))-f((\nabla f)^{-1}(-X))$ is a $\theta \card(I(d,k))$-logarithmically self-concordant barrier for $\cone{d}_k$.
\end{theorem}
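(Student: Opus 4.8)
The plan is to recognize $F$ as the Fenchel conjugate $f^*$ composed with the reflection $X\mapsto -X$, and to deduce everything from Theorem~\ref{sf} once we know that $f$ is itself a $\theta\,\card(I(d,k))$-logarithmically homogeneous self-concordant barrier for $(\cone{d}_k)^*$. So first I would prove that statement about $f$. Writing $\Trn{s}^d$ for $\Trn{i_1\cdots i_k}^d$ with $s=(i_1,\dots,i_k)\in I(d,k)$, each $\Trn{s}^d$ is a linear map, so each summand $g\circ\Trn{s}^d$ is self-concordant by Proposition~\ref{prop: selfconcordanceprop}(iii); since $I(d,k)\subseteq\comb{[d]}{k}$ is finite, $f=\sum_{s\in I(d,k)}g\circ\Trn{s}^d$ is self-concordant by Proposition~\ref{prop: selfconcordanceprop}(ii). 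Logarithmic homogeneity of the stated degree is immediate: $f(tY)=\sum_{s}g(t\,\Trn{s}^d(Y))=\sum_{s}\bigl(g(\Trn{s}^d(Y))-\theta\log t\bigr)=f(Y)-\theta\,\card(I(d,k))\log t$. The barrier property uses the dual-cone formula of Proposition~\ref{proposition: koc}(ii), namely $(\cone{d}_k)^*=\bigcap_{s\in I(d,k)}(\Trn{s}^d)^{-1}((\cone{k})^*)$: since each $\Trn{s}^d$ is a surjective coordinate projection, hence an open map, and $(\cone{d}_k)^*$ is solid, a point $Y$ lies in $\interior((\cone{d}_k)^*)$ exactly when $\Trn{s}^d(Y)\in\interior((\cone{k})^*)$ for every $s$, where $f(Y)<\infty$; and for $Y\in\partial((\cone{d}_k)^*)$ all truncations still lie in $(\cone{k})^*$ while at least one lies on its boundary, so that summand is $+\infty$ and the rest are finite, giving $f(Y)=+\infty$.

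Next I would apply Theorem~\ref{sf} with $K=(\cone{d}_k)^*$ and the barrier $f$ just analyzed. Because $\cone{d}_k$ is proper, the bipolar theorem gives $K^*=((\cone{d}_k)^*)^*=\cone{d}_k$, so $f^*$ is a $\theta\,\card(I(d,k))$-logarithmically homogeneous self-concordant barrier for $-K^*=-\cone{d}_k$; the theorem also yields $\interior(-\cone{d}_k)=\{\nabla f(Y):Y\in\interior((\cone{d}_k)^*)\}$, and together with the hypothesis that $\nabla f$ is invertible this makes $(\nabla f)^{-1}$ a bijection from $\interior(-\cone{d}_k)$ onto $\interior((\cone{d}_k)^*)$.

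Then I would identify $F$ with $f^*\circ(-\operatorname{id})$. Fix $X\in\interior(\cone{d}_k)$, so $-X\in\interior(-\cone{d}_k)$ and, since $f=+\infty$ off $\interior((\cone{d}_k)^*)$, $f^*(-X)=\sup_{Y\in\interior((\cone{d}_k)^*)}\bigl(\langle -X,Y\rangle-f(Y)\bigr)$; the function being maximized is concave with gradient $-X-\nabla f(Y)$, which vanishes exactly at $Y_\star=(\nabla f)^{-1}(-X)$, so the supremum equals $\langle -X,Y_\star\rangle-f(Y_\star)=-\tr\bigl(X\,(\nabla f)^{-1}(-X)\bigr)-f\bigl((\nabla f)^{-1}(-X)\bigr)=F(X)$. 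Hence $F=f^*\circ(-\operatorname{id})$. By Proposition~\ref{prop: selfconcordanceprop}(iii) $F$ is self-concordant on $\cone{d}_k$; it is a barrier for $\cone{d}_k$ because $X\mapsto-X$ sends $\interior(\cone{d}_k)$ and $\partial\cone{d}_k$ onto $\interior(-\cone{d}_k)$ and $\partial(-\cone{d}_k)$, where $f^*$ is finite and $+\infty$ respectively; and $F(tX)=f^*(-tX)=f^*(-X)-\theta\,\card(I(d,k))\log t=F(X)-\theta\,\card(I(d,k))\log t$, giving logarithmic homogeneity of degree $\theta\,\card(I(d,k))$.

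The step I expect to be the main obstacle is the barrier verification for $f$ in the first paragraph: concretely, that a coordinate truncation of an interior point of $(\cone{d}_k)^*$ is again an interior point of $(\cone{k})^*$ — which needs the openness of the projections $\Trn{s}^d$ together with the solidity of $(\cone{d}_k)^*$ — and that no summand diverges to $-\infty$ as $Y$ reaches the boundary. Everything afterward is bookkeeping with the sign conventions of Theorem~\ref{sf} and the standard observation that, for a logarithmically homogeneous self-concordant barrier, the Fenchel supremum is attained at the unique critical point of the associated concave function.
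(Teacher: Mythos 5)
Your proposal is correct and follows essentially the same route as the paper: show that $f$ is a $\theta\,\card(I(d,k))$-logarithmically homogeneous self-concordant barrier for $(\cone{d}_k)^*$ (using Proposition~\ref{prop: selfconcordanceprop} for self-concordance and a direct computation for homogeneity), then invoke Theorem~\ref{sf} and identify $F$ with the Fenchel conjugate evaluated at $-X$ via the critical point $(\nabla f)^{-1}(-X)$. Your treatment of the interior/boundary characterization of $(\cone{d}_k)^*$ and of the sign conventions in the conjugacy step is somewhat more explicit than the paper's, but the underlying argument is the same.
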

\begin{proof}
	We first show $f$ is a $\theta \card({I(d,k)})$-logarithmically self-concordant barrier of the dual cone $(\cone{d}_k)^*$ where $\card({I(d,k)})$ is the cardinality of $I(d,k)$. 
	
	The barrier property follows from the fact that the boundary of $(\cone{d}_k)^*$ are those $Y$s such that some of $\Trn{i_1 \cdots i_k}^d(Y)$ are on the boundary of $(\cone{k})^*$. 
	
	 To verify that $f$ is self-concordant, we only need to show that for all $X$ in the interior of $(\cone{d}_k)^*$, $V\in \mathbb{S}^n$, $\phi(t)= f(X+tV) = \sum_{(i_1,\dots, i_k)\in {I}(d,k)}g(\Trn{i_1 \cdots i_k}^d(X+tV))$ is self-concordant.
	 
	  By Proposition \ref{prop: selfconcordanceprop}, it is enough to show $g(\Trn{i_1 \cdots i_k}^d(X+tV))$ is self-concordant. 
	Since $X$ is in the interior, we know $\Trn{i_1 \cdots i_k}^d(X+tV)$ is indeed in $\interior((\cone{k})^*)$ for all small $t$ and so $g(\Trn{i_1 \cdots i_k}^d(X+tV))$ is self-concordant as $g$ is. This proves $f$ is self-concordant on $(\cone{d}_k)^*$.
	
	 From the following computation,
	\begin{align*}
	f(tY) & =\sum_{(i_1,\dots, i_k)\in {I}(d,k)}g(\Trn{i_1 \cdots i_k}^d(tY))\\
	& \marka{=} \sum_{(i_1,\dots, i_k)\in {I}(d,k)}g(\Trn{i_1 \cdots i_k}^d(Y)) - \theta \log t \\
	& = \sum_{i(i_1,\dots, i_k)\in {I}(d,k)}g(\Trn{i_1 \cdots i_k}^d(Y))- \card(I(d,k))\theta  \log t,
	\end{align*}
	where (a) is because $g$ is $\theta$-logarithmically homogeneous. We see $f$ is indeed  logarithmically homogeneous of degree $\card(I(d,k))\theta$.
	
	By Theorem~\ref{sf}, we know that $f^*(X)$ is indeed a $\theta $ -logarithmically homogeneous self-concordant barrier for $-\cone{n}_k$. The Fenchel-Young's inequality asserts that \[f^*(X)+f(Y)  \geq \tr(XY)\] and this becomes equality if $X=\nabla f(Y)$.
	
	 Since $\nabla f$ is invertible from $\interior((\cone{d}_k)^*)$ to its image which from Theorem~\ref{sf} is just $\interior(-\cone{d}_k)= -\interior(\cone{d}_k)$,  $\nabla f$ is bijective from the interior of the dual cone to the interior of  $-\cone{d}_k$. Thus the notation $(\nabla f)^{-1}$ always makes sense. We have \[f^*(X) = \tr(X(\nabla f)^{-1}(X))-f((\nabla f)^{-1}(X))\] and so $F(X)$ is indeed a $\theta \card(I(d,k))$-logarithmically self-concordant barrier of $\cone{d}_k$.
	 \end{proof}

The condition $\nabla f\coloneqq x \mapsto \nabla f(x)$ is invertible is satisfied when $f$ has positive definite Hessian (see Lemma \ref{lemma: hessianinvertibelgrad} in Appendix). This is the case for $(\psd{d})_k$. 
\begin{lemma}
	 The function $f(x)=\sum_{(i_1,\dots, i_k)\in {\comb{[d]}{k}}}- \log (\det(\Trn{i_1 \cdots i_k}^d(Y)))$  is a $k{n\choose k}$ -logarithmically homogeneous self-concordant, strictly convex barrier on $((\psd{d})_k)^*$ and has positive definite Hessian on the interior of $((\psd{d})_k)^*$.
	 
\end{lemma}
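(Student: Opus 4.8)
The plan is to recognize $f$ as a finite sum of copies of the standard logarithmic determinant barrier, pulled back through the linear truncation maps $\Trn{i_1\cdots i_k}^d$, and then to run the self-concordance calculus of Proposition~\ref{prop: selfconcordanceprop} together with the dual-cone description from Proposition~\ref{proposition: koc}. First I would assemble the ingredients. By the ``Dual cones'' item of Proposition~\ref{proposition: koc}, $((\psd{d})_k)^* = \{Y\in\mathbb{S}^d : \Trn{i_1\cdots i_k}^d(Y)\in\psd{k}\ \text{for all}\ (i_1,\dots,i_k)\in\comb{[d]}{k}\}$, i.e. an intersection of the closed convex sets $K_{i_1\cdots i_k} = (\Trn{i_1\cdots i_k}^d)^{-1}(\psd{k})$. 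Since $I_d$ lies in the interior of every $K_{i_1\cdots i_k}$, the interior of the intersection is the intersection of the interiors, namely $\{Y : \Trn{i_1\cdots i_k}^d(Y)\succ 0\ \text{for all}\ (i_1,\dots,i_k)\}$, and a boundary point is one all of whose $k\times k$ principal submatrices are positive semidefinite with at least one singular. I would also recall the standard fact (e.g.\ from \cite{NN,Boyd}) that $g(X) = -\log\det X$ is a strictly convex, $k$-logarithmically-homogeneous self-concordant barrier for $\psd{k}$ with positive definite Hessian on $\interior(\psd{k})$.

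Next, writing $f(Y) = \sum_{(i_1,\dots,i_k)\in\comb{[d]}{k}} g(\Trn{i_1\cdots i_k}^d(Y))$, each summand is the composition of the self-concordant function $g$ with the linear map $\Trn{i_1\cdots i_k}^d$, hence self-concordant by Proposition~\ref{prop: selfconcordanceprop}(iii), and a finite sum of self-concordant functions is self-concordant by Proposition~\ref{prop: selfconcordanceprop}(ii); this gives self-concordance of $f$. The barrier property then follows from the interior/boundary description above together with continuity of $\det$: $f(Y)<\infty$ exactly on the interior, and $g(\Trn{i_1\cdots i_k}^d(Y))\to+\infty$ whenever $\det\Trn{i_1\cdots i_k}^d(Y)\to 0^+$, which happens as $Y$ approaches the boundary. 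For the homogeneity degree, since $\Trn{i_1\cdots i_k}^d$ is linear with $k\times k$ output, $\det\Trn{i_1\cdots i_k}^d(tY) = t^k\det\Trn{i_1\cdots i_k}^d(Y)$, so $g(\Trn{i_1\cdots i_k}^d(tY)) = g(\Trn{i_1\cdots i_k}^d(Y)) - k\log t$; summing over the $\card(\comb{[d]}{k}) = \comb{d}{k}$ index tuples yields $f(tY) = f(Y) - k\comb{d}{k}\log t$, i.e.\ logarithmic homogeneity of degree $k\comb{d}{k}$.

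Finally, for strict convexity and positive definiteness of the Hessian on the interior, the chain rule gives
\[
\nabla^2 f(Y)[V,V] = \sum_{(i_1,\dots,i_k)\in\comb{[d]}{k}} \nabla^2 g\bigl(\Trn{i_1\cdots i_k}^d(Y)\bigr)\bigl[\Trn{i_1\cdots i_k}^d(V),\Trn{i_1\cdots i_k}^d(V)\bigr] \ge 0,
\]
each term being nonnegative and strictly positive as soon as $\Trn{i_1\cdots i_k}^d(V)\neq 0$, because $\nabla^2 g$ is positive definite on $\interior(\psd{k})$. So it suffices that the stacked map $V\mapsto (\Trn{i_1\cdots i_k}^d(V))_{(i_1,\dots,i_k)\in\comb{[d]}{k}}$ be injective, and this is the step I expect to carry the real content. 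When $k=d$ it is trivial; when $2\le k<d$, every pair $\{p,q\}\subseteq[d]$ lies in some $k$-subset, so vanishing of all truncations of $V$ forces $v_{pq}=0$ for all $p,q$, hence $V=0$. (This is also exactly why $((\psd{d})_k)^*$ is pointed, and hence proper, for $k\ge 2$; for $k=1$ the dual cone fails to be pointed, so the statement should be read for $k\ge 2$.) Combining self-concordance, the barrier property, logarithmic homogeneity of degree $k\comb{d}{k}$, and the positive definiteness of $\nabla^2 f$ yields the claim.
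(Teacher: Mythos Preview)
Your proposal is correct and takes essentially the same approach as the paper: both reduce self-concordance, the barrier property, and logarithmic homogeneity to the standard facts about $-\log\det$ via Proposition~\ref{prop: selfconcordanceprop}, and both establish positive definiteness of the Hessian by examining the sum of quadratic forms in the truncations $\Trn{i_1\cdots i_k}^d(V)$. The paper writes out the Hessian explicitly as $\sum_{(i_1,\dots,i_k)} \tr\bigl((\Trn{i_1\cdots i_k}^d(Y)^{-1/2}\Trn{i_1\cdots i_k}^d(H)\Trn{i_1\cdots i_k}^d(Y)^{-1/2})^2\bigr)$ and asserts positivity for nonzero $H$, whereas you isolate the underlying reason---injectivity of the stacked truncation map---and correctly flag that $k\ge 2$ is needed, a point the paper leaves implicit.
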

\begin{proof}
	The cone  $((\psd{d})_k)^*$ can be easily verified to be proper. We only need to show the Hessian is positive definite as other parts are due to $-\log (\det (S))$ is self-concordant for $S\in \interior(\psd{k})$.

	Since first order approximation of $f$ is 
	\begin{align*}
	f(Y+\alpha H) & = \sum_{i_1 \cdots i_k} -\log (\det(\Trn{i_1 \cdots i_k}^d(Y+\alpha H)))\\
	& = \sum_{(i_1,\dots, i_k)\in I(d,k)} -\log (\det(\Trn{i_1 \cdots i_k}^d(Y))) - \alpha \tr (\Trn{i_1 \cdots i_k}^d(Y)^{-1} \Trn{i_1 \cdots i_k}^d(H))+O(\alpha^2)
	\end{align*}
	
	The first order derivative is 
	\[f'(Y)= - \sum_{(i_1,\dots, i_k)\in I(d,k)} \Aug{i_1 \cdots i_k}^d (\Trn{i_1 \cdots i_k}^d(Y)^{-1}).\]
	
	Now if we approximate the derivative up to the first order, we have 
	\begin{align*}
	f'(Y+\alpha H) =  \sum_{(i_1,\dots, i_k)\in I(d,k)}- \Aug{i_1 \cdots i_k}^d (\Trn{i_1 \cdots i_k}^d(Y)^{-1}) + \alpha \Aug{i_1 \cdots i_k}^d (\Trn{i_1 \cdots i_k}^d(Y)^{-1}\Trn{i_1 \cdots i_k}^d(H)\Trn{i_1 \cdots i_k}^d(Y)^{-1}) +O(\alpha^2)\\
	\end{align*}
	
	Thus we see 
	\begin{align*}
	D^2f(X)[H,H] & = \tr(\sum_{(i_1,\dots, i_k)\in I(d,k)}\Aug{i_1 \cdots i_k}^d (\Trn{i_1 \cdots i_k}^d(Y)^{-1}\Trn{i_1 \cdots i_k}^d(H)\Trn{i_1 \cdots i_k}^d(Y)^{-1})H)\\
	& = \sum_{(i_1,\dots, i_k)\in I(d,k) }\tr( (\Trn{i_1 \cdots i_k}^d(Y)^{-1}\Trn{i_1 \cdots i_k}^d(H)\Trn{i_1 \cdots i_k}^d(Y)^{-1})\Trn{i_1 \cdots i_k}^d(H))\\
	& = \sum_{(i_1,\dots, i_k)\in I(d,k)}\tr( (\Trn{i_1 \cdots i_k}^d(Y)^{-\frac{1}{2}}\Trn{i_1 \cdots i_k}^d(H)\Trn{i_1 \cdots i_k}^d(Y)^{-\frac{1}{2}})^2)\\
	\end{align*} 
	where $D^2f[H,H]$ denotes the value of second differential of $f$ taken at $x$ along the direction $H,H$.
	The last term is greater than zero for non-zero $H$. This means that $f$ is strictly convex and its Hessian is positive definite.
\end{proof}

\section{Appendix} 

Here we prove a few results in the main text. We first prove a simple Lemma used in proving Theorem \ref{thm:equivalencesdfineq} which states the equivalence between standard form  and inequality form of $k$OCP, 

\begin{lemma}\label{lemma: geqzerokoc}. Suppose $\{\cone{k}\}_{k=1}^\infty$ satisfies the \ep thoroughly. 
	If $x\in \mathbb{F}^{d}$ and $d \geq k$ and $\cone{1}=\mathbb{R}_+$, then 
	\[ \diag(x) \in \cone{d}_k \iff x\geq 0,\] where $x\geq 0$ means each component of $x$ is greater or equal to $0$.
\end{lemma}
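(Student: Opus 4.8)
\emph{Proof proposal.} The plan is to prove the two implications separately, using nothing beyond the \ep of $\{\cone{k}\}_{k=1}^\infty$ together with the normalization $\cone{1}=\mathbb{R}_+$; in particular the \ep for the pair $(k,1)$ says exactly that $\Aug{p}^k$ maps $\cone{1}$ into $\cone{k}$ and $\Trn{p}^k$ maps $\cone{k}$ into $\cone{1}=\mathbb{R}_+$ for every $p\in\{1,\dots,k\}$, since the index map is all of $\comb{[k]}{1}$.

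For the direction $x\geq 0\implies\diag(x)\in\cone{d}_k$, I would first show that each rank-one diagonal matrix $x_i e_i e_i^\tp$ (with $x_i$ in slot $(i,i)$ and zeros elsewhere) already lies in $\cone{d}_k$. Since $d\geq k$, choose any $J=\{j_1<\dots<j_k\}\in\comb{[d]}{k}$ with $i\in J$, say $i=j_p$. As $x_i\geq 0$ we have $[x_i]\in\mathbb{R}_+=\cone{1}$, so by the \ep the lift $\Aug{p}^k([x_i])\in\cone{k}$, and $\Aug{j_1\cdots j_k}^d\bigl(\Aug{p}^k([x_i])\bigr)=x_i e_i e_i^\tp$. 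Taking the remaining index sets of $\comb{[d]}{k}$ to be the zero matrix of $\cone{k}$ exhibits $x_i e_i e_i^\tp$ as an element of $\cone{d}_k$. Finally $\diag(x)=\sum_{i=1}^d x_i e_i e_i^\tp$, and $\cone{d}_k$ is a convex cone, hence closed under finite sums, so $\diag(x)\in\cone{d}_k$.

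For the direction $\diag(x)\in\cone{d}_k\implies x\geq 0$, I would start from a decomposition $\diag(x)=\sum_{J\in\comb{[d]}{k}}\Aug{J}^d(M_J)$ with each $M_J\in\cone{k}$ and read off the $(i,i)$ entry: $x_i=\sum_{J\ni i}(M_J)_{p_J(i)\,p_J(i)}$, where $p_J(i)$ denotes the position of $i$ inside $J$. It then suffices that every diagonal entry of a matrix in $\cone{k}$ is nonnegative, which is immediate from the \ep: $\Trn{p}^k(M_J)=[(M_J)_{pp}]\in\cone{1}=\mathbb{R}_+$. Summing nonnegative reals yields $x_i\geq 0$ for each $i$.

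I do not expect a genuine obstacle here: the statement is a bookkeeping consequence of the \ep once one observes that $\cone{1}=\mathbb{R}_+$ forces diagonal entries of cone elements to be nonnegative and, conversely, lets one assemble diagonal cone elements one coordinate at a time. The only point needing care is the reindexing when a $k$-subset $J$ is selected to contain a prescribed coordinate $i$ (tracking the position $p_J(i)$, and in the first direction filling the unused index sets with the zero matrix); this is routine but is where a careless write-up could slip.
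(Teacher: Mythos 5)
Your proof is correct and follows essentially the same route as the paper's: the forward direction reads off nonnegativity of the diagonal entries of each $M_J\in\cone{k}$ via truncation to $\cone{1}=\mathbb{R}_+$, and the reverse direction writes $\diag(x)$ as a sum of single-entry diagonal matrices, each lifted into $\cone{d}_k$ through the \ep. You simply spell out the reindexing and the two-step lift that the paper leaves implicit.
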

\begin{proof}
	If $\diag(x) \in \cone{d}_k$, then 
\[\diag(x) = \sum_{(i_1,\dots, i_k)\in \comb{[d]}{k}}\Aug{i_1 \cdots i_k}^d (M^{i_1 \cdots i_k}), \quad\text{and} \,M^{i_1 \cdots i_k} \in \cone{k}.\] The \ep and our assumption on $\cone{1}$ implies that the diagonal of $M^{i_1 \dots i_k}$ are nonnegative. Thus we have $x\geq 0$. 

Conversely, if $x\geq 0$, we can write 
\[\diag(x) = \sum_{i=1}^d \diag( x_i e_i)\] where $e_i$ is the $i$th standard vector in $\mathbb{F}^d$. Because of our assumption on $\cone{1}$ and the $\{\cone{k}\}_{k=1}^\infty$ satisfies the \ep thoroughly, each $\diag( x_i e_i) \in \cone{d}_k$ and this is a valid decomposition in $\cone{d}_k$. Thus $\diag(x)\in \cone{d}_k$.
\end{proof}

We note the assumption $\cone{1} = \mathbb{R}_+$ has no loss of generality since for nonempty one dimensional cone in $\mathbb{S}^1$, it is either $\mathbb{R}_-$ or $\mathbb{R}_+$.

The following Theorem includes Lemma \ref{lemma: sdd results} as a special case. See item (i) and (vi) of the theorem. The same result can also be found in \cite[Theorem 8,9]{boman2005factor} but we give a different proof. 

\begin{theorem}\label{thm5}
For a matrix $A =[a_{ij}]_{ij}$, denote $M(A)=[\alpha_{ij}]$ where $\alpha_{ii}=a_{ii} $ for all $i$ and $\alpha_{ij}=-|a_{ij}|$ for all $i \ne j$ and $\rho(A)=\max\{|\lambda|: \lambda\; \text{is an eigenvalue of}\; A \}$.  The following are all equivalent when $A \in \mathbb{S}^n$.
	\begin{enumerate}[\upshape (i)]
		\item $A\in \SDD{d}$ ;
		\item $M(A)\in \SDD{d}$; 
		\item there exists $D=\diag(d),d>0$, i.e., elementwise positive,  such that $D^\tp AD\in\DD{d}$;
		\item there exists a permutation matrix $P$ such that $P^\tp AP\in \SDD{d}$ ;
		\item $M(A)= sI-B$ for some $s$ and $B$ where $B$ is a non-negative matrix and $s$ is greater or equal to the largest absolute value of eigenvalue of $B$, i.e.,  $s\geq \rho(B)$;
		\item $M(A)$ is positive semi-definite.
	\end{enumerate} 
	
\end{theorem}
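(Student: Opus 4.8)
The plan is to split the six conditions into two clusters — the ``diagonal dominance'' statements (i)--(iv) and the ``positive semidefinite / M-matrix'' statements (v)--(vi) — each of which is easy to cycle through internally, with the genuine content lying in the bridge between the clusters. First I would record two trivial observations: $M(M(A)) = M(A)$, and membership in $\SDD{d}$ or $\DD{d}$ depends on $A$ only through the numbers $a_{ii}$ and $|a_{ij}|$. This makes (i)~$\Leftrightarrow$~(ii) immediate. For (i)~$\Leftrightarrow$~(iii), since $D=\diag(d)$ is symmetric, $D^\tp A D = DAD$ has entries $d_i d_j a_{ij}$, so the $\DD{d}$ inequality $d_i^2 a_{ii} \ge \sum_{j\ne i} d_i d_j |a_{ij}|$ becomes, after dividing by $d_i>0$, exactly the $\SDD{d}$ inequality for $A$ with scaling vector $d$. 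For (i)~$\Leftrightarrow$~(iv), a permutation similarity only relabels indices, and permuting the scaling vector the same way preserves the $\SDD{d}$ inequalities, so this equivalence is essentially vacuous.

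Next, (v)~$\Leftrightarrow$~(vi): write $M(A) = sI - B$ with $B := sI - M(A)$; its off-diagonal entries are $|a_{ij}|\ge 0$, so $B$ is entrywise nonnegative precisely when $s \ge \max_i a_{ii}$, and then $B$ is symmetric and nonnegative, so $\rho(B) = \lambda_{\max}(B)$. The eigenvalues of $M(A)$ are $s-\mu$, $\mu\in\operatorname{spec}(B)$, hence $M(A)\succeq 0 \Leftrightarrow s \ge \lambda_{\max}(B) = \rho(B)$; conversely any representation as in (v) forces every eigenvalue $s-\mu \ge s-\rho(B)\ge 0$. The easy half of the bridge is (ii)~$\Rightarrow$~(vi): applying (i)~$\Leftrightarrow$~(iii) to the matrix $M(A)$ gives $D=\diag(d)\succ 0$ with $DM(A)D \in \DD{d}$, a symmetric diagonally dominant matrix with nonnegative diagonal, which Gershgorin's theorem forces to have all eigenvalues in $[0,\infty)$; thus $DM(A)D\succeq 0$, and $M(A)\succeq 0$ by congruence.

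The hard direction, which I expect to be the main obstacle, is (vi)~$\Rightarrow$~(i): a symmetric positive semidefinite matrix with nonpositive off-diagonal entries is scaled diagonally dominant. The strategy is to produce $d>0$ with $M(A)d \ge 0$ entrywise, which is precisely the $\SDD{d}$ condition for $M(A)$ (equivalently for $A$). One first reduces to the case where the graph with an edge $\{i,j\}$ whenever $a_{ij}\ne 0$ is connected, i.e. $M(A)$ is irreducible; otherwise $M(A)$ is, after a permutation, block diagonal with each block again a symmetric PSD $Z$-matrix, and one concatenates the vectors built for the blocks. In the irreducible case put $s = \max_i a_{ii}$ and $B = sI - M(A) \ge 0$, also irreducible. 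If $M(A)$ is nonsingular then $\rho(B) < s$, so $M(A)^{-1} = (sI-B)^{-1} = s^{-1}\sum_{k\ge 0}(B/s)^k$ is entrywise positive by irreducibility, and $d := M(A)^{-1}\mathbf{1} > 0$ satisfies $M(A)d = \mathbf{1} > 0$; if $M(A)$ is singular then $\rho(B) = s$, and Perron--Frobenius for the irreducible nonnegative $B$ supplies a strictly positive eigenvector $d$ with $Bd = \rho(B)d = sd$, whence $M(A)d = (sI-B)d = 0 \ge 0$. Either way $d>0$ and $M(A)d\ge 0$, the claimed scaled diagonal dominance.

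Chaining (i)~$\Leftrightarrow$~(ii)~$\Leftrightarrow$~(iii)~$\Leftrightarrow$~(iv), (v)~$\Leftrightarrow$~(vi), and (ii)~$\Rightarrow$~(vi)~$\Rightarrow$~(i) closes all six equivalences. Apart from Gershgorin's theorem and elementary linear algebra, the only place where real theory enters is the singular/reducible case of (vi)~$\Rightarrow$~(i), where one needs the Perron--Frobenius eigenvector of the comparison matrix's M-splitting together with the block decomposition into irreducible components; that is the step I would write out most carefully, as everything else is bookkeeping with the definitions of $\SDD{d}$ and $\DD{d}$.
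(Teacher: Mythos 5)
Your proof is correct and follows essentially the same route as the paper: the trivial cycle (i)--(iv), the spectral argument for (v) $\Leftrightarrow$ (vi), Gershgorin plus congruence for (ii) $\Rightarrow$ (vi), and the irreducible block decomposition with Perron--Frobenius for the converse. The only substantive difference is that your singular/nonsingular case split in (vi) $\Rightarrow$ (i) is unnecessary: as in the paper, the Perron eigenvector $v_i>0$ of each irreducible block $B_i$ already gives $(sI-B_i)v_i=(s-\rho(B_i))v_i\ge 0$ in both cases, so the Neumann-series argument for the nonsingular case can be dropped.
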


\begin{proof}
We begin with the equivalence between (i)--(iv). It directly follows from the definition that (i) and (ii) are equivalent. By multiplying out $D^\tp  A D$ and examining row by row, one finds the condition $D^\tp  A D\in \DD{d}$ is the same as $A\in \SDD{d}$. Thus (iii) is equivalent to (i).  The equivalence between (i) and (iv) can also be easily verified from the definition.
		
Next we show that (v) and (vi) are equivalent. First (v) implies (vi) since for symmetric matrix, $\|B\|_2= \rho(B)$ and $\max_{\|v\|_2=1}v^\tp Bv =\rho(B)$,  $\min_{\|v\|_2=1}v^\tp M(A)v = \min_{\|v\|_2=1}s -   \max_{\|v\|_2=1}v^\tp Bv = s-\rho(B)\geq 0$. Also, (vi) implies (v): If $M(A)$ is positive semi-definite, then we know all its eigenvalues are non-negative and the largest eigenvalue is positive (the case $M(A)$ is a zero matrix is trivially true for the implication). Denote the eigenvalue of $M(A)$ to be $\lambda_1\geq \lambda_2\geq \dots \geq \lambda_n\geq 0$ (counting multiplicity), then $\lambda_1 I- M(A) \in \psd{d}$. Furthermore, since $B=\lambda_1 I- M(A)$ is positive semi-definite, the diagonal element is non-negative and so $B$ is non-negative. We also have $\rho(B)=\lambda_1-\lambda_n\leq \lambda_1$. This shows (vi) implies (v).
		
Lastly we deduce that (v)--(vi)  and (i)--(iv) are equivalent. Suppose $A\in \SDD{d}$ and so is $M(A)$, then by characterization (iii) and the fact that diagonally dominant matrix are positive semi-definite which follows from Gerschigorin circle theorem, we see $M(A)$ is positive semi-definite. This shows (i)--(iv) implies (v)--(vi). Conversely, suppose $M(A)=sI- B$ where $B$ is non-negative and $s\geq \rho(B)$. Since $B$ is symmetric, there always exists a permutation matrix $P$ such that 
\[
		P^\tp BP= \begin{bmatrix}
		B_1 & & &\\
		& B_2 & & \\
		& & \ddots &\\
		& & & B_k
		\end{bmatrix},
\]
and $B_i$ are all irreducible and square matrices and for all $i$, $\rho(B_i)\leq s$. Now by the the Perron--Frobenius theorem, we know for each $B_i$, there is an elementwise positive vector $v_i$ such that $B_iv_i =\rho(B_i)v_i$. Then if we multiply the vector $v=(v_1,\dots, v_k)$ on the right to $P^\tp M(A)P$, we have $P^\tp M(A)Pv = sv - (\rho(B_1)v_1,\dots,\rho(B_k)v_k)\geq 0$. This shows that $P^\tp M(A)P\in \SDD{d}$ is and so are $M(A)$ and  $A$.
\end{proof}

  \begin{lemma}\label{lemma: hessianinvertibelgrad}
	Suppose $f$ is a real valued second order differentiable function defined on a open convex cone $K\subseteq \mathbb{R}^n$. If $f$ has positive definite Hessian, then $\nabla f$ is an injection. 
\end{lemma}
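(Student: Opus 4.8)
The plan is to reduce the claim to the one‑dimensional statement that a twice differentiable function with everywhere positive second derivative has a strictly increasing first derivative, and then exploit convexity of $K$ to transport this along line segments. Concretely, suppose toward a contradiction that there are distinct points $x,y\in K$ with $\nabla f(x)=\nabla f(y)$, and set $h\coloneqq y-x\ne 0$.

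First I would use convexity of $K$ to guarantee that the segment $\{x+th: t\in[0,1]\}$ lies entirely in $K$, so that the function $\phi(t)\coloneqq f(x+th)$ is well defined on $[0,1]$. Since $f$ is second order differentiable on the open set $K$, the chain rule gives that $\phi$ is twice differentiable on $[0,1]$ with $\phi'(t)=\langle \nabla f(x+th),h\rangle$ and $\phi''(t)=h^{\tp}\,\nabla^2 f(x+th)\,h$. The hypothesis that the Hessian is positive definite, together with $h\ne 0$, then yields $\phi''(t)>0$ for every $t\in[0,1]$.

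Next, because $\phi'$ is differentiable on $(0,1)$ and continuous on $[0,1]$, the mean value theorem produces some $\xi\in(0,1)$ with $\phi'(1)-\phi'(0)=\phi''(\xi)>0$, hence $\phi'(0)<\phi'(1)$. On the other hand, $\phi'(0)=\langle\nabla f(x),h\rangle$ and $\phi'(1)=\langle\nabla f(y),h\rangle$, and these are equal by the assumption $\nabla f(x)=\nabla f(y)$. This contradiction shows no such pair $x\ne y$ exists, i.e., $\nabla f$ is injective. (Equivalently, one can package Steps 2--3 as the strict monotonicity inequality $\langle\nabla f(x)-\nabla f(y),\,x-y\rangle>0$ for $x\ne y$, which immediately rules out $\nabla f(x)=\nabla f(y)$.)

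There is no real obstacle here; the argument is routine. The only points requiring a modicum of care are (a) invoking convexity of $K$ to keep the whole segment inside the domain so that $\phi$ and its derivatives are defined, and (b) noting that mere pointwise twice differentiability of $f$ (rather than $C^2$ smoothness) already suffices, since the mean value theorem only needs $\phi'$ continuous on $[0,1]$ and differentiable on $(0,1)$, both of which follow from $\phi$ being twice differentiable.
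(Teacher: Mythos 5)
Your proof is correct and follows essentially the same route as the paper: both establish the strict monotonicity $\langle\nabla f(y)-\nabla f(x),\,y-x\rangle>0$ by restricting $f$ to the segment joining $x$ and $y$ and using positivity of $h^{\tp}\nabla^2 f(\cdot)h$. The only difference is cosmetic --- the paper writes this quantity as $\int_0^1 h^{\tp}\nabla^2 f(x+th)h\,dt$ while you invoke the mean value theorem for $\phi'$, which if anything is slightly more careful about the regularity actually assumed.
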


\begin{proof}
	For every $x\in K$ and $x+h\in K, h \ne 0$, we have 
	\[ h^\tp (\nabla f(x+h)-\nabla f(x))=\int_{0}^1 h^\tp  \nabla^2 f(x+th) h\, dt>0\] 
	as $\nabla ^2 f$, the Hessian, is positive definite. This means $f(x+h) \ne f(x)$ and $f$ is injective.
\end{proof}
\bibliographystyle{alpha}
\bibliography{reference}
\end{document}